\documentclass[11pt]{article}
\usepackage{amscd}
\usepackage{amsfonts}
\usepackage{amsmath}
\usepackage{amssymb}
\usepackage{amsthm}
\usepackage{bbm}
\usepackage{CJK}
\usepackage{fancyhdr}
\usepackage{graphicx}
\usepackage{indentfirst}
\usepackage{latexsym}
\usepackage{mathrsfs}
\usepackage{tikz-cd}
\usepackage{xcolor} 

\def\a{\alpha}
\def\b{\beta}
\def\r{\rho}

\def\k{\kappa}

\def\o{\otimes}

\allowdisplaybreaks[4]
\newtheorem{theorem}{Theorem}[section]
\newtheorem{lemma}[theorem]{Lemma}
\newtheorem{definition}[theorem]{Definition}
\newtheorem{proposition}[theorem]{Proposition}

\newtheorem{corollary}[theorem]{Corollary}
\newtheorem{remark}[theorem]{Remark}
\newtheorem{hypothesis}[theorem]{Hypothesis}

\usepackage[top=1in,bottom=1in,left=1.25in,right=1.25in]{geometry}
\date{}
\begin{document}
\renewcommand{\baselinestretch}{1.2}
\renewcommand{\arraystretch}{1.0}
\title{\bf  Hochschild cohomology and AS-Gorenstein property of weak Hopf Galois extensions}
\date{}
\author {{\bf Daowei Lu$^1$\footnote {Corresponding author:  ludaowei620@126.com}, Dingguo Wang$^{2,3}$}\\
{\small $^1$School of Mathematics and Big Data, Jining University}\\
{\small Qufu, Shandong 273155, P. R. China}\\
{\small $^2$Department of General Education, Shandong Xiehe University}\\
{\small Jinan, Shandong, 250109, P. R. China}\\
{\small $^3$School of Mathematical Sciences, Qufu Normal University}\\
{\small Qufu, Shandong 273165, P. R. China}
}
 \maketitle

\begin{center}
\begin{minipage}{12.cm}

\noindent{\bf Abstract.} Let $H$ be a weak Hopf algebra with bijective antipode. This paper is devoted to the AS-Gorenstein property of restricted faithfully flat weak $H$-Galois extensions $A/B$. We first of all study the cohomologies of weak $H$-Galois extensions and then establish the spectral sequence connecting the Hochschild cohomologies of $A$ and of $B$. Finally we prove that when $A$ is a noetherian affine PI algebra and $B$ is AS-Gorenstein, $A$ shares the AS-Gorenstein property.
\\

\noindent{\bf Keywords:} Weak Hopf Galois extension; Hochschild cohomology; Artin-Schelter Gorenstein algebra.
\\

 \noindent{\bf  Mathematics Subject Classification:} 16E65; 16E40; 16T05.
 \end{minipage}
 \end{center}
 \normalsize\vskip1cm
 
 \section*{Introduction}

Let $H$ be a Hopf algebra, and $A$ a right $H$-comodule algebra. Denote $B=A^{coH}$. Then we say $A/B$ is an $H$-Galois extension if there exists a bijection $\textup{can}:A\o_BA\rightarrow A\o H$ defined by $\textup{can}(x\o y)=xy_{(0)}\o y_{(1)}$. The Hopf Galois extension plays an essential role in the structures of Hopf algebras and has deep connections with crossed products, cleft extensions and the category of relative Hopf modules \cite{Mon}. It provides a unifying framework for the study of Galois extensions of fields and rings, strongly graded algebras and affine algebraic principal homogeneous spaces. The are numerous literatures on the research of Hopf Galois extension, see for example \cite{BH,D,DT, JS,Schau96,Schau97, Sch}.

D. Stefan \cite{Ste} studied Hochschild cohomology on Hopf Galois extensions and proved that there exists a spectral sequence $H^p(H,H^q(B,M))\Longrightarrow H^{p+q}(A,M)$, which connected the cohomologies of $A$ and of $B$. Resorting to this spectral sequence, Zhu \cite{Zhu} studied the AS-Gorenstein property of Hopf Galois extension and established that when $A$ is a noetherian affine PI algebra and $B$ is AS-Gorenstein, $A$ inherits the AS-Gorenstein property.
 
Weak Hopf algebras were first introduced by Böhm and Szlach$\acute{a}$nyi \cite{Bohm99} as an extension of both classical Hopf algebras and groupoid algebras. Structurally, a weak Hopf algebra consists of a vector space endowed with compatible algebra and coalgebra operations that satisfy a self-dual consistency condition, along with an antipode map. What distinguishes weak Hopf algebras from ordinary ones is that the comultiplication is not required to preserve the unit--equivalently, the counit need not be an algebra homomorphism. 
The significance of the research of weak Hopf algebras lies in its connection with the theory of algebra extensions and a natural framework for the study of dynamical twists in Hopf algebras. Moreover, from the viewpoint of category, any semisimple monoidal category with finitely many simple objects is equivalent to the representation category of a weak Hopf algebra \cite{Os}. 

Numerous fundamental properties inherent to ordinary Hopf algebras have been found to possess counterparts within the weak Hopf algebra framework. For instance, the integral theory for weak Hopf algebras, as formulated in \cite{Bohm99}, exhibits a structural parallelism with the classical theory. It is widely recognized that the homological integral, which generalizes the classical integral from finite-dimensional Hopf algebras to the infinite dimensional setting, has proven instrumental in the classification of infinite dimensional Hopf algebras with low Gelfand-Kirillov dimension \cite{LWZ}. More recently, this homological notion has been successfully extended to the weak Hopf algebra setting \cite{RWZ26}. Another significant advance in this area concerns weak Hopf Galois extensions $A/B$, first introduced in \cite{Caen2007}. This concept has subsequently been employed to derive a structure theorem for endomorphism algebras over weak Doi-Hopf modules \cite{NWZ}, as well as to explore the homological dimension relationship between the algebras $A$ and $B$ \cite{Zhou}.

Inspired by these results, in this paper we aim to construct spectral sequences of the cohomologies of weak Hopf Galois extension and investigate their AS-Gorenstein property, thereby extending the main results of \cite{Ste} and \cite{Zhu}.

The paper is organized as follows. Section 1 recalls the basic definitions and results concerning weak Hopf algebras and weak Hopf Galois extensions. In Section 2, we establish an isomorphism of Ext-groups that will serve as a key ingredient in the following derivation. Let $H$ be a weak Hopf algebra and $A/B$ a restricted flat weak $H$-Galois extension. In Section 3, we firstly define a natural $H$-action on $H^\bullet(B,M)$ for $A^e$-module $M$, and then prove the existence of spectral sequences of the cohomologies of weak Hopf Galois extension. Several applications of these spectral sequences are also provided. In Section 4, we turn our attention to the AS-Gorenstein property of weak Hopf Galois extensions, and prove that any restricted faithfully flat Hopf Galois extension of an AS-Gorenstein algebra remains AS-Gorenstein.

 \section{Preliminaries}
 \def\theequation{1.\arabic{equation}}
\setcounter{equation} {0}

Throughout this article, let $k$ be a fixed field, and all vector spaces and tensor product are taken over $k$ unless otherwise stated. For a coalgebra $C$, we will use the Heyneman-Sweedler's notation $\Delta(c)=  c_{1}\otimes c_{2},$ for any $c\in C$ (summation omitted).

Let $H$ be a linear space with the structures of an associative algebra $(H,m,1)$ and a coassociative coalgebra $(H,\Delta,\varepsilon)$. Recall from \cite{Bohm99}, $H$ is a weak Hopf algebra if it satisfies the following conditions
\begin{itemize}
  \item [(1)] The comultiplication $\Delta$ is a (not necessarily unit-preserving) homomorphism of algebras such that
  \begin{align*}
  &(\Delta\o id)\Delta(1)=(\Delta(1)\o1)(1\o\Delta(1)),\\
&(\Delta\o id)\Delta(1)=(1\o\Delta(1))(\Delta(1)\o1).
  \end{align*}
  \item [(2)] The counit $\varepsilon$ satisfies the identity
  \begin{equation*}
\varepsilon(xyz)=\varepsilon(xy_1)\varepsilon(y_2z)=\varepsilon(xy_2)\varepsilon(y_1z).
\end{equation*}
  \item [(3)] There exists an algebra and coalgebra anti-homomorphism $S:H\rightarrow H$, called an antipode, satisfying
\begin{align*}
& x_1S(x_2)=\varepsilon(1_1x)1_2,\\
&S(x_1)x_2=1_1\varepsilon(x 1_2),\\
& S(x_1)x_2S(x_3)=S(x),
\end{align*}
\end{itemize}
for all $x,y,z\in H$. 

For any weak Hopf algebra, define linear maps $\varepsilon_t,\varepsilon_s:H\rightarrow H$ by
$$\varepsilon_t(x)=\varepsilon(1_1x)1_2,\quad \varepsilon_s(x)=1_1\varepsilon(x 1_2),$$ 
where $\Delta(1)=\sum1_1\o1_2$. Denote the images of $\varepsilon_t$ and $\varepsilon_s$ by $H_t$ and $H_s$ respectively. According to \cite[Proposition 2.11]{Bohm99}, $H_t$ and $H_s$ are semisimple algebras. And $H_s$ is a right $H$-module with the action $y\cdot h=\varepsilon_s(yh)$ for $y\in H_s,h\in H$.

Now we list some identities used in this paper as follows:
\begin{align}
&\Delta(x)=1_1x\o1_2,\ \Delta(y)=1_1\o y1_2,\ \forall x\in H_t,y\in H_s,\label{1a}\\
&h_1\o\varepsilon_s(h_2)=h1_1 \o S(1_2),\ \varepsilon_t(h_1)\o h_2=S(1_1)\o 1_2 h,\ \forall h\in H,\label{1b}\\
&h_1y\o h_2=h_1\o h_2S(y),\ \forall h\in H,y\in H_s.\label{1c}
\end{align}

For left $H$-modules $M,N$, define
$$M\widehat{\o}N=\left\{\sum_i 1_1\cdot m_i\o 1_2\cdot n_i\mid \sum_i m_i\o n_i\in M\o N\right\}.$$
Then $M\widehat{\o}N$ is a left $H$-module with the action $h\cdot(m\widehat{\o}n)=h_1\cdot m\widehat{\o} h_2\cdot n$. Similarly for right $H$-modules $M,N$, define
$$M\overline{\o}N=\left\{\sum_im_i\cdot 1_1 \o n_i \cdot 1_2\mid \sum_i m_i\o n_i\in M\o N\right\}.$$
Then $M\overline{\o}N$ is a right $H$-module with the action $(m\overline{\o}n)\cdot h=m\cdot h_1\overline{\o}n\cdot h_2$. Following the arguments in \cite{RWZ}, we obtain that there are natural identifications $M\widehat{\o}N=M\o_{H_t}N$ as $H_t$-bimodules and $M\overline{\o}N=M\o_{H_s}N$ as $H_s$-bimodules.

Let $H$ be a weak Hopf algebra. An associative algebra $A$ is called a right $H$-comodule algebra if $A$ is a right $H$-comodule via $\r:A\rightarrow A\o H,\ a\mapsto a_{(0)}\o a_{(1)}$, and for all $a,b\in A$,
\begin{align}
&(\r\o id)\circ\r=(id\o\Delta)\circ\r,\label{1.2a}\\
&1_{(0)}a \o 1_{(1)}=a_{(0)}\o\varepsilon_t(a_{(1)}),\label{1.2b}\\
&\r(ab)=\r(a)\r(b),\label{1.2c}
\end{align}
where $\r(1)=\sum 1_{(0)}\o 1_{(1)}$. It is noticed that the identity (\ref{1.2b}) has an equivalent form
\begin{equation}
1_{(0)}\o1_{(1)1}\o1_{(1)2}=1_{(0)}\o1_11_{(1)}\o1_{2}.\label{1.2b'}
\end{equation}

If $A$ is a right $H$-comodule  algebra, the vector subspace of $A$
$$A^{coH}=\left\{a\in A|\r(a)=a_{(0)}\o\varepsilon_t(a_{(1)})\right\}$$
is called the $H$-coinvariants of $A$.

Let $A$ be an $H$-comodule algebra. A vector space $M$ is called an $(A, H)$-Hopf module if $M$ is a right $A$-module and a right $H$-comodule such that
$$(m\cdot a)_{(0)}\o (m\cdot a)_{(1)}=m_{(0)}\cdot a_{(0)}\o m_{(1)}a_{(1)},$$
for all $a\in A,m\in M$. The category of $(A,H)$-Hopf modules is denoted by $\mathcal{M}^H_A$, with the morphism being right $A$-linear and right $H$-colinear.

\begin{lemma}\cite{Zhang04}
Let $H$ be a weak Hopf algebra, $A$ a right $H$-comodule algebra, and $M$ an $(A,H)$-Hopf module. Then we have
\begin{itemize}
  \item [(1)] for all $m\in M,x\in H$,
  \begin{equation}
  m_{(0)}\varepsilon(m_{(1)}x)=m\cdot 1_{(0)}\varepsilon(1_{(1)}x).\label{1.2e}
  \end{equation}
  \item [(2)] $M^{coH}=\left\{m\in M|\rho(m)=m_{(0)}\o \varepsilon_t(m_{(1)})\right\}=\left\{m\in M|\rho(m)=m\cdot1_{(0)}\o 1_{(1)}\right\}$.
  \item [(3)] $A^{coH}=\left\{a\in A|\r(a)=a1_{(0)}\o1_{(1)}\right\}$.
\end{itemize}
\end{lemma}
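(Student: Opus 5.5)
The strategy is to prove (1) by a direct computation from the Hopf-module compatibility, and then obtain (2) and (3) as consequences of (1).

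\textbf{Part (1).} Fix $m\in M$ and $x\in H$. Since $\varepsilon$ is a counit for the coaction, we have $m=m\cdot 1_A=(m\cdot 1_A)_{(0)}\varepsilon((m\cdot 1_A)_{(1)})$. The compatibility condition for Hopf modules then gives
\[
m_{(0)}\varepsilon(m_{(1)}x)=(m\cdot 1)_{(0)}\varepsilon((m\cdot1)_{(1)}x)=m_{(0)}\cdot 1_{(0)}\,\varepsilon(m_{(1)}1_{(1)}x).
\]
Now apply the weak counit axiom (2) in the form $\varepsilon(yzw)=\varepsilon(yz_2)\varepsilon(z_1w)$ with $z=1_{(1)}$, and use the comodule algebra identity (\ref{1.2b'}) to rewrite $1_{(0)}\o 1_{(1)1}\o 1_{(1)2}$. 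This produces a term $\varepsilon(m_{(1)}1_2)$. Combining it with $m_{(0)}$ and using $\varepsilon(h1_2)1_1=\varepsilon_s(h)$ should collapse it to a factor of the form $m_{(0)}\varepsilon(m_{(1)}1_2)\cdots$. Coassociativity of $\rho_M$ together with the counit then returns $m$, leaving $m\cdot 1_{(0)}\varepsilon(1_{(1)}x)$.

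The delicate point is choosing the correct one of the two forms in axiom (2), $\varepsilon(xy_1)\varepsilon(y_2z)$ or $\varepsilon(xy_2)\varepsilon(y_1z)$, so that (\ref{1.2b'}) can be applied. I expect this to be the main obstacle. As an alternative, one can write $\varepsilon(m_{(1)}1_{(1)}x)=\varepsilon(m_{(1)}\varepsilon_t(1_{(1)})x)$; this holds because $\varepsilon(h\,\varepsilon_t(k))=\varepsilon(hk)$, a standard identity. After that, (\ref{1.2b}) in the form $1_{(0)}\o\varepsilon_t(1_{(1)})=1_{(0)}\o 1_{(1)}$ finishes the argument.

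\textbf{Part (2).} First, if $\rho(m)=m\cdot 1_{(0)}\o 1_{(1)}$, then applying $id\o\varepsilon_t$ and using (\ref{1.2b}) with $a=1$ gives $m_{(0)}\o\varepsilon_t(m_{(1)})=m\cdot 1_{(0)}\o \varepsilon_t(1_{(1)})=m\cdot1_{(0)}\o 1_{(1)}=\rho(m)$. So $m$ lies in the first set.

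Conversely, suppose $\rho(m)=m_{(0)}\o\varepsilon_t(m_{(1)})$. Expand $\varepsilon_t(m_{(1)})=\varepsilon(1_1m_{(1)})1_2$. The goal is $m_{(0)}\varepsilon(1_1m_{(1)})\o 1_2=m\cdot1_{(0)}\o1_{(1)}$. To get this, use (1) with $x$ replaced by a left factor: apply (1) in the equivalent "left" form $m_{(0)}\varepsilon(y m_{(1)})$ for $y\in H$. This form is obtained by the same argument, using $\varepsilon(y\,hx)$ with $h$ in the middle. Then use $\varepsilon(1_1 1_{(1)})1_2=\varepsilon_t(1_{(1)})=1_{(1)}$, again by (\ref{1.2b}).

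If the left-hand form of (1) is awkward to derive, I would instead apply (1) to $\varepsilon_t(m_{(1)})$ via (\ref{1b}). Concretely, from $\rho(m)=m_{(0)}\o\varepsilon_t(m_{(1)})$, apply $\rho\o id$ and coassociativity to get $m_{(0)}\o m_{(1)}\o\varepsilon_t(m_{(2)})$. Then (\ref{1b}) converts this to an expression with $m_{(1)}1_1\o S(1_2)$-type terms, which reduces via (1).

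\textbf{Part (3).} This is the special case $M=A$ of (2), since $A$ is an $(A,H)$-Hopf module via multiplication, by (\ref{1.2c}).
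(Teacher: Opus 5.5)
The paper quotes this lemma from Zhang's work without proof, so your argument has to stand on its own. As written it has two genuine gaps. The easy inclusion in (2), and the reduction of (3) to (2) via $M=A$, are fine.

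\textbf{Part (1).} None of your manipulations removes $m_{(1)}$. You start from $m_{(0)}\varepsilon(m_{(1)}x)=m_{(0)}\cdot1_{(0)}\varepsilon(m_{(1)}1_{(1)}x)$. Splitting with $\varepsilon(yz_2)\varepsilon(z_1w)$ and applying (\ref{1.2b'}) gives $m_{(0)}\cdot1_{(0)}\varepsilon(m_{(1)}1_2)\varepsilon(1_11_{(1)}x)$. Since $\varepsilon(h1_2)\varepsilon(1_1k)=\varepsilon(hk)$ (the counit axiom with middle factor $1$), this is exactly the expression you started from.

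Your alternative inserts $\varepsilon_t$ in front of $1_{(1)}$ and then removes it again by (\ref{1.2b}), which is a null step. The reason you cite for the insertion is also wrong once a factor $x$ sits on the right: in a groupoid algebra, $\varepsilon(\varepsilon_t(k)x)\neq\varepsilon(kx)$ in general.

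The missing idea is to use coassociativity (\ref{1.2a}) of $\rho_A$ at $1_A$, rather than (\ref{1.2b'}) or coassociativity of $\rho_M$. Then read the Hopf-module condition backwards with $a=1_{(0)}$. The first equality below uses the other splitting, $\varepsilon(yz_1)\varepsilon(z_2w)$:
\begin{align*}
m_{(0)}\varepsilon(m_{(1)}x)&=m_{(0)}\cdot1_{(0)}\,\varepsilon(m_{(1)}1_{(1)1})\,\varepsilon(1_{(1)2}x)=m_{(0)}\cdot1_{(0)(0)}\,\varepsilon(m_{(1)}1_{(0)(1)})\,\varepsilon(1_{(1)}x)\\
&=(m\cdot1_{(0)})_{(0)}\,\varepsilon\bigl((m\cdot1_{(0)})_{(1)}\bigr)\,\varepsilon(1_{(1)}x)=m\cdot1_{(0)}\,\varepsilon(1_{(1)}x).
\end{align*}
So the counit of $M$ returns $m\cdot1_{(0)}$, not $m$.

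\textbf{Part (2), inclusion of the first set in the second.} There is no ``left form'' of (1): in general $m_{(0)}\varepsilon(ym_{(1)})$ is not a right action of an element of $A$ on $m$. In fact the identity you would need, $m_{(0)}\varepsilon(1_1m_{(1)})\otimes1_2=m\cdot1_{(0)}\otimes1_{(1)}$ for all $m$, is false. Take a groupoid algebra $H=kG$ with $M=A=H$, $\rho=\Delta$ and $m=g$: the left side is $g\otimes\mathrm{id}_{t(g)}$ and the right side is $g\otimes\mathrm{id}_{s(g)}$. So the hypothesis on $m$ has to enter essentially. Your fallback via (\ref{1b}) also fails, since (\ref{1b}) rewrites $h_1\otimes\varepsilon_s(h_2)$, not $h_1\otimes\varepsilon_t(h_2)$.

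What works is the identity $z=\varepsilon(z1_1)1_2$ for $z\in H_t$. For $z=\varepsilon_t(h)$ it follows from $(\Delta\otimes id)\Delta(1)=(\Delta(1)\otimes1)(1\otimes\Delta(1))$. It turns the left pairing into a right pairing, where (1) applies. If $\rho(m)=m_{(0)}\otimes\varepsilon_t(m_{(1)})$, then
\begin{align*}
\rho(m)&=m_{(0)}\varepsilon\bigl(\varepsilon_t(m_{(1)})1_1\bigr)\otimes1_2=m_{(0)}\varepsilon(m_{(1)}1_1)\otimes1_2\\
&=m\cdot1_{(0)}\varepsilon(1_{(1)}1_1)\otimes1_2=m\cdot1_{(0)}\varepsilon\bigl(\varepsilon_t(1_{(1)})1_1\bigr)\otimes1_2=m\cdot1_{(0)}\otimes1_{(1)}.
\end{align*}
The first two equalities use the hypothesis and the $H_t$-identity. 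The third uses (1) with $x=1_1$. The last two use (\ref{1.2b}) with $a=1$ together with the $H_t$-identity.
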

 Therefore for $a\in A^{coH}$, we have $\r(a)=a1_{(0)}\o 1_{(1)}=1_{(0)}a\o 1_{(1)}$, and $A^{coH}$ is a subalgebra of $A$.

 Let $A$ be a right $H$-comodule algebra. Set 
 $$A\overline{\o} H=\left\{\sum _i a_i1_{(0)}\o h_i1_{(1)}|\sum _i a_i\o h_i\in A\o H\right\},$$
 which is a subspace of $A\o H$. 
 
Recall from \cite{Caen2007} that $A$ is a weak $H$-Galois extension of $B=A^{coH}$ if the canonical map can: $A\o _BA\rightarrow A\overline{\o}H$ given by 
 $$\textup{can}(a\o_B a')=aa'_{(0)}\overline{\o }a'_{(1)},\ a,a'\in A,$$
 is a bijection.
 
We also have a pair of adjoint functors $(F,G)$ defined by
$$F:\mathcal{M}_B\rightarrow\mathcal{M}^H_A,\ N\mapsto N\o_BA;\quad G:\mathcal{M}^H_A\rightarrow\mathcal{M}_B,\ M\mapsto M^{coH}.$$
 \begin{proposition}\cite[Proposition 2.3]{Caen2007}
Let $H$ be a weak bialgebra, and $A$ a right $H$-comodule algebra. Then the following assertions are equivalent:
\begin{itemize}
  \item [(1)] $A$ is a weak $H$-Galois extension of $B$ and faithfully flat as a left $B$-module;
  \item [(2)] $(F,G)$ is an equivalence and $A$ is flat as a left $B$-module.
\end{itemize}
 \end{proposition}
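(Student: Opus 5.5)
We prove the equivalence through the unit and counit of the adjunction $(F,G)$. The unit is $\eta_N:N\to (N\o_BA)^{coH}$, $n\mapsto n\o_B 1$. The counit is $\epsilon_M: M^{coH}\o_BA\to M$, $m\o_B a\mapsto m\cdot a$. First we check that $F$ lands in $\mathcal{M}^H_A$, using the coaction $n\o_B a\mapsto n\o_B a_{(0)}\o a_{(1)}$; this is well defined because every $b\in B$ satisfies $\r(ba)=b_{(0)}a_{(0)}\o b_{(1)}a_{(1)}=1_{(0)}ba_{(0)}\o 1_{(1)}a_{(1)}=ba_{(0)}\o a_{(1)}$. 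Next we check that $G$ lands in $\mathcal{M}_B$, using part (2) of the Zhang lemma. The triangle identities are then routine.

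For (1)$\Rightarrow$(2), flatness is part of the hypothesis, so it remains to show that $\eta$ and $\epsilon$ are isomorphisms.

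\emph{Unit.} The statement $(N\o_BA)^{coH}=N\o_B B$ is the equalizer of $\r$ and $a\mapsto a1_{(0)}\o 1_{(1)}$ from $A$ to $A\overline{\o}H$, tensored with $N$. By the Zhang lemma this equalizer is exactly $B$. Since $A$ is flat over $B$, the equalizer is preserved by $N\o_B-$. So it suffices to show that $0\to B\to A\rightrightarrows A\overline{\o}H$ becomes exact after applying $A\o_B-$. By faithful flatness, exactness after this base change is equivalent to exactness of the original sequence. Via $\textup{can}$, the sequence $A\o_B A\rightrightarrows A\o_BA\o_BA$ is the standard Amitsur complex, whose maps are $x\o y\mapsto x\o y\o 1$ and $x\o 1\o y$. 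This complex is split exact by the contraction given by multiplication.

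\emph{Counit.} We use the Galois map for $M$, $\textup{can}_M:M\o_BA\to M\overline{\o}H$. Faithful flatness reduces the bijectivity of $\epsilon_M$ to bijectivity after applying $-\o_BA$. There is an isomorphism $M\o_BA\cong M\overline{\o}H$ (with $\overline{\o}$ taken relative to $1_{(0)}\o1_{(1)}$), built from $\textup{can}^{-1}$, $(m,a)\mapsto m_{(0)}\cdot\textup{can}^{-1}(\cdots)$. Under this isomorphism, $M\o_BA$ becomes a Hopf module of the form $(M\overline{\o}H)$, and for such modules the coinvariants are computed explicitly using $(\ref{1b})$ and $(\ref{1.2e})$. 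This \textbf{is the main obstacle}: weak-unit factors $1_{(0)}\o 1_{(1)}$ must be tracked throughout, and the identities $(\ref{1.2b'})$ and $(\ref{1c})$ are needed to show that the relevant maps are well defined on $A\overline{\o}H$ rather than on $A\o H$.

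For (2)$\Rightarrow$(1), we argue as follows.
\begin{itemize}
\item \emph{Galois property.} Apply the counit to the Hopf module $M=A\overline{\o}H$, with coaction on $H$ and $A$-action diagonal. One computes that $(A\overline{\o}H)^{coH}\cong A$ via $a\mapsto a1_{(0)}\o 1_{(1)}$. The counit $A\o_BA\to A\overline{\o}H$ is then exactly $\textup{can}$, so $\textup{can}$ is bijective.
\item \emph{Faithful flatness.} Flatness is assumed. If $N\o_BA=0$, then $N\cong G F(N)=0$ because $\eta$ is an isomorphism; hence $A$ is faithfully flat.
\end{itemize}
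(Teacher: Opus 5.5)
The paper gives no proof of this statement; it quotes it from Caenepeel--De Groot. There it is obtained by viewing $\mathcal{M}^H_A$ as comodules over the $A$-coring $A\overline{\otimes}H$ (grouplike $1_{(0)}\otimes 1_{(1)}$) and applying the Galois-coring structure theorem. Your direction (2)$\Rightarrow$(1) is fine, but (1)$\Rightarrow$(2) has two genuine gaps.

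\emph{Unit.} The step ``since $A$ is flat over $B$, the equalizer is preserved by $N\otimes_B-$'' is a non sequitur. Flatness of ${}_BA$ makes $-\otimes_BA$ exact. It says nothing about $N\otimes_B-$ applied to $0\to B\to A\rightrightarrows A\overline{\otimes}H$ for an arbitrary right $B$-module $N$. That is a purity statement, and it is exactly where faithful flatness must enter.

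Your faithful-flatness step does not supply it, for two reasons:
\begin{itemize}
\item It only re-proves exactness of $0\to B\to A\rightrightarrows A\overline{\otimes}H$ itself, which already holds by the definition of $B$.
\item It tests exactness with $A\otimes_B-$, which would need $A$ to be faithfully flat as a \emph{right} $B$-module; that is not assumed.
\end{itemize}

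What you need is the descent statement: the sequence $0\to N\to N\otimes_BA\rightrightarrows N\otimes_BA\otimes_BA$, with maps $n\mapsto n\otimes 1$ and $n\otimes x\mapsto n\otimes x\otimes 1$, $n\otimes 1\otimes x$, is exact for every $N\in\mathcal{M}_B$. After applying $-\otimes_BA$ it is contractible via $n\otimes x\otimes a\mapsto n\otimes xa$ and $n\otimes x\otimes y\otimes a\mapsto n\otimes x\otimes ya$. Hence it is exact by faithful flatness of ${}_BA$. Then $N\otimes_B\textup{can}$ (which makes sense because $\textup{can}$ is left $A$-linear) carries it onto the equalizer defining $(N\otimes_BA)^{coH}$. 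For this last step, use that $A\overline{\otimes}H$ is a left $B$-direct summand of $A\otimes H$.

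\emph{Counit.} You stop at what you call ``the main obstacle''. You never say which Hopf-module structure is transported to $M\overline{\otimes}H$, nor how the coinvariants of such modules yield an inverse of $\epsilon_M$ (or of $\epsilon_M\otimes_BA$). So there is no proof that $\epsilon_M$ is bijective, and this is the heart of (1)$\Rightarrow$(2).

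One way to finish:
\begin{itemize}
\item Transport $\rho_M$ along $M\overline{\otimes}H\cong M\otimes_A(A\overline{\otimes}H)\cong M\otimes_BA$ (via $\textup{can}^{-1}$). This gives $\tilde\rho(m)=\sum m_{(0)}\kappa^1(m_{(1)})\otimes_B\kappa^2(m_{(1)})$, which is right $A$-linear because $\textup{can}$ is, and $M^{coH}=\{m\mid\tilde\rho(m)=m\otimes_B1\}$.
\item Flatness of ${}_BA$ identifies $M^{coH}\otimes_BA$ with the equalizer $E$ of $\tilde\rho\otimes_BA$ and $m\otimes a\mapsto m\otimes 1\otimes a$.
\item Coassociativity of $\rho_M$, together with (\ref{2c}) and (\ref{2e}), gives $\tilde\rho(M)\subseteq E$.
\item Let $\mu:M\otimes_BA\to M$ be multiplication. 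By (\ref{2d}), $\mu\circ\tilde\rho=\textup{id}_M$.
\item For $z\in E$, multiplying the last two tensor factors in the equalizer condition gives $\tilde\rho(\mu(z))=z$.
\end{itemize}
Hence $\mu|_E$ is bijective, and its composite with $M^{coH}\otimes_BA\cong E$ is $\epsilon_M$. This step uses only flatness and the Galois map, not the antipode.

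Your appeal to (\ref{1b}) and (\ref{1c}), by contrast, involves $S$. It would therefore cover only weak Hopf algebras, whereas the statement is for weak bialgebras.
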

 We say a weak Hopf Galois extension $A/B$ is flat if $A$ is flat both as a left and right $B$-module, and $A/B$ is faithfully flat if $A$ is faithfully flat as a left or right $B$-module. 

According to \cite{NWZ}, the {\it translation map} associated to the weak $H$-Galois extension $A/B$ is given as follows
$$\k: H\rightarrow A\o_BA,\quad h\mapsto \textup{can}^{-1}(1_{(0)}\o h1_{(1)}).$$
In what follows, for any $h\in H$ we use the notation
$$\k(h)=\sum\k^1(h)\o_B \k^2(h).$$
Thus $\textup{can}(\k(h))=\sum\k^1(h)\k^2(h)_{(0)}\o \k^2(h)_{(1)}=1_{(0)}\o h1_{(1)}$.

\begin{lemma}\cite{NWZ}\label{lem:2a}
For all $a\in A,g,h\in H$, we have the following identities
\begin{align}
&\sum\k^1(gh)\o_B \k^2(gh)=\sum\k^1(h)\k^1(g)\o_B \k^2(g)\k^2(h),\label{2a}\\
&\sum b\k^1(h)\o_B \k^2(h)=\sum \k^1(h)\o_B \k^2(h)b,\ b\in B,\label{2b}\\
&\sum\k^1(h)\o_B \k^2(h)_{(0)}\o \k^2(h)_{(1)}=\sum\k^1(h_1)\o_B \k^2(h_1)\o h_2,\label{2c}\\
&\sum\k^1(h)_{(0)}\o_B \k^2(h)\o \k^1(h)_{(1)}=\sum\k^1(h_2)\o_B \k^2(h_2)\o S(h_1),\\
&\sum \k^1(h)\k^2(h)=\varepsilon(h1_{(1)})1_{(0)},\label{2d}\\
&\sum a_{(0)}\k^1(a_{(1)})\o_B \k^2(a_{(1)})=1\o_B a.\label{2e}
\end{align}
\end{lemma}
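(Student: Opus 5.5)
The plan is to derive every identity in Lemma~\ref{lem:2a} from the injectivity of $\textup{can}$. For two elements of $A\o_BA$, I will apply $\textup{can}$ (or a comodule-twisted version of it) to both and check that the images in $A\overline{\o}H$ agree. The basic input is the defining equation
$$\textup{can}(\k(h))=\k^1(h)\k^2(h)_{(0)}\o\k^2(h)_{(1)}=1_{(0)}\o h1_{(1)},$$
together with the facts that $\textup{can}$ is left $A$-linear and right $H$-colinear in the second factor.

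I will take the identities in this order.
\begin{itemize}
\item[(\ref{2e})] Apply $\textup{can}$ to the left side. Left $A$-linearity gives $a_{(0)}1_{(0)}\o a_{(1)}1_{(1)}$, and by (\ref{1.2c}) this equals $a_{(0)}\o a_{(1)}=\textup{can}(1\o_Ba)$.
\item[(\ref{2d})] Apply $id\o\varepsilon$ to the defining equation, which gives $\k^1\k^2_{(0)}\varepsilon(\k^2_{(1)})=1_{(0)}\varepsilon(h1_{(1)})$. The left side is $\k^1\k^2$ by the counit property of the comodule.
\item[(\ref{2c})] Consider the map $\textup{can}\o id$ on $A\o_BA\o H$. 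Applied to the left side, it gives $\k^1\k^2_{(0)}\o\k^2_{(1)1}\o\k^2_{(1)2}=(id\o\Delta)(1_{(0)}\o h1_{(1)})$. Using (\ref{1.2b'}) and multiplicativity of $\Delta$, this becomes $1_{(0)}\o h_11_11_{(1)}\o h_21_2$. The right side maps to $1_{(0)}\o h_11_{(1)}\o h_2$, and the two agree after absorbing $\Delta(1)$ into $\Delta(h)$.
\item[(\ref{2b})] Apply $\textup{can}$ to both sides. The right side gives $\k^1\k^2_{(0)}b_{(0)}\o\k^2_{(1)}b_{(1)}$. Since $\r(b)=1_{(0)}b\o1_{(1)}$ for $b\in B$, this becomes $1_{(0)}b\o h1_{(1)}$. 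The left side gives $b1_{(0)}\o h1_{(1)}$. These agree because $\r(b)=b1_{(0)}\o1_{(1)}=1_{(0)}b\o1_{(1)}$ (Lemma (3)), with $h$ applied on the $H$-leg.
\item[(\ref{2a})] Compute $\textup{can}$ of the right side. The $B$-balanced element $\k^1(g)\o_B\k^2(g)$ sits in the middle, so I will use (\ref{2b}) and (\ref{2c}) to move the coaction through. The image is $\k^1(h)\k^1(g)\k^2(g)_{(0)}\k^2(h)_{(0)}\o\k^2(g)_{(1)}\k^2(h)_{(1)}$. Using $\textup{can}(\k(g))$, this collapses to $\k^1(h)1_{(0)}\k^2(h)_{(0)}\o g1_{(1)}\k^2(h)_{(1)}$, which by (\ref{1.2c}) equals $\k^1(h)\k^2(h)_{(0)}\o g\k^2(h)_{(1)}=1_{(0)}\o gh1_{(1)}$.
\end{itemize}

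The anti-colinearity identity for $\k^1(h)_{(0)}\o\k^2(h)\o\k^1(h)_{(1)}$ is the main obstacle, because $\textup{can}$ does not see the coaction on the first factor. I would check it through the bijection $\textup{can}$ combined with the coaction on the first leg: form
$$\Phi:A\o_BA\o H\to A\o H\o H,\qquad x\o y\o k\mapsto x_{(0)}y_{(0)}\o y_{(1)}\o x_{(1)}y_{(2)}.$$
The second $H$-leg of $\Phi$ is then the diagonal coaction. Applied to the left side, $\Phi$ yields $\k^1_{(0)}\k^2_{(0)}\o\k^2_{(1)}\o\k^1_{(1)}\k^2_{(2)}$, which should be rewritten as $(id\o\Delta)(\r\o id)(1_{(0)}\o h1_{(1)})$ with the legs suitably placed. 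Applied to the right side, $\Phi$ gives an expression in which $S(h_1)h_2$-type terms appear; these reduce via $S(x_1)x_2=\varepsilon_s(x)$ and the identities (\ref{1b}), (\ref{1c}). This verification also requires checking that the modified map is well defined and injective on the relevant subspace. That is where I expect the real work, since the counital projections $\varepsilon_s,\varepsilon_t$ must be matched carefully against the idempotent $\r(1)$.
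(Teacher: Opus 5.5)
The paper does not prove this lemma; it is quoted from the literature, so your proposal has to stand on its own. Your checks of (\ref{2a})--(\ref{2e}) are correct. Each is a direct verification after applying the bijection $\textup{can}$ or $\textup{can}\o id$, using left $A$-linearity of $\textup{can}$, $\textup{can}(zy)=\textup{can}(z)\r(y)$, $\r(b)=b1_{(0)}\o1_{(1)}=1_{(0)}b\o1_{(1)}$ for $b\in B$, and (\ref{1.2b'}). In (\ref{2a}), the role of (\ref{2b}) is to make $x\o_By\mapsto x\k^1(g)\o_B\k^2(g)y$ well defined; (\ref{2c}) is not needed there.

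The genuine gap is the unnumbered identity $\k^1(h)_{(0)}\o_B\k^2(h)\o\k^1(h)_{(1)}=\k^1(h_2)\o_B\k^2(h_2)\o S(h_1)$, which you do not prove.
\begin{itemize}
\item As written, $\Phi$ discards its third argument $k$, so it cannot distinguish the two sides.
\item Presumably you meant $x\o_By\o k\mapsto xy_{(0)}\o y_{(1)}\o ky_{(2)}$, since that is what your formula for $\Phi$ of the left side computes. This map is not injective on $A\o_BA\o H$. Untwisting with $S$ only returns $x\o_B1_{(0)}y\o k1_{(1)}$. Concretely, for a groupoid algebra $A=H$ over $B=H_t$, it sends $h\o_B1\o g\neq 0$ to $0$ whenever $h$ and $g$ have different sources. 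So you would have to show both sides lie in a subspace where it is injective, and you leave this open.
\item Your claim that $\Phi$ of the left side ``should be rewritten as $(id\o\Delta)(\r\o id)(1_{(0)}\o h1_{(1)})$ with the legs suitably placed'' is exactly the crux, and it is unjustified. What you can compute from $\textup{can}(\k(h))$ is $(\r\o id)\textup{can}(\k(h))$. There $\k^1_{(1)}$ appears multiplied by $\k^2_{(1)}$, and separating them needs an antipode argument that is missing.
\end{itemize}

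No new map is needed, because the bijection $\textup{can}\o id$ already sees the coaction on the first factor. It sends the right side to $1_{(0)}\o h_21_{(1)}\o S(h_1)$ and the left side to $\k^1_{(0)}\k^2_{(0)}\o\k^2_{(1)}\o\k^1_{(1)}$. To compute the latter, apply $\r\o id$ to $\textup{can}(\k(h))=1_{(0)}\o h1_{(1)}$:
\[
\k^1_{(0)}\k^2_{(0)}\o\k^1_{(1)}\k^2_{(1)}\o\k^2_{(2)}=1_{(0)}\o1_{(1)}\o h1_{(2)}.
\]
Then apply $T(a\o x\o y)=a\o y_2\o xS(y_1)$ to both sides.

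On the left, three facts give exactly $\k^1_{(0)}\k^2_{(0)}\o\k^2_{(1)}\o\k^1_{(1)}$:
\begin{itemize}
\item $x_1S(x_2)=\varepsilon_t(x)$;
\item $y_{(0)}\o\varepsilon_t(y_{(1)})\o y_{(2)}=1_{(0)}y_{(0)}\o1_{(1)}\o y_{(1)}$, which follows from (\ref{1.2b});
\item $\r(\k^1)\r(1)=\r(\k^1)$.
\end{itemize}

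On the right, the same steps give $1_{(0')}1_{(0)}\o h_21_{(1)}\o1_{(1')}S(h_1)$, where $1_{(1')}\in H_t$ by (\ref{1.2b}) with $a=1$. For $z\in H_t$ one has $zS(h_1)\o h_2=S(h_1)\o h_2z$: write $z=S(y)$ with $y\in H_s$ and apply (\ref{1c}). Combining this with $\r(1)^2=\r(1)$, the right side equals $1_{(0)}\o h_21_{(1)}\o S(h_1)$, as required.
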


In addition, it is straightforward to obtain  
\begin{equation}
\textup{can}^{-1}(a1_{(0)}\o h1_{(1)})=\sum a\k^1(h)\o_B \k^2(h).\label{2f}
\end{equation}

Let $M$ be a left module over an algebra $A$. We write $\textup{p.dim}(_AM)$ and $\textup{i.dim}(_AM)$ for the projective and injective dimensions of $M$, respectively. The left and right global dimensions of $A$ are denoted by $\textup{lgl.dim}(A)$ and $\textup{rgl.dim}(A)$.

The algebra $A$ is said to have finite injective dimension if the injective dimensions of the left module $_AA$  and the right module $A_A$ are both finite and equal; this common value is denoted by $\textup{i.dim}(A)$.

Similarly, $A$ is said to have finite global dimension if its left and right global dimensions are finite and coincide; this common value is denoted by $\textup{gl.dim}(A)$.

Let $R$ be a ring. Recall from \cite{Zhu}, an $R$-module $M$ is said to be of type $\mathbf{FP}_n$ ($n \geq 0$) if there is a projective resolution
$$ \cdots\rightarrow P_{n+1}\rightarrow P_n\rightarrow \cdots\rightarrow P_1\rightarrow P_0\rightarrow M\rightarrow 0, $$
where $P_0, P_1, \dots, P_n$ are finitely generated.
Furthermore, $M$ is said to be of type $\mathbf{FP}_{\infty}$ if each $P_n$ in the above projective resolution is finitely generated for all integers $n \geq 0$.
An $R$-module $M$ of type $\mathbf{FP_{\infty}}$ is said to be of type $\mathbf{FP}$, if $M$ has finite projective dimension.
 
\section{An isomorphism between Ext-groups}
 \def\theequation{2.\arabic{equation}}
\setcounter{equation} {0}

Let $H$ be a weak Hopf algebra. In this section, we mainly construct an isomorphism between Ext-groups which will be used later.
  
\begin{lemma}\label{lem:2l}
Let $H$ be a weak Hopf algebra with bijective antipode. 
\begin{itemize}
  \item [(1)] For right $H$-modules $V,W,X$, we have a linear isomorphism
$$\textup{Hom}_{H^{op}}(V\overline{\o} W, X)\cong\textup{Hom}_{H^{op}}(V,\textup{Hom}_{H^{op}_s}(W, X)),$$
which is natural in $V,W,X$.
  \item [(2)] For left $H$-modules $V,W,X$, we have a linear isomorphism
$$\textup{Hom}_H(V\widehat{\o} W, X)\cong\textup{Hom}_H(V,\textup{Hom}_{H_s}(W, X)),$$
which is natural in $V,W,X$.
\end{itemize}

\end{lemma}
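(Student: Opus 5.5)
We prove (2) in detail; (1) follows by the mirror argument, applying (2) to $H^{op}$ (or $H^{cop}$, with the antipode adjusted, which is why bijectivity of $S$ is needed). The plan is to write down explicit mutually inverse maps, in the spirit of tensor–hom adjunction.

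First we fix the $H$-module structure on $\textup{Hom}_{H_s}(W,X)$. Here $W$ and $X$ are regarded as $H_s$-modules by restriction. For $f\in\textup{Hom}_{H_s}(W,X)$, $h\in H$ and $w\in W$ we set
$$(h\cdot f)(w)=h_1\cdot f(S(h_2)\cdot w).$$
We must check three things:
\begin{itemize}
  \item[(a)] $h\cdot f$ is again $H_s$-linear.
  \item[(b)] This is an action, i.e. $1\cdot f=f$ and $(gh)\cdot f=g\cdot(h\cdot f)$.
  \item[(c)] The action is well defined.
\end{itemize}
For (b), the unit axiom uses $1_1\cdot f(S(1_2)w)$. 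Since $S(1_2)\in H_s$ and $f$ is $H_s$-linear, this becomes $1_1S(1_2)\cdot f(w)$, and $1_1S(1_2)=\varepsilon_t(1)=1$. Multiplicativity follows from $\Delta$ being multiplicative and $S$ being anti-multiplicative. For (a), $H_s$-linearity of $h\cdot f$ uses identity (\ref{1c}): moving $y\in H_s$ through $h_1\otimes h_2$ turns $S(h_2)y$ into an expression where $y$ sits on $h_1$, and then the $H_s$-linearity of $f$ is applied. This is the step where bijectivity of $S$ may be needed, since we must relate $S(H_s)=H_t$ and handle the version of (\ref{1c}) with $S^{-1}$.

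Next we define the two maps of the adjunction. For $\phi\in \textup{Hom}_H(V\widehat{\o}W,X)$ set $\Phi(\phi)(v)(w)=\phi(1_1v\o 1_2w)$. For $\psi\in\textup{Hom}_H(V,\textup{Hom}_{H_s}(W,X))$ set $\Psi(\psi)(v\widehat{\o}w)=\psi(v)(w)$.

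For $\Psi(\psi)$ we must show it is well defined on $V\widehat{\o}W=V\o_{H_t}W$, that is, that it is balanced over $H_t$. Writing $x=S(y)$ with $y\in H_s$ and using $H$-linearity of $\psi$ together with (\ref{1a}) should give this. $H$-linearity of $\Psi(\psi)$ amounts to
$$\psi(h_1v)(h_2w)=(h_1\cdot\psi(v))(h_2w)=h_1\psi(v)(S(h_2)h_3w),$$
followed by the antipode axiom $S(h_2)h_3=\varepsilon_s(h_2)$ and identity (\ref{1b}), after which $1_1\varepsilon(\cdot\,1_2)$ is absorbed into the $H_s$-linearity of $\psi(v)$.

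For $\Phi(\phi)$ we must check that each $\Phi(\phi)(v)$ is $H_s$-linear, using (\ref{1a}) for $y\in H_s$, and that $\Phi(\phi)$ is $H$-linear. The latter reduces to
$$h_1\phi(S(h_2)_1v\o S(h_2)_2w)=\phi(h_1v\o h_2S(h_3)w),$$
obtained by applying $H$-linearity of $\phi$ and then simplifying with $h_2S(h_3)=\varepsilon_t(h_2)$ and (\ref{1b}). Finally we show that $\Phi$ and $\Psi$ are mutually inverse, using that $1_1v\o1_2w$ is the canonical representative in $V\widehat{\o}W$. Naturality in $V,W,X$ is immediate from the formulas.

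The main obstacle will be the Sweedler manipulations in the $H$-linearity verifications, specifically collapsing terms like $h_1\otimes S(h_2)_1\otimes S(h_2)_2$ via the weak antipode identities. Most likely this will use $S(h)_1\o S(h)_2=S(h_2)\o S(h_1)$ together with $h_1S(h_2)=\varepsilon_t(h)$ and (\ref{1b}). If the left action formula does not close up, I would instead try $(h\cdot f)(w)=h_2 f(S^{-1}(h_1)w)$; bijectivity of $S$ makes that alternative available.
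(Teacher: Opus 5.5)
Your proposal follows the paper's proof: the same left $H$-action $(h\cdot f)(w)=h_1\cdot f(S(h_2)\cdot w)$ on $\textup{Hom}_{H_s}(W,X)$ and the same pair of mutually inverse maps (your $\Psi,\Phi$ are the paper's $\Gamma,\Lambda$). The only difference is the order: you verify (2) and get (1) by applying it to $H^{op}$ with antipode $S^{-1}$, which reproduces exactly the paper's action $(f\cdot h)(w)=f(w\cdot S^{-1}(h_2))\cdot h_1$, whereas the paper verifies (1) and calls (2) similar.

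One slip to fix is in the $H$-linearity check for $\Phi(\phi)$. The left side should read $h_1\phi(1_1v\o 1_2S(h_2)w)=\phi(h_1v\o\varepsilon_t(h_2)w)$. The collapse to $\phi(1_1hv\o 1_2w)=\Phi(\phi)(hv)(w)$ then uses $h_1\o\varepsilon_t(h_2)=1_1h\o 1_2$, not (\ref{1b}).
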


\begin{proof}
(1) First of all, $\textup{Hom}_{H^{op}_s}(W, X)$ is a right $H$-module under the action
$$(f\cdot h)(w)=f(w\cdot S^{-1}(h_2))\cdot h_1,$$
for all $f\in\textup{Hom}_{H^{op}_s}(W, X),h\in H,w\in W$. For $x\in H_s$,
\begin{align*}
(f\cdot h)(w\cdot x)&=f(w\cdot xS^{-1}(h_2))\cdot h_1\\
&=f(w\cdot S^{-1}(h_2)S(x))\cdot h_1\\
&\stackrel{(\ref{1c})}{=}f(w\cdot S^{-1}(h_2))\cdot h_1x\\
&=(f\cdot h)(w)\cdot x,
\end{align*}
hence $f\cdot h\in\textup{Hom}_{H^{op}_s}(W, X)$. And it is a routine exercise to check that is action is associative and unital. 

Now define a linear map $\Gamma:\textup{Hom}_{H^{op}}(V,\textup{Hom}_{H^{op}_s}(W, X))\rightarrow \textup{Hom}_{H^{op}}(V\overline{\o} W, X)$ by
$$\Gamma(f)(v\cdot 1_1\o w\cdot 1_2)=f(v\cdot 1_1)(w\cdot 1_2),$$
for $f\in\textup{Hom}_{H^{op}}(V,\textup{Hom}_{H^{op}_s}(W, X)),v\in V,w\in W.$ For $h\in H$, we have
\begin{align*}
&\Gamma(f)((v\cdot 1_1\o w\cdot 1_2)\cdot h)=\Gamma(f)(v\cdot h_1\o w\cdot h_2)\\
&=f(v\cdot h_1)(w\cdot h_2)=(f(v)\cdot h_1)(w\cdot h_2)\\
&=f(v)(w\cdot h_3S^{-1}(h_2))\cdot h_1=f(v)(w\cdot S^{-1}(h_2S(h_3)))\cdot h_1\\
&=f(v)(w\cdot S^{-1}(\varepsilon_t(h_2)))\cdot h_1=f(v)(w)\cdot S^{-1}(\varepsilon_t(h_2))h_1\\
&=f(v)(w)\cdot h,
\end{align*}
thus $\Gamma(f)$ is right $H$-linear and $\Gamma$ is well defined. 

Then we define a linear map $\Lambda: \textup{Hom}_{H^{op}}(V\overline{\o} W, X)\rightarrow\textup{Hom}_{H^{op}}(V,\textup{Hom}_{H^{op}_s}(W, X))$ by
$$\Lambda(g)(v)(w)=g(v\cdot 1_1\o w\cdot 1_2),$$
for $g\in\textup{Hom}_{H^{op}}(V\overline{\o} W, X),v\in V,w\in W$. For $h\in H,x\in H_s$,
\begin{align*}
\Lambda(g)(v)(w\cdot x)&=g(v\cdot 1_1\o w\cdot x1_2)\\
&=g(v\cdot x_1\o w\cdot x_2)\\
&=g(v\cdot 1_1\o w\cdot 1_2)\cdot x,
\end{align*}
and
\begin{align*}
&(\Lambda(g)(v)\cdot h)(w)=\Lambda(g)(v)(w\cdot S^{-1}(h))\\
&=g(v\cdot 1_1\o w\cdot S^{-1}(h_2)1_2)\cdot h_1=g(v\cdot h_1\o w\cdot S^{-1}(h_3) h_2)\\
&=g(v\cdot h_1\o w\cdot S^{-1}(\varepsilon_s(h_2)))\stackrel{(\ref{1c})}{=}g(v\cdot h1_1\o w\cdot 1_2)\\
&=\Lambda(g)(v\cdot h)(w),
\end{align*}
which means that $\Lambda$ is well defined. Finally since
\begin{align*}
&\Lambda(\Gamma(f))(v)(w)=\Gamma(f)(v\cdot 1_1\o w\cdot 1_2)\\
&=f(v\cdot 1_1)(w\cdot 1_2)=(f(v)\cdot 1_1)(w\cdot 1_2)\\
&=f(v)(w\cdot 1_3S^{-1}(1_2))\cdot 1_1=f(v)(w\cdot S^{-1}(\varepsilon_t(1_2)))\cdot 1_1\\
&=f(v)(w)\cdot S^{-1}(\varepsilon_t(1_2))1_1=f(v)(w),
\end{align*}
and 
\begin{align*}
\Gamma(\Lambda(g))(v\cdot 1_1\o w\cdot 1_2)&=\Lambda(g)(v\cdot 1_1)(w\cdot 1_2)=g(v\cdot 1_1\o w\cdot 1_2),
\end{align*}
we conclude that $\Gamma$ and $\Lambda$ are isomorphisms. 

(2) For a weak Hopf algebra $H$ and left $H$-modules $M,N$, $\textup{Hom}_{H_s}(M,N)$ is a left $H$-module under the action
$$(h\cdot f)(m)=h_1\cdot f(S(h_2)\cdot m),$$
for all $h\in H,f\in\textup{Hom}_{H_t}(M,N).$ The second isomorphism could be verified similarly.
The proof is completed.
\end{proof}

\begin{remark}
From the above lemma, 

(1) for right $H$-modules $M,N$, if $N$ is projective, so is $M\overline{\o} N$.

(2) for left $H$-modules $M,N$, if $M$ is projective, so is $M\widehat{\o} N$.
\end{remark}

\begin{proposition}\label{pro:2m}
Let $H$ be a weak Hopf algebra.

(1) For all $i\geq0$ and right $H$-modules $V,W, X$, we have 
$$\textup{Ext}^i_{H^{op}}(V\overline{\o} W, X)\cong\textup{Ext}^i_{H^{op}}(V,\textup{Hom}_{H^{op}_s}(W, X)).$$

(2) For all $i\geq0$ and left $H$-modules $V,W, X$, we have 
$$\textup{Ext}^i_{H}(V\widehat{\o} W, X)\cong\textup{Ext}^i_{H}(V,\textup{Hom}_{H_s}(W, X)).$$
\end{proposition}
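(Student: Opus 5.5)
The plan is to derive the Ext isomorphism from the Hom isomorphism of Lemma \ref{lem:2l} by a standard derived-functor argument, resolving in the first variable $V$. I treat part (1); part (2) is symmetric, using part (2) of Lemma \ref{lem:2l} and part (2) of the Remark.

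First, choose a projective resolution $P_\bullet\to V\to 0$ of right $H$-modules. Apply the functor $-\overline{\o}W$. The goal is to show that $P_\bullet\overline{\o}W\to V\overline{\o}W\to0$ is a projective resolution of $V\overline{\o}W$. This needs two facts.

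\emph{Projectivity.} Each $P_n\overline{\o}W$ must be projective. By Lemma \ref{lem:2l}(1), $\textup{Hom}_{H^{op}}(P_n\overline{\o}W,-)\cong\textup{Hom}_{H^{op}}(P_n,\textup{Hom}_{H^{op}_s}(W,-))$. The right-hand side is exact provided $\textup{Hom}_{H^{op}_s}(W,-)$ is exact. That holds because $H_s$ is semisimple (\cite[Proposition 2.11]{Bohm99}), so every $H_s$-module is projective. This is just the Remark with the roles of the two factors swapped, so I would state it that way.

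\emph{Exactness.} The functor $-\overline{\o}W$ must be exact. Using the identification $M\overline{\o}N=M\o_{H_s}N$ recalled in Section 1, $-\overline{\o}W\cong -\o_{H_s}W$ as functors of the underlying $H_s$-modules. Since $H_s$ is semisimple, $W$ is flat over $H_s$, so this functor is exact. One must check that this identification is natural in the first variable, so that the differentials of $P_\bullet\overline{\o}W$ correspond to $d\o_{H_s}W$. This is clear, since both are induced by $d\o id$.

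With these two facts, $\textup{Ext}^i_{H^{op}}(V\overline{\o}W,X)$ is the $i$-th cohomology of $\textup{Hom}_{H^{op}}(P_\bullet\overline{\o}W,X)$. By the naturality in the first variable of the isomorphism in Lemma \ref{lem:2l}(1), this complex is isomorphic to $\textup{Hom}_{H^{op}}(P_\bullet,\textup{Hom}_{H^{op}_s}(W,X))$. The cohomology of the latter is $\textup{Ext}^i_{H^{op}}(V,\textup{Hom}_{H^{op}_s}(W,X))$.

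The main obstacle I expect is the exactness step: making rigorous that $\overline{\o}$ agrees with $\o_{H_s}$ functorially, and that the $H_s$-module structures used are the correct ones. The fallback is to avoid this: $\overline{\o}$ is the image of the idempotent $\Delta(1)$ acting on $P\o W$, hence a natural direct summand of the exact functor $-\o W$, and is therefore exact.
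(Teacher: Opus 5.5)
Your proposal is correct and follows the paper's proof. Both resolve $V$ projectively, observe that $P_\bullet\overline{\o}W\to V\overline{\o}W\to 0$ is again a projective resolution, and transport cohomology through the isomorphism of Lemma \ref{lem:2l}, which is natural in the first variable. The only difference is that you justify the resolution step yourself, while the paper simply cites \cite[Lemma 6.4(3)]{RWZ}: you get projectivity from the adjunction together with semisimplicity of $H_s$, and exactness from the idempotent $\Delta(1)$ (or from $\o_{H_s}$). You are also right that Lemma \ref{lem:2l}(1) gives projectivity of $V\overline{\o}W$ from projectivity of the first factor, not the second factor as the Remark is phrased.
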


\begin{proof}
We only prove the first assertion and the second could be verified similarly.
Let $P_\bullet$ be an $H$-projective resolution of $V$. Then
$$\cdots\rightarrow P_i\overline{\o} W\rightarrow\cdots \rightarrow P_1\overline{\o} W \rightarrow P_0\overline{\o} W \rightarrow V\overline{\o} W\rightarrow0$$
is an $H$-projective resolution for $V\overline{\o} W$ by \cite[Lemma 6.4(3)]{RWZ}. By Lemma \ref{lem:2l} we get a commutative diagram of complexes as follows:
$$\begin{CD}
\textup{Hom}_{H^{op}}(P_{0},\textup{Hom}_{H^{op}_s}(W,X)) @> >>\cdots @> >>\textup{Hom}(P_{i},\textup{Hom}_{H^{op}_s}(W,X))\\
@VV\cong V @. @VV\cong V \\
\textup{Hom}_{H^{op}}(P_{0}\overline{\o} W,X) @> >> \cdots @> >>\textup{Hom}_{H^{op}}(P_{i}\overline{\o} W,X)
\end{CD}$$
The homology of the top row is $\textup{Ext}^\bullet_{H^{op}}(V,\textup{Hom}_{H^{op}_s}(W, X))$, while the homology of the bottom is $\textup{Ext}^\bullet_{H^{op}}(V\overline{\o} W, X)$. The proof is completed.
\end{proof}

The following two corollaries follow directly from the proof of  \cite[Corollary 1.4]{BG} and left to the reader.
\begin{corollary}\label{coro:2n}
Let $H$ be a weak Hopf algebra and $V,W,X$ are right $H$-modules. Then
\begin{itemize}
  \item [(1)] $\textup{p.dim}_{H^{op}}(V\overline{\o }W)\leq \textup{p.dim}_{H^{op}}(V)$.
  \item [(2)] $\textup{i.dim}_{H^{op}}(\textup{Hom}_{H^{op}_s}(V,X))\leq \textup{i.dim}_{H^{op}}(X)$.
  \item [(3)] $\textup{rgl.dim}(H)=\textup{p.dim}(_{H^{op}}H_s)$.
\end{itemize}
\end{corollary}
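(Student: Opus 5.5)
All three parts will be derived from Proposition \ref{pro:2m}(1), following the pattern of \cite[Corollary 1.4]{BG}. Throughout I use that $\textup{Ext}^i_{H^{op}}(-,-)$ detects projective and injective dimension by vanishing in degrees above the dimension.

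For (1), suppose $\textup{p.dim}_{H^{op}}(V)=n<\infty$; if it is infinite there is nothing to prove. For any right $H$-module $X$ and any $i>n$, Proposition \ref{pro:2m}(1) gives
$$\textup{Ext}^i_{H^{op}}(V\overline{\o}W,X)\cong \textup{Ext}^i_{H^{op}}(V,\textup{Hom}_{H^{op}_s}(W,X))=0.$$
Since $X$ is arbitrary, $\textup{p.dim}_{H^{op}}(V\overline{\o}W)\le n$.

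For (2), I apply the same isomorphism with the roles rearranged. Let $\textup{i.dim}_{H^{op}}(X)=m<\infty$ and let $U$ be an arbitrary right $H$-module. Then
$$\textup{Ext}^i_{H^{op}}(U,\textup{Hom}_{H^{op}_s}(V,X))\cong\textup{Ext}^i_{H^{op}}(U\overline{\o}V,X)=0$$
for all $i>m$. Hence $\textup{i.dim}_{H^{op}}\textup{Hom}_{H^{op}_s}(V,X)\le m$.

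For (3), I first need $H_s$ to act as a unit for $\overline{\o}$. The claim is that for every right $H$-module $V$ there is a natural isomorphism $V\cong V\overline{\o}H_s$, with $H_s$ carrying the action $y\cdot h=\varepsilon_s(yh)$. I would check this via the identification $V\overline{\o}H_s=V\o_{H_s}H_s\cong V$ and then verify $H$-linearity: the map is $v\mapsto v\cdot 1_1\o \varepsilon_s(1_2)$, and linearity should follow from (\ref{1a}) and (\ref{1b}). This is the step I expect to be the main technical point, because one must make sure the $H_s$-bimodule structure used in $\o_{H_s}$ matches the module action on $H_s$. The known analogue in \cite{RWZ}, where the target counital base serves as the unit object of the monoidal category of modules, can be adapted here.

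Granting the unit isomorphism, Proposition \ref{pro:2m}(1) with $V=V$, $W=H_s$ (placed so that $H_s$ occupies the slot carrying $V$) yields
$$\textup{Ext}^i_{H^{op}}(V,X)\cong\textup{Ext}^i_{H^{op}}(V\overline{\o}H_s,X).$$
I would use the symmetric version of the unit isomorphism, $V\cong H_s\overline{\o}V$, which gives
$$\textup{Ext}^i_{H^{op}}(V,X)\cong\textup{Ext}^i_{H^{op}}(H_s\overline{\o}V,X)\cong \textup{Ext}^i_{H^{op}}(H_s,\textup{Hom}_{H^{op}_s}(V,X)).$$
If $\textup{p.dim}(H_s)_{H}=d$, the right-hand side vanishes for $i>d$ for all $V$ and $X$, so $\textup{rgl.dim}(H)\le d$. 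The reverse inequality $d\le\textup{rgl.dim}(H)$ is trivial, and this gives the equality in (3).
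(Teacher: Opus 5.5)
Your proposal is correct and is the argument the paper intends: the paper defers to the proof of \cite[Corollary 1.4]{BG}, i.e.\ (1) and (2) follow from Ext-vanishing via Proposition \ref{pro:2m}(1), and (3) from writing every right $H$-module $V$ as $H_s\overline{\o}V$. The one step you assert rather than check is this left unit isomorphism $H_s\overline{\o}V\cong V$ (the right-unit map $v\mapsto v\cdot 1_1\o\varepsilon_s(1_2)$ you did describe would be circular here); it holds via $y\overline{\o}v\mapsto v\cdot S^{-1}(y)$, using bijectivity of $S$ and the identity $yh_1\o h_2=h_1\o S^{-1}(y)h_2$ for $y\in H_s$, which is (\ref{1c}) applied to $H^{op,cop}$.
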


 \begin{corollary}
 Let $H$ be a weak Hopf algebra and $V,W,X$ are left $H$-modules. Then
 \begin{itemize}
  \item [(1)] $\textup{p.dim}_H(V\widehat{\o}W)\leq \textup{p.dim}_H(V)$.
  \item [(2)] $\textup{i.dim}_{H}(\textup{Hom}_{H_s}(W,X))\leq \textup{i.dim}_{H}(X)$.
  \item [(3)] $\textup{lgl.dim}(H)=\textup{p.dim}(_{H}H_t)$.
\end{itemize}
 \end{corollary}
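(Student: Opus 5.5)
We prove the three items as the left-module analogues of Corollary \ref{coro:2n}. They rest on part (2) of Proposition \ref{pro:2m} and on part (2) of the Remark following Lemma \ref{lem:2l}, which says that $P\widehat{\o}N$ is projective whenever $P$ is. We use the standard facts that $\textup{p.dim}_H(M)\le n$ if and only if $\textup{Ext}^{n+1}_H(M,-)=0$, and that $\textup{i.dim}_H(X)\le n$ if and only if $\textup{Ext}^{n+1}_H(-,X)=0$.

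\textbf{Item (1).} Suppose $\textup{p.dim}_H(V)=n<\infty$; otherwise there is nothing to prove. Take a projective resolution $P_\bullet\to V$ of length $n$. Applying $-\widehat{\o}W$ gives the complex $P_\bullet\widehat{\o}W\to V\widehat{\o}W$. By \cite[Lemma 6.4]{RWZ}, as used in the proof of Proposition \ref{pro:2m}, this complex is exact. By the Remark, each term is projective. So it is a projective resolution of $V\widehat{\o}W$ of length $n$.

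\textbf{Item (2).} For any left $H$-module $Y$ and any $i>\textup{i.dim}_H(X)$, Proposition \ref{pro:2m}(2) gives
$$\textup{Ext}^i_H(Y,\textup{Hom}_{H_s}(W,X))\cong\textup{Ext}^i_H(Y\widehat{\o}W,X)=0.$$
Hence $\textup{i.dim}_H(\textup{Hom}_{H_s}(W,X))\le\textup{i.dim}_H(X)$.

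\textbf{Item (3).} The inequality $\textup{p.dim}(_HH_t)\le\textup{lgl.dim}(H)$ is trivial. For the reverse inequality we need $H_t$ to play the role of the trivial module, so that $H_t\widehat{\o}V\cong V$ naturally for every left $H$-module $V$. Here $H_t$ is the left $H$-module under $h\cdot y=\varepsilon_t(hy)$. The candidate isomorphism is
$$1_1\cdot y\o 1_2\cdot v\mapsto yv,\qquad v\mapsto 1_1\o 1_2\cdot v .$$
To check it, we use the identification $H_t\widehat{\o}V=H_t\o_{H_t}V$ together with the identities (\ref{1a}) and (\ref{1b}), and verify $H$-linearity using $\varepsilon_t(h_1)\o h_2=S(1_1)\o1_2h$.

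Granting this, we apply item (1) with the factors placed so that the projective-dimension bound lands on $H_t$. Note that item (1) bounds by the left factor, matching the Remark. This gives
$$\textup{p.dim}_H(V)=\textup{p.dim}_H(H_t\widehat{\o}V)\le\textup{p.dim}_H(H_t)$$
for every $V$, hence $\textup{lgl.dim}(H)\le\textup{p.dim}(_HH_t)$.

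The main obstacle is this unit isomorphism $H_t\widehat{\o}V\cong V$. In particular we must confirm it is compatible with the choice of $H_s$ versus $H_t$ in the Hom adjunction of Lemma \ref{lem:2l}(2). If the factor order turns out wrong, we instead apply Proposition \ref{pro:2m}(2) directly with $V=H_t$: this gives $\textup{Ext}^i_H(H_t\widehat{\o}W,X)\cong\textup{Ext}^i_H(H_t,\textup{Hom}_{H_s}(W,X))$, which vanishes for $i>\textup{p.dim}(H_t)$, and then conclude via $H_t\widehat{\o}W\cong W$.
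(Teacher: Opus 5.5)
Your proof is correct and is the same argument the paper defers to, namely the proof of \cite[Corollary 1.4]{BG} carried over to the weak setting: (1) by applying $-\widehat{\otimes}W$ to a projective resolution, (2) from Proposition \ref{pro:2m}(2), and (3) from $H_t\widehat{\otimes}V\cong V$ together with (1). The unit isomorphism you flag does hold, so no fallback is needed, but its inverse should read $v\mapsto 1_1\cdot 1_H\otimes 1_2\cdot v=S(1_1)\otimes 1_2\cdot v$, since in $1_1\otimes 1_2\cdot v$ as you wrote it the first leg lies in $H_s$, not $H_t$; both composites are the identity because $\varepsilon_t(1_1y)\otimes 1_2=S(1_1)\otimes 1_2y$ for $y\in H_t$, by (\ref{1a}) and (\ref{1b}).
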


 \section{Cohomology on weak Hopf Galois extensions}
 \def\theequation{3.\arabic{equation}}
\setcounter{equation} {0}

Let $B=A^{coH}\subseteq A$ be a weak $H$-Galois extension. In this section, we construct spectral sequences relating the Hochschild cohomologies of $A$ and $B$, extending the main results of \cite{Ste} to the weak Hopf algebraic framework.

For an algebra $A$, we write $A^e=A\o A^{op}$ for its enveloping algebra, so that an $A$-$A$-bimodule $M$ could  be identified as a left $A^e$-module. Let $H^n(A,-)$ be the $n$-th right derived functor of $\textup{Hom}_{A^e}(A,-)$ in the category of $A^e$-modules. The spaces $H^\bullet(A,M)$ are called the Hochschild cohomology groups of $A$ with coefficients in $M$. For more details one can refer to \cite{Ste}.

 For any $B^e$-module $M$, define
$$M^B=\{m\in M\mid b\cdot m=m\cdot b, \forall b\in B\},\quad M_B=M/[M,B],$$
where $[M,B]$ is the subspace of $M$ generated by all commutators $[m,b]=m\cdot b-b\cdot m$.
 
It is worthwhile to point out that $\k(H)$ lies in $(A\o_BA)^B$ from the identity (\ref{2b}) and $\k$ could be modified as
$$\k: H\rightarrow (A\o_BA)^B.$$

\begin{lemma}\label{lem:2b}
Let $H$ be a weak Hopf algebra and $A$ a right $H$-comodule algebra. 

(a) For any $A^e$-module $M$, $M^B$ is a right $H$-module with the following action
$$m\leftharpoonup h=\sum \k^1(h)\cdot m\cdot \k^2(h),$$
for all $m\in M^B,h\in H$.

(b) For any $A^e$-module $M$, $M_B$ is a left $H$-module with the following action
$$h\rightharpoonup(m+[M,B])=\sum \k^2(h)\cdot m\cdot \k^1(h)+[M,B],$$
for all $m\in M,h\in H$.
\end{lemma}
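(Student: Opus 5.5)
For (a) I would check four things in order: the formula is well defined; it lands in $M^B$; it is associative; it is unital. Part (b) is dual and runs the same way.

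\emph{Well-definedness.} The expression $\sum\k^1(h)\cdot m\cdot\k^2(h)$ is the image of $\k(h)\in A\o_BA$ under the map $A\o_BA\to M$, $a\o_B a'\mapsto a\cdot m\cdot a'$. This map is balanced over $B$ precisely because $m\in M^B$: $ab\cdot m\cdot a'=a\cdot m\cdot ba'$ for $b\in B$.

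\emph{Landing in $M^B$.} Take $b\in B$. By (\ref{2b}),
$$b\cdot(m\leftharpoonup h)=\sum b\k^1(h)\cdot m\cdot\k^2(h)=\sum\k^1(h)\cdot m\cdot\k^2(h)b=(m\leftharpoonup h)\cdot b.$$
Applying the balanced map above to both sides of (\ref{2b}) is legitimate, since that map is $B$-bilinear in the outer variables.

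\emph{Associativity.} By (\ref{2a}),
$$m\leftharpoonup(gh)=\sum\k^1(h)\k^1(g)\cdot m\cdot\k^2(g)\k^2(h)=(m\leftharpoonup g)\leftharpoonup h,$$
so we get a right action. To make this rigorous I would use the map $A\o_BA\o_BA\to M$, $a\o a'\o a''\mapsto a\cdot m\cdot a'\cdot a''$ (suitably arranged), and note that (\ref{2a}) is an identity in $A\o_BA$ built from products of two copies of $\k$. The sum $\sum\k^1(h)\k^1(g)\o\k^2(g)\k^2(h)$ should be read as the image of $\k(g)\o\k(h)$ under a well-defined map $(A\o_BA)^B\o(A\o_BA)^B\to A\o_BA$, namely the multiplication in the ``$B$-centralizer'' algebra with the first factor taken opposite. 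Well-definedness of this map follows again from (\ref{2b}).

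\emph{Unitality.} I expect this to be the main obstacle. We have $\k(1)=\textup{can}^{-1}(1_{(0)}\o 1_{(1)})$. By (\ref{2f}) with $a=1$, $h=1$, together with the element $1\o_B1$, we get $\textup{can}(1\o_B1)=1_{(0)}\o1_{(1)}$. Hence $\k(1)=1\o_B1$, which gives $m\leftharpoonup1=m$. The only subtlety is making sure $1_{(0)}\o 1_{(1)}$ is really the element of $A\overline{\o}H$ in question, which holds by definition of $A\overline{\o}H$.

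\emph{Part (b).} Use the map $A\o_BA\to M_B$, $a\o a'\mapsto a'\cdot m\cdot a+[M,B]$. It is well defined because $a'\cdot m\cdot ba-ba'\cdot m\cdot a\in[M,B]$, and independence of the representative of $m$ modulo $[M,B]$ follows from (\ref{2b}). Then (\ref{2a}) gives
$$(gh)\rightharpoonup\bar m=g\rightharpoonup(h\rightharpoonup\bar m),$$
and $\k(1)=1\o_B1$ gives unitality.
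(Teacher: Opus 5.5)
Your proposal is correct and takes essentially the same route as the paper. Well-definedness comes from balancedness plus (\ref{2b}); the paper just cites \cite[Lemma 4.1]{Ste} there, whereas you also check explicitly that $m\leftharpoonup h\in M^B$. Associativity comes from (\ref{2a}), unitality from $\k(1)=1\o_B1$, and part (b) uses (\ref{2b}) for independence of the representative, exactly as the paper does.

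One slip: in (b) the balancedness check should compare the images of $ab\o a'$ and $a\o ba'$. The difference is $a'\cdot m\cdot ab-ba'\cdot m\cdot a=[a'\cdot m\cdot a,b]\in[M,B]$, not the expression $a'\cdot m\cdot ba-ba'\cdot m\cdot a$ you wrote.
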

 
\begin{proof} 
(a) Actually by (\ref{2b}) and \cite[Lemma 4.1]{Ste} this action is well defined, independent of the choice of $\k^1(h)$ and $\k^2(h)$. And by (\ref{2a}) we have that the action is associative. Since $\k(1_H)=1_A\o_B1_A$, the action is unital.

(b) First of all the left action of $H$ on $M_B$ is well defined. Indeed for all $h\in H, b\in B,m\in M$, 
\begin{align*}
h\rightharpoonup(bm+[M,B])&=\sum \k^2(h)bm\k^1(h)+[M,B]\\
&\stackrel{(\ref{2b})}{=}\sum \k^2(h)m b\k^1(h)+[M,B]\\
&=h\rightharpoonup(mb+[M,B]).
\end{align*}
Then it is easy to check that the action is associative and unital. The proof is completed.
 \end{proof}
 

\begin{definition}
Let $A/B$ be a weak $H$-Galois extension. We say this extension is restricted if the image $\k(H_s)\subseteq B\o_B B$.
\end{definition}

\begin{remark}
Let $H$ be a weak Hopf algebra. Then by \cite[Proposition 2.7]{Caen2007} $H$ is a weak $H$-Galois extension of $H_t$, where $H$ is a right $H$-comodule algebra via $\Delta$ and the inverse of  the Galois map $\textup{can}$ is given by
$$\textup{can}^{-1}:H\overline{\o }H \rightarrow H\o _{H_t}H,\ 1_1\o h1_2\mapsto\varepsilon_t(1_1)S(h_1)\o h_21_2.$$ 
Now suppose that $\Delta(1)=\Delta^{op}(1)$, then $H_t=H_s$.
For $h\in H$,
\begin{align*}
\k(\varepsilon_s(h))&=can^{-1}(1_1\o \varepsilon_s(h)1_2)\\
&=\varepsilon_t(1_1)S(\varepsilon_s(h)_1)\o_{H_t} \varepsilon_s(h)_21_2\\
&=\varepsilon_t(1_1)S(1'_1)\o_{H_t} \varepsilon_s(h)1'_21_2\\
&=S(1'_1 1_1)\o_{H_t} \varepsilon_s(h)1'_21_2\\
&=S(1_1)\o_{H_t} \varepsilon_s(h)1_2\\
&=S(1_1)\o_{H_t} 1_2\varepsilon_s(h)\\
&=S(1_1)1_2\o_{H_t}\varepsilon_s(h)\\
&=1\o_{H_t}\varepsilon_s(h)\in H_s\o_{H_t}H_s= H_t\o_{H_t}H_t.
\end{align*}
\end{remark}
 
\begin{lemma}\label{lem:2c}
Let $A/B$ be a restricted flat weak $H$-Galois extension. Then for an $A^e$-module $M$, there exists a natural right action of $H$ on $H^0(B,M)$ such that 
$$\textup{Hom}_{H^{op}}(H_s,H^0(B,M))\cong H^0(A,M).$$
\end{lemma}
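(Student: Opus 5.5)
The plan is to identify $H^0(B,M)=\textup{Hom}_{B^e}(B,M)$ with $M^B$ and equip it with the right $H$-action $m\leftharpoonup h=\sum\k^1(h)\cdot m\cdot\k^2(h)$ from Lemma \ref{lem:2b}(a). Naturality in $M$ is immediate, because any $A^e$-linear map commutes with left and right multiplication by $\k^1(h),\k^2(h)$. Next, I would use that $\textup{Hom}_{H^{op}}(H_s,N)$ can be identified with a subspace of $N$ via evaluation at $1$. Here $H_s$ is the right $H$-module with $y\cdot h=\varepsilon_s(yh)$, and it is cyclic, generated by $1$, since $1\cdot y=\varepsilon_s(y)=y$ for $y\in H_s$. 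A right $H$-linear $f\colon H_s\to N$ is therefore determined by $n=f(1)$. The first task is to find the condition on $n$ that makes $f$ well defined. I expect it to be $n\leftharpoonup h=n\leftharpoonup\varepsilon_s(h)$ for all $h\in H$, or equivalently that $n$ is annihilated by the kernel of $h\mapsto\varepsilon_s(h)$ acting on $1$. So the plan is to show
$$\textup{Hom}_{H^{op}}(H_s,N)\cong\{n\in N\mid n\leftharpoonup h=n\leftharpoonup\varepsilon_s(h),\ \forall h\in H\},\qquad f\mapsto f(1),$$
with inverse $n\mapsto(y\mapsto n\leftharpoonup y)$. This is the weak analogue of $N^H$.

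Then, with $N=M^B$, I need to show that this "invariant" subspace equals $M^A=H^0(A,M)$. First I compute $m\leftharpoonup y$ for $y\in H_s$. The restriction hypothesis $\k(H_s)\subseteq B\o_BB$ means that $\k(y)=\sum b_i\o_B b'_i$ with $b_i,b'_i\in B$. For $m\in M^B$ this gives $m\leftharpoonup y=\sum b_ib'_i\,m=\k^1(y)\k^2(y)\,m$. By (\ref{2d}), $\k^1(y)\k^2(y)=\varepsilon(y1_{(1)})1_{(0)}$, an element of $B$. Hence $H_s$ acts on $M^B$ through multiplication by elements of $B$.

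For the inclusion $M^A\subseteq$ invariants, take $m\in M^A$. Then $m\leftharpoonup h=\k^1(h)\k^2(h)m=\varepsilon(h1_{(1)})1_{(0)}m$ by (\ref{2d}). On the other side, $m\leftharpoonup\varepsilon_s(h)=\varepsilon(\varepsilon_s(h)1_{(1)})1_{(0)}m$, and the two agree because $\varepsilon(h x)=\varepsilon(\varepsilon_s(h)x)$ for $x\in H_t$. I would verify this from $\varepsilon_s(h)=1_1\varepsilon(h1_2)$, using that $1_{(1)}$ lands in $H_t$ by (\ref{1.2b}).

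For the converse, take an invariant $m\in M^B$ and $a\in A$. Using (\ref{2e}), $a\o 1$-type expansions give
$$a\,m=\sum a_{(0)}\k^1(a_{(1)})\k^2(a_{(1)})\,a\text{-free}\ldots$$
More precisely, I would write $m\,a=\sum a_{(0)}\k^1(a_{(1)})\,m\,\k^2(a_{(1)})$, which uses $m\in M^B$ together with the $B$-balancedness of (\ref{2e}). This equals $\sum a_{(0)}(m\leftharpoonup a_{(1)})$. By invariance it becomes $\sum a_{(0)}(m\leftharpoonup\varepsilon_s(a_{(1)}))$, which by the computation above is $a_{(0)}\varepsilon(\varepsilon_s(a_{(1)})1_{(1)})1_{(0)}m$. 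Finally, (\ref{1.2e}) with $M=A$ and the identity relating $\varepsilon_s$ and $\varepsilon_t$ on $\r(1)$ should collapse this to $am$.

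I expect this last collapse to be the main obstacle. It is where the weak counit identities and (\ref{1.2e}) must be combined carefully, and where the restriction hypothesis is really used. Flatness is not needed at degree $0$; it will matter only for the derived version later.
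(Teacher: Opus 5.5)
Your proposal follows the paper's proof essentially step for step: you identify $\textup{Hom}_{H^{op}}(H_s,M^B)$ with $\{m\in M^B\mid m\leftharpoonup h=m\leftharpoonup\varepsilon_s(h)\}$ via $f\mapsto f(1)$, prove the forward inclusion with (\ref{2d}), and prove the converse with (\ref{2e}), the invariance condition, and the restriction hypothesis (which lets $\k^2(\varepsilon_s(a_{(1)}))\in B$ pass across $m$). The step you flag as the main obstacle is in fact immediate and needs neither (\ref{1.2e}) nor restriction: $\varepsilon(\varepsilon_s(a_{(1)})1_{(1)})=\varepsilon(a_{(1)}1_{(1)})$ by the same counit identity you already use, and $a_{(0)}1_{(0)}\varepsilon(a_{(1)}1_{(1)})=a_{(0)}\varepsilon(a_{(1)})=a$ because $\r(a)\r(1)=\r(a)$.
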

 
 \begin{proof}
 
For $f\in \textup{Hom}_{H^{op}}(H_s,M^B)$ and $h\in H$, we have
\begin{align*}
f(1)\leftharpoonup h=f(1\cdot h)=f(\varepsilon_s(h))=f(\varepsilon^2_s(h))=f(1)\leftharpoonup \varepsilon_s(h),
\end{align*}
and for $m\in M^B$ with $m\leftharpoonup h=m\leftharpoonup\varepsilon_s(h)$, set $f_m(z)=m\leftharpoonup z$ for $z\in H_s$. Then by (\ref{2b}) we have $f_m(z)\in M^B$, and for all $h\in H$,
$$f_m(z\cdot h)=f_m(\varepsilon_s(zh))=m\leftharpoonup \varepsilon_s(zh)=m\leftharpoonup zh=f_m(z)\leftharpoonup h,$$
that is, $f_m\in\textup{Hom}_{H^{op}}(H_s,M^B)$. It is straightforward to verify that the above correspondences are bijective, hence we obtain $$\textup{Hom}_{H^{op}}(H_s,M^B)\cong\left\{m\in M^B\mid m\leftharpoonup h=m\leftharpoonup\varepsilon_s(h),\forall h\in H\right\}.$$

Now for $m\in M^A$ and $h\in H$, 
\begin{align*}
m\leftharpoonup h&=\sum \k^1(h)\cdot m\cdot\k^2(h)=\sum m\cdot \k^1(h)\k^2(h)\\
&=m\cdot1_{(0)}\varepsilon(h1_{(1)})=m\cdot1_{(0)}\varepsilon(1_11_{(1)})\varepsilon(h1_2)\\
&=m\leftharpoonup\varepsilon_s(h).
\end{align*}
Conversely if $m\leftharpoonup h=m\leftharpoonup\varepsilon_s(h)$ for $m\in M^B,h\in H$, then for $a\in A,$
 \begin{align*}
 m\cdot a&\stackrel{(\ref{2e})}{=}\sum a_{(0)}\k^1(a_{(1)})\cdot m\cdot\k^2(a_{(1)})\\
 &=a_{(0)}\cdot(m\leftharpoonup a_{(1)})\\
 &=a_{(0)}\cdot(m\leftharpoonup \varepsilon_s(a_{(1)}))\\
 &=\sum a_{(0)}\k^1(\varepsilon_s(a_{(1)}))\cdot m\cdot\k^2(\varepsilon_s(a_{(1)}))\\
 &=\sum a_{(0)}\k^1(\varepsilon_s(a_{(1)}))\k^2(\varepsilon_s(a_{(1)}))\cdot m\\
 &\stackrel{(\ref{2d})}{=} a_{(0)}1_{(0)}\varepsilon(\varepsilon_s(a_{(1)})1_{(1)})\cdot m\\
 &=a_{(0)}1_{(0)}\varepsilon(a_{(1)}1_{(1)})\cdot m\\
 &=a\cdot m,
 \end{align*}
 which implies that $M^A=\left\{m\in M^B\mid m\leftharpoonup h=m\leftharpoonup\varepsilon_s(h),\forall h\in H\right\}$.
 
 Since $H^0(B,M)=\textup{Hom}_{B^e}(B,M)\cong M^B$, we have 
 $$\textup{Hom}_{H^{op}}(H_s,H^0(B,M))\cong\textup{Hom}_H(H_s,M^B)\cong M^A=H^0(A,M).$$
 
 Let $M,N$ be two $A^e$-modules and $f:M\rightarrow N$ a morphism of $A^e$-modules. Obviously the induced map $f^0:M^A\rightarrow N^A$ is $H$-linear. Thus the $H$-module structure is natural. The proof is completed.
 \end{proof}

Let $R:\! _A\mathcal{M}_A\rightarrow\! _B\mathcal{M}_B$ be the functor of restriction of scalars.

\begin{lemma}\cite{Ste}\label{lem:2d}
If $A$ is flat as left and right $B$-module, then the functor $H^\bullet(B,-)\circ R:\! _A\mathcal{M}_A\rightarrow \mathcal{M}_k$ is cohomological and coeffaceable.
\end{lemma}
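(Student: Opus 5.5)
We must show two things about the family of functors $T^n=H^n(B,R(-))$ on the category of $A^e$-modules. First, they form a cohomological $\delta$-functor: every short exact sequence of $A$-bimodules gives a long exact sequence, connected by natural maps. Second, $T^n$ is coeffaceable for $n\geq1$: every $A$-bimodule $M$ embeds into some $A$-bimodule $I$ with $T^n(I)=0$ for all $n\geq1$.

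\emph{Cohomological part.} Restriction $R$ is exact, since a sequence of $A$-bimodules is exact exactly when it is exact as a sequence of vector spaces. The functors $H^\bullet(B,-)=\textup{Ext}^\bullet_{B^e}(B,-)$ form a universal cohomological $\delta$-functor on $B^e$-modules. Composing an exact functor with a $\delta$-functor gives a $\delta$-functor, so a short exact sequence $0\to M'\to M\to M''\to0$ of $A^e$-modules yields a long exact sequence for $H^\bullet(B,R(-))$, with natural connecting maps.

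\emph{Coeffaceability.} Take $I$ to be an injective $A^e$-module containing $M$; every $A^e$-module embeds in one. It then suffices to show that restriction sends injective $A^e$-modules to $B^e$-modules acyclic for $\textup{Hom}_{B^e}(B,-)$. Here we use adjunction. The functor $R$ has the left adjoint $A\o_B-\o_BA\cong A^e\o_{B^e}-$, and this left adjoint is exact because $A$ is flat as a left and right $B$-module. A right adjoint whose left adjoint is exact preserves injectives, so $R(I)$ is injective over $B^e$. Hence $H^n(B,R(I))=0$ for $n\geq1$.

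The only point that needs care is this last step: exactness of $A\o_B-\o_BA$ on $B$-bimodules. We get it by writing the functor as the composite $A\o_B(-\o_BA)$. The inner functor $-\o_BA$ is exact because $A$ is flat as a left $B$-module, and the outer functor $A\o_B-$ is exact because $A$ is flat as a right $B$-module. Exactness of each step only needs to be checked on underlying sequences. With this in place, both properties of the lemma hold. The lemma is used later, via Grothendieck's theorem, to identify $H^\bullet(B,-)\circ R$ with the derived functors of $H^0(B,-)\circ R$.
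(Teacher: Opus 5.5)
Your argument is correct and is essentially the one the paper takes from \cite{Ste}: exactness of $R$ makes the composite cohomological, and $R$ has the left adjoint $A\o_B-\o_BA$, which is exact because $A$ is flat on both sides, so $R$ preserves injectives and $H^n(B,R(I))=0$ for $n\geq1$ whenever $I$ is an injective $A^e$-module. This preservation of injectives is exactly what the paper later quotes from Stefan's proof to identify $R^qF_2(M)$ with $H^q(B,M)$ in Theorem \ref{thm:2h}.
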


\begin{proposition}\label{pro:2e}
For any $q\geq0$ there exists a right action of $H$ on $H^q(B,M)$.
\end{proposition}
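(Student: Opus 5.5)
The plan follows Stefan's approach and uses the universality of coeffaceable cohomological functors (Grothendieck's theorem on universal $\delta$-functors). By Lemma \ref{lem:2d}, since $A/B$ is flat on both sides, the functor $H^\bullet(B,-)\circ R:\,{}_A\mathcal{M}_A\rightarrow\mathcal{M}_k$ is a cohomological, coeffaceable $\delta$-functor, and hence it is universal. By Lemma \ref{lem:2a}(a) and Lemma \ref{lem:2c}, each $A^e$-module $M$ gives a right $H$-module $H^0(B,M)\cong M^B$ under $m\leftharpoonup h=\sum\k^1(h)\cdot m\cdot\k^2(h)$, and this action is natural in $M$. So for fixed $h\in H$, right multiplication by $h$ is a natural transformation
$$\varphi_h^0:\ H^0(B,-)\circ R\longrightarrow H^0(B,-)\circ R.$$
Universality extends $\varphi^0_h$ uniquely to a morphism of $\delta$-functors $\varphi^\bullet_h=(\varphi^q_h)_{q\geq0}$. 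I would then define $x\leftharpoonup h:=\varphi^q_h(x)$ for $x\in H^q(B,M)$.

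Next I would check the module axioms by the uniqueness half of universality. The map $h\mapsto\varphi^0_h$ is $k$-linear. For $g,h\in H$, both $\varphi^\bullet_{gh}$ and $\varphi^\bullet_h\circ\varphi^\bullet_g$ are morphisms of $\delta$-functors extending $\varphi^0_{gh}=\varphi^0_h\circ\varphi^0_g$; the latter equality is the associativity of the degree-zero right action, which comes from (\ref{2a}). Uniqueness then forces $\varphi^q_{gh}=\varphi^q_h\circ\varphi^q_g$ for every $q$. In the same way $\varphi^0_{1_H}=\mathrm{id}$, since $\k(1_H)=1\o_B1$, and therefore $\varphi^q_{1_H}=\mathrm{id}$. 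Linearity in $h$ follows because $\varphi^\bullet_{\lambda g+\mu h}$ and $\lambda\varphi^\bullet_g+\mu\varphi^\bullet_h$ are both extensions of the same degree-zero map. Together these show that $H^q(B,M)$ is a right $H$-module. The construction is natural in $M$ and compatible with the connecting morphisms.

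The main obstacle is making sure the degree-zero data really form a natural transformation of functors on ${}_A\mathcal{M}_A$ valued in $\mathcal{M}_k$. Two points need care. First, $m\leftharpoonup h$ must lie in $M^B$ and be independent of the chosen representative of $\k(h)\in(A\o_BA)^B$; this uses (\ref{2b}) and \cite[Lemma 4.1]{Ste}. Second, for an $A^e$-linear map $f:M\to N$ we need $f(m\leftharpoonup h)=f(m)\leftharpoonup h$, which holds because $f$ commutes with the left and right $A$-actions. Once these are verified, the identification $H^0(B,M)=\textup{Hom}_{B^e}(B,M)\cong M^B$ is natural, and the abstract universality argument does all the remaining work. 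No explicit resolution is needed, although one could alternatively describe the action on the Hochschild cochain complex by a Stefan-type formula.
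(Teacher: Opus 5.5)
Your proposal is correct and takes essentially the same route as the paper, which defers to Stefan's Proposition 2.4: you extend the degree-zero natural transformation $m\mapsto m\leftharpoonup h$ on $H^0(B,-)\circ R$ by universality of the coeffaceable cohomological functor of Lemma \ref{lem:2d}, and you get the module axioms and naturality from uniqueness of that extension. The only slip is a citation: the degree-zero action on $M^B$ is Lemma \ref{lem:2b}(a), not Lemma \ref{lem:2a}(a).
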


\begin{proof}
The proof of \cite[Proposition 2.4]{Ste} is applicable without changes. Here we will give a brief construction. For any $ h \in H $ let $\rho_0^h(-) : H^0(B, -) \circ R \to H^0(B, -) \circ R$ be the natural morphism given by
$$ \rho_0^h(M)(m) = m \cdot h,$$
for $m\in H^0(B, M)$ and $h\in H$.  By Lemma \ref{lem:2d}, $\rho_0^h(-)$ could be extended to 
$$\rho_{\bullet}^h(-) : H^\bullet(B, -) \circ R \to H^\bullet(B, -) \circ R,$$ 
and $\rho_{\bullet}^h(-)$ defines a natural $H$-module structure on $H^q(B, -) \circ R$.  In addition, any $ f \in \textup{Hom}_{A^e}(M, N) $ induces an $H$-linear map $f^q : H^q(B, M) \to H^q(B, N)$. The proof is completed.
\end{proof}

Let $M$ be an $A^e$-module, then Hom$(A^e,M)$ is a left $A^e$-module with the module structure coming form the right $A^e$-module $A^e$. In detail, for $a,b,x,y\in A$ and $f\in\textup{Hom}(A^e,M)$,
$$((a\o b)\cdot f)(x\o y)=f((x\o y)(a\o b))=f(xa\o by).$$

\begin{lemma}\label{lem:2f}
Let $A/B$ be a flat weak $H$-Galois extension. For  $A^e$-module $M$, we have an isomorphism of right $H$-modules 
$$\textup{Hom}(A^e,M)^B\cong \textup{Hom}(H\overline{\o}A,M).$$
\end{lemma}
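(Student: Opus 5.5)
Following Stefan's argument for the Hopf case, I will pass through the Galois bijection. Write $\textup{Hom}(A^e,M)\cong \textup{Hom}(A\o A,M)$, where $f\in\textup{Hom}(A^e,M)$ corresponds to $\tilde f(x\o y)=f(x\o y)$. The $B$-centralizer condition $b\cdot f=f\cdot b$ reads $f(xb\o y)=f(x\o by)$ for all $b\in B$. So $\textup{Hom}(A^e,M)^B$ is identified with $\textup{Hom}(A\o_BA,M)$. Composing with $\textup{can}^{-1}:A\overline{\o}H\to A\o_BA$ then gives a linear isomorphism $\textup{Hom}(A\o_BA,M)\cong\textup{Hom}(A\overline{\o}H,M)$. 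Finally I use the flip $\tau:H\overline{\o}A\to A\overline{\o}H$. To make this land in the right subspace, I will define $H\overline{\o}A=\{h1_{(1)}\o a1_{(0)}\}$ so that the flip is a bijection between the two idempotent images of $\r(1)$-multiplication. Flatness is not really needed for this bijection; it only guarantees that the modules involved behave well later.

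Concretely, the map $\Phi:\textup{Hom}(A^e,M)^B\to\textup{Hom}(H\overline{\o}A,M)$ is
$$\Phi(f)(h1_{(1)}\o a1_{(0)})=\sum f(a\k^1(h)\o\k^2(h)),$$
using (\ref{2f}) for $\textup{can}^{-1}(a1_{(0)}\o h1_{(1)})$. Its inverse is
$$\Psi(g)(x\o y)=g(y_{(1)}\o xy_{(0)}),$$
which is well defined because $xy_{(0)}\o y_{(1)}$ already lies in $A\overline{\o}H$ by (\ref{1.2b}) and multiplicativity of $\r$. I will check $\Psi(g)\in\textup{Hom}(A^e,M)^B$ from $\r(by)=\r(b)\r(y)$ and $\r(b)=1_{(0)}b\o 1_{(1)}$ for $b\in B$. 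The identities $\Phi\Psi=id$ and $\Psi\Phi=id$ are just $\textup{can}\circ\textup{can}^{-1}=id$ and the reverse.

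The real content is $H$-linearity. On $\textup{Hom}(A^e,M)^B$ the right action is $f\leftharpoonup h=\k^1(h)\cdot f\cdot\k^2(h)$ from Lemma \ref{lem:2b}(a), that is,
$$(f\leftharpoonup h)(x\o y)=f(x\k^1(h)\o\k^2(h)y).$$
I will equip $\textup{Hom}(H\overline{\o}A,M)$ with the action transported from an action on $H\overline{\o}A$. The natural guess is $(g\cdot h)(k\o a)=g(kh\o a)$, possibly twisted through the projection onto $H\overline{\o}A$, that is, left multiplication of $H$ on the first tensorand.

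Verifying that $\Phi$ intertwines these actions is the main obstacle. I will compute $\Psi(g\cdot h)(x\o y)=g(y_{(1)}h\o xy_{(0)})$ and compare it with $\Psi(g)(x\k^1(h)\o\k^2(h)y)$. The second expression expands to $g(\k^2(h)_{(1)}y_{(1)}\o x\k^1(h)\k^2(h)_{(0)}y_{(0)})$. Applying (\ref{2c}) rewrites it as $g(h_2y_{(1)}\o x\k^1(h_1)\k^2(h_1)y_{(0)})$. Then (\ref{2d}) collapses $\k^1\k^2$ into $1_{(0)}\varepsilon(h_11_{(1)})$, and the counit properties together with (\ref{1.2b'}) reduce the whole expression to $g(y_{(1)}\cdots)$.

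The catch is the order of multiplication in $H$. Because $\k(gh)=\k^1(h)\k^1(g)\o\k^2(g)\k^2(h)$, the action $\leftharpoonup$ is a right action, and it may force the transported action on the $H$-factor to be $h\cdot k$ rather than $k\cdot h$ in the $y_{(1)}$ slot. I will fix the convention for the $H$-action on $\textup{Hom}(H\overline{\o}A,M)$ so that this computation closes. Concretely, I will take $(g\cdot h)(k\o a)=g(hk\o a)$, which is a right action on $\textup{Hom}$ since left multiplication is a left action on $H\overline{\o}A$. This needs (\ref{2c}) to yield $y_{(1)}$ preceded by $h$, and if necessary I will instead apply the $S$-version of (\ref{2c}) to $\k^1$. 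Naturality in $M$ is clear from the formulas.
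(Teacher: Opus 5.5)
Your proposal is correct and follows the paper's argument: Stefan's identification $\textup{Hom}(A^e,M)^B\cong\textup{Hom}(A\o_BA,M)$, then precomposition with $\textup{can}^{-1}$, then the flip, with $H$ acting on $\textup{Hom}(H\overline{\o}A,M)$ by left multiplication on the $H$-factor, which is exactly your final convention $(g\cdot h)(k\o a)=g(hk\o a)$. The only difference is that the paper checks $H$-linearity of $\Phi$, which is immediate from (\ref{2f}) and (\ref{2a}); your check through $\Psi$ is longer but does close, since after (\ref{2c}) and (\ref{2d}) the identity $1_{(0)}\o\varepsilon(h_11_{(1)})h_2=1_{(0)}\o h1_{(1)}$ (from (\ref{1.2b'}) and $\Delta(h)\Delta(1)=\Delta(h)$) gives $g(hy_{(1)}\o xy_{(0)})$, so no $S$-version of (\ref{2c}) is needed.
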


\begin{proof}
First of all $\textup{Hom}(A^e,M)^B=H^0(B,\textup{Hom}(A^e,M))$ is a right $H$-module by Proposition \ref{lem:2c}. Explicitly for $f\in \textup{Hom}(A^e,M)^B,h\in H$,
$$(f\leftharpoonup h)(x\o y)=\sum f(x\k^1(h)\o \k^2(h)y).$$
As shown in \cite[Lemma 3.1]{Ste}, there is an isomorphism 
$$\a:\textup{Hom}(A^e,M)^B\rightarrow \textup{Hom}(A\o_B A,M), f\mapsto\a(f),\ \a(f)(x\o_B y)=f(x\o y).$$
By this identification $\textup{Hom}(A\o_B A,M)$ becomes a right $H$-module with the action given by
\begin{equation}\label{2g}
(f\leftharpoonup h)(x\o_B y)=\sum f(x\k^1(h)\o_B \k^2(h)y),
\end{equation}
and thus $\a$ is an isomorphism of $H$-modules. Apply the contravariant functor $\textup{Hom}(-,M)$ to the morphisms can$^{-1}$ and we get the isomorphism 
$$\textup{can}^{-1*}:\textup{Hom}(A\o_B A,M)\rightarrow \textup{Hom}( A\overline{\o}H,M).$$
Note that $\textup{Hom}(A\overline{\o} H,M)$ is a right $H$-module since $A\o H$ is a left $H$-module with the action on the second tensorand. For $f\in\textup{Hom}(A\o_B A,M),a\in A,g,h\in H$,
\begin{align*}
(\textup{can}^{-1*}(f)\cdot g)(a1_{(0)}\o h1_{(1)})&=\textup{can}^{-1*}(f)(a1_{(0)}\o gh1_{(1)})\\
&=f(\textup{can}^{-1*}(a1_{(0)}\o gh1_{(1)}))\\
&\stackrel{(\ref{2f})}{=}f\left(\sum a\k^1(gh)\o_B\k^2(gh)\right)\\
&\stackrel{(\ref{2a})}{=}f\left(\sum a\k^1(h)\k^1(g)\o_B\k^2(g)\k^2(h)\right)\\
&\stackrel{(\ref{2g})}{=}(f\leftharpoonup g)\left(\sum a\k^1(h)\o_B\k^2(h)\right)\\
&=((f\leftharpoonup g)\circ \textup{can}^{-1})(a1_{(0)}\o h1_{(1)})\\
&=\textup{can}^{-1*}(f\leftharpoonup g)(a1_{(0)}\o h1_{(1)}).
\end{align*}
Let $\sigma$ be the flip map, then $\sigma( A\overline{\o}H)=H\overline{\o} A$. Using the isomorphism $\textup{Hom}( A\overline{\o}H,M)\cong \textup{Hom}(H\overline{\o}A,M)$ of right $H$-modules, the proof is completed.
\end{proof}

\begin{proposition}\label{pro:2g}
Let $A/B$ be a restricted flat weak $H$-Galois extension. If $M$ is an injective $A^e$-module, then $\textup{Ext}^q_{H^{op}}(H_s,M^B)=0$ for all $q\geq1$.
\end{proposition}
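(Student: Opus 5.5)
We follow the strategy of \cite[Proposition 3.2]{Ste}: first reduce to coinduced modules, then use Lemma \ref{lem:2f} to identify $M^B$ with a module of the form $\textup{Hom}(H\overline{\o}A,N)$, and finally compute its Ext-groups with Proposition \ref{pro:2m}.

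\textbf{Reduction.} Every injective $A^e$-module $M$ is a direct summand of a coinduced module $\textup{Hom}(A^e,E)$ for some vector space $E$. Indeed, take $E=M$ viewed as a vector space; the map $M\rightarrow \textup{Hom}(A^e,M)$, $m\mapsto (x\o y\mapsto x\cdot m\cdot y)$, is an $A^e$-monomorphism, and it splits because $M$ is injective. Both $(-)^B=H^0(B,-)$ and its $H$-module structure are natural in $A^e$-linear maps, by Lemma \ref{lem:2c} and Proposition \ref{pro:2e}. Hence $M^B$ is an $H$-direct summand of $\textup{Hom}(A^e,E)^B$. Since $\textup{Ext}^q_{H^{op}}(H_s,-)$ is additive, it suffices to treat $M=\textup{Hom}(A^e,E)$.

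\textbf{Computation.} By Lemma \ref{lem:2f}, $\textup{Hom}(A^e,E)^B\cong \textup{Hom}(H\overline{\o}A,E)$ as right $H$-modules. The right $H$-action on the target comes from the left action of $H$ on the $H$-factor of $H\overline{\o}A$. Using the identification $H\overline{\o}A\cong H\o_{H_s}A$, or by direct adjunction, we aim to rewrite this as
$$\textup{Hom}(H\overline{\o}A,E)\cong \textup{Hom}_{H_s^{op}}(H,\textup{Hom}(A,E))\quad\text{or}\quad \textup{Hom}_{H_s^{op}}(W,X)$$
for a suitable right $H$-module $W$ and an injective right $H$-module $X$. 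One natural choice is $X=\textup{Hom}(H,E')$, the coinduced right $H$-module, which is injective. Proposition \ref{pro:2m}(1) then gives
$$\textup{Ext}^q_{H^{op}}(H_s,\textup{Hom}_{H^{op}_s}(W,X))\cong \textup{Ext}^q_{H^{op}}(H_s\overline{\o}W,X)=0\quad(q\ge1),$$
since $X$ is injective. Alternatively, Corollary \ref{coro:2n}(2) shows directly that $\textup{Hom}_{H^{op}_s}(W,X)$ has injective dimension $\le \textup{i.dim}(X)=0$, so it is itself an injective $H^{op}$-module, and the vanishing is immediate.

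\textbf{Main obstacle.} The hard step is the identification in the Computation paragraph: exhibiting $\textup{Hom}(H\overline{\o}A,E)$, with its specific $H$-action twisted through $\k$ and $S^{-1}$, as $\textup{Hom}_{H_s^{op}}(W,X)$ with the action of Lemma \ref{lem:2l}. Equivalently, one must show it is a direct summand of a coinduced module $\textup{Hom}(H,V)$. I would try to build an explicit $H$-linear split embedding $\textup{Hom}(H\overline{\o}A,E)\hookrightarrow \textup{Hom}(H\o A,E)\cong\textup{Hom}(H,\textup{Hom}(A,E))$. The embedding is given by precomposition with the idempotent projection $H\o A\to H\overline{\o}A$, $h\o a\mapsto h1_{(1)}\o a1_{(0)}$, which is left $H$-linear since $\Delta$ is multiplicative. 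The splitting is restriction along $H\overline{\o}A\subseteq H\o A$. The module $\textup{Hom}(H,\textup{Hom}(A,E))$, with the action induced from left multiplication on $H$, is injective over $H^{op}$ because $\textup{Hom}(H,-)$ is right adjoint to the exact forgetful functor. A direct summand of an injective module is injective, so $\textup{Ext}^q_{H^{op}}(H_s,M^B)=0$ for $q\ge1$.

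The restricted hypothesis enters through Lemma \ref{lem:2c}, which ensures the $H$-structure on $M^B$ is the one compatible with $\textup{Hom}_{H^{op}}(H_s,-)$. Flatness enters through Lemma \ref{lem:2f}.
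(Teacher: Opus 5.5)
Your argument in the final paragraph is correct and is essentially the paper's proof. The steps are the same: split $M$ off $\textup{Hom}(A^e,M)$, apply Lemma \ref{lem:2f}, and use the idempotent $h\o a\mapsto h1_{(1)}\o a1_{(0)}$ to retract onto the $\overline{\o}$-part. The paper applies this retraction after $\textup{Hom}_{H^{op}}(P_\bullet,-)$, exhibiting $\textup{Hom}(P_\bullet\overline{\o}A,M)$ as a retract of the complex $\textup{Hom}(P_\bullet\o A,M)$, which is exact in positive degrees. You apply it to the module itself, which gives the slightly stronger fact that $M^B$ is an injective right $H$-module.

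The tentative route through Proposition \ref{pro:2m} and Corollary \ref{coro:2n}(2) in your Computation paragraph is never completed and is not needed, so you can drop it.
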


\begin{proof}
Define a linear map $i:M\rightarrow\textup{Hom}(A^e,M)$ by $i(m)(x\o y)=x\cdot m\cdot y$. It is easy to check that $i$ is an injective morphism of $A^e$-modules. The fact that $M$ is an injective $A^e$-module guarantees the following decomposition  
$$\textup{Hom}(A^e,M)=M\oplus N,$$
as $A^e$-modules. Hence as right $H$-modules, we have 
$$\textup{Hom}(A^e,M)^B=M^B\oplus N^B.$$
Since the functor Ext preserves direct sums it suffices to show 
$$\textup{Ext}^q_{H^{op}}(H_s,\textup{Hom}(A^e,M)^B)=0.$$
Let $P_\bullet$ be an $H$-projective resolution of $H_s$ as right $H$-module. For any $n\geq1$, denote $P_n\overline{\o} A=\{p\cdot1_{(1)}\o a1_{(0)}\mid p\in P_n,a\in A\}$. Then by Lemma \ref{lem:2f},  $\textup{Ext}^q_{H^{op}}(H_s,\textup{Hom}(A^e,M)^B)$ is the $q$th cohomology group of the complex
$$\textup{Hom}_{H^{op}}(P_\bullet,\textup{Hom}(H\overline{\o}A,M))\cong\textup{Hom}(P_\bullet\o_HH\overline{\o}A,M)=\textup{Hom}(P_\bullet\overline{\o}A,M).$$
For any $n\geq1$ and the embedding map $i_n:P_n\overline{\o}A\hookrightarrow P_n\o A$, apply the functor $\textup{Hom}(-,M)$ to $i_n$ and we get an epimorphism 
$$i^*_n:\textup{Hom}(P_n\o A,M)\rightarrow\textup{Hom}(P_n\overline{\o}A,M).$$
Define a linear map $j^*_n:\textup{Hom}(P_n\overline{\o}A,M)\rightarrow\textup{Hom}(P_n\o A,M)$ by
$$j^*_n(f)(p\o a)=f(p\cdot 1_{(1)}\o a1_{(0)}),\quad f\in \textup{Hom}(P_n\overline{\o}A,M).$$
Then it is easy to check that $i^*_n\circ j^*_n$ is the identity map on $\textup{Hom}(P_n\overline{\o}A,M)$.

Consider the following diagram
$$\begin{CD}
\textup{Hom}(P_{n-1}\overline{\o}A,M) @>(d_n\o id)^*>> \textup{Hom}(P_n\overline{\o}A,M)\\
@VVj^*_{n-1}V @VVj^*_nV\\
\textup{Hom}(P_{n-1}\o A,M) @>(d_n\o id)^*>> \textup{Hom}(P_n\o A,M)\\
@VVi^*_{n-1}V @VVi^*_nV\\
\textup{Hom}(P_{n-1}\overline{\o}A,M) @>(d_n\o id)^*>> \textup{Hom}(P_n\overline{\o}A,M)
\end{CD}$$
For $\a\in\textup{Hom}(P_{n-1}\overline{\o}A,M)$ and $p\in P_n,a\in A$, 
\begin{align*}
(j^*_n\circ(d_n\o id)^*)(\a)(p\o a)&=j^*_n(((d_n\o id)^*)(\a))(p\o a)\\
&=((d_n\o id)^*)(\a))(p\cdot1_{(1)}\o a1_{(0)})\\
&=\a(d_n(p\cdot1_{(1)})\o a1_{(0)})\\
&=\a(d_n(p)\cdot1_{(1)}\o a1_{(0)}),
\end{align*}
and 
\begin{align*}
((d_n\o id)^*\circ j^*_{n-1})(\a)(p\o a)&=(d_n\o id)^*(j^*_{n-1}(\a))(p\o a)\\
&=j^*_{n-1}(\a)(d_n(p)\o a)\\
&=\a(d_n(p)\cdot1_{(1)}\o a1_{(0)}).
\end{align*}
Therefore $j^*_n\circ(d_n\o id)^*=(d_n\o id)^*\circ j^*_{n-1}$, and the upper block is commutative.

For $\b\in\textup{Hom}(P_{n-1}\o A,M)$,
\begin{align*}
&(i^*_n\circ(d_n\o id)^*)(\b)(p\cdot1_{(1)}\o a1_{(0)})\\
&=i^*_n(((d_n\o id)^*)(\b))(p\cdot1_{(1)}\o a1_{(0)})\\
&=((d_n\o id)^*)(\b))(p\cdot1_{(1)}\o a1_{(0)})\\
&=\b(d_n(p\cdot1_{(1)})\o a1_{(0)})\\
&=\b(d_n(p)\cdot1_{(1)}\o a1_{(0)}),
\end{align*}
and
\begin{align*}
&((d_n\o id)^*\circ i^*_{n-1})(\b)(p\cdot1_{(1)}\o a1_{(0)})\\
&=(d_n\o id)^*(i^*_{n-1}(\b))(p\cdot1_{(1)}\o a1_{(0)})\\
&=i^*_{n-1}(\b)(d_n(p)\cdot1_{(1)}\o a1_{(0)})\\
&=\b(d_n(p)\cdot1_{(1)}\o a1_{(0)}).
\end{align*}
Therefore  $i^*_n\circ(d_n\o id)^*=(d_n\o id)^*\circ i^*_{n-1}$, and the lower block is commutative. Hence $i^*=\{i^*_n\}$ and $j^*=\{j^*_n\}$ are chain maps, and $H^n(i^*_n)\circ H^n(j^*_n)=H^n(i^*_n\circ j^*_n)=id$, which means that $H^n(i^*_n)$ is an epimorphism. Since the $k$-complex $\textup{Hom}(P_\bullet\o A,M)$ is exact, $H^q\textup{Hom}(P_\bullet\o A,M)=0$, and thus $H^q\textup{Hom}(P_\bullet\overline{\o}A,M)=0$.
The proof is completed.
 
\end{proof}

\begin{theorem}\label{thm:2h}
Let $A/B$ be a restricted flat weak $H$-Galois extension. Then for an $A^e$-module $M$ there is a spectral sequence
$$\textup{Ext}^p_{H^{op}}(H_s,H^q(B,M))\Longrightarrow H^{p+q}(A,M),$$
which is natural in $M$.
\end{theorem}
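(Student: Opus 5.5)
This is a Grothendieck spectral sequence for a composite of functors. Consider the functors
$$G=H^0(B,-)\circ R:\ {}_A\mathcal{M}_A\rightarrow \mathcal{M}_H,\quad M\mapsto M^B,$$
where $M^B$ carries the right $H$-action $\leftharpoonup$ of Lemma \ref{lem:2b}(a), and
$$F=\textup{Hom}_{H^{op}}(H_s,-):\mathcal{M}_H\rightarrow\mathcal{M}_k.$$
By Lemma \ref{lem:2c}, $F\circ G\cong H^0(A,-)=\textup{Hom}_{A^e}(A,-)$, naturally in $M$. Both $F$ and $G$ are left exact: $G$ is a fixed-point functor, and $F$ is a Hom functor. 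The category of $A^e$-modules has enough injectives. So Grothendieck's theorem yields a spectral sequence
$$E_2^{p,q}=(R^pF)(R^qG(M))\Longrightarrow R^{p+q}(F\circ G)(M)=H^{p+q}(A,M),$$
natural in $M$, provided $G$ sends injective $A^e$-modules to $F$-acyclic objects. We have $R^pF=\textup{Ext}^p_{H^{op}}(H_s,-)$, so two things remain to check.

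\textbf{Acyclicity.} Let $M$ be an injective $A^e$-module. We need $\textup{Ext}^p_{H^{op}}(H_s,M^B)=0$ for $p\geq1$. This is exactly Proposition \ref{pro:2g}, which uses the restricted and flat hypotheses. So this condition is already available.

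\textbf{Identifying $R^qG$.} We must identify $R^qG(M)$ with $H^q(B,M)$ as right $H$-modules, with the action constructed in Proposition \ref{pro:2e}. I expect this to be the main obstacle, since the $H$-action on $H^q(B,M)$ was defined only abstractly. I would argue via universal $\delta$-functors. By Lemma \ref{lem:2d}, since $A$ is flat on both sides over $B$, the functor $H^\bullet(B,-)\circ R$ is cohomological and coeffaceable. It is therefore a universal $\delta$-functor on ${}_A\mathcal{M}_A$, and so its underlying vector spaces coincide with $R^qG$ after forgetting the $H$-action.

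To compare the actions, take an injective resolution $M\to I^\bullet$ in ${}_A\mathcal{M}_A$. Then $R^qG(M)=H^q((I^\bullet)^B)$, where the $H$-action is induced termwise by $\leftharpoonup$, which commutes with the differentials because they are $A^e$-linear. For each $h\in H$, right multiplication by $h$ is then a morphism of $\delta$-functors extending $\rho^h_0$. By the uniqueness in Proposition \ref{pro:2e} (universality), it agrees with $\rho^h_\bullet$. Hence $R^qG(M)\cong H^q(B,M)$ as right $H$-modules, naturally in $M$.

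Substituting into $E_2$ gives
$$\textup{Ext}^p_{H^{op}}(H_s,H^q(B,M))\Longrightarrow H^{p+q}(A,M).$$
Naturality in $M$ follows from the functoriality of the Grothendieck spectral sequence, together with the naturality statements in Lemma \ref{lem:2c} and Proposition \ref{pro:2e}.
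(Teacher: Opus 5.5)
Your proposal is correct and follows essentially the same route as the paper. Both use the factorization $H^0(A,-)\cong \textup{Hom}_{H^{op}}(H_s,-)\circ(-)^B$ from Lemma \ref{lem:2c}, the acyclicity of $M^B$ for injective $M$ from Proposition \ref{pro:2g}, and the Grothendieck spectral sequence for the composite. The only variation is in identifying $R^q(-)^B$ with $H^q(B,-)$. The paper notes that injective $A^e$-modules stay injective over $B^e$ by flatness, citing Stefan's Lemma 2.2. You instead use universality of the coeffaceable $\delta$-functor of Lemma \ref{lem:2d}, and you also check that the two $H$-actions agree, a point the paper leaves implicit.
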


\begin{proof}
Let $F,F_1,F_2$ be the following three functors
\begin{align*}
&F:\! _A\mathcal{M}_A\rightarrow \mathcal{M}_k,\quad F(-)=\textup{Hom}_{A^e}(A,-),\\
&F_1:\mathcal{M}_H\rightarrow\mathcal{M}_k,\quad F_1(-)=\textup{Hom}_{H^{op}}(H_s,-),\\
&F_2:\! _A\mathcal{M}_A\rightarrow\mathcal{M}_H,\quad F_2(-)=\textup{Hom}_{B^e}(B,-)\cong(-)^B.
\end{align*}
From Lemma \ref{lem:2c}, $F_1\circ F_2=F$. Proposition \ref{pro:2g} asserts that for any injective $A^e$-module $M$, $F_1(M)$ is $F_2$-acycle. Applying the Grothendieck spectral sequence for the composition of functors yields the convergent spectral sequence
$$(R^pF_1)(R^qF_2(M))\Longrightarrow R^{p+q}F(M),$$
where $R^nF_1,R^nF_2,R^nF$ are the $n$th right derived functor of $F_1,F_2$ and $F$. And by definitions 
$$R^pF_1(-)=\textup{Ext}^p_{H^{op}}(H_s,-),\quad R^pF(-)=H^p(A,-).$$
It follows from the proof of \cite[Lemma 2.2]{Ste} that every injective object in $_A\mathcal{M}_A$ remains injective when regarded as an object of $_B\mathcal{M}_B$. Let $I^\bullet$ be an $A^e$-injective resolution of $M$, then $I^\bullet$ is also a $B^e$-injective resolution of $M$. Consequently $R^qF_2(M)=H^q(B,M).$ The proof is completed.
\end{proof}

Recall that an algebra $A$ is separable if it is projective as $A^e$-module, or equivalently, for any bimodule $M$ and $q\geq1$,
$\textup{Ext}^q_{A^e}(A,M)=H^q(A,M)=0.$

\begin{theorem}\label{thm:2i}
Let $A/B$ be a restricted flat weak $H$-Galois extension. If $H$ is semisimple and $B$ is separable, then $A$ is separable.
\end{theorem}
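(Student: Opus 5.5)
We will deduce the statement directly from the spectral sequence of Theorem \ref{thm:2h}. Fix an arbitrary $A^e$-module $M$ and a degree $n\geq1$; the goal is to show $H^n(A,M)=0$. Theorem \ref{thm:2h} gives a convergent first-quadrant spectral sequence
$$E_2^{p,q}=\textup{Ext}^p_{H^{op}}(H_s,H^q(B,M))\Longrightarrow H^{p+q}(A,M).$$
It therefore suffices to show that $E_2^{p,q}=0$ whenever $p+q=n\geq1$. Since $H^{n}(A,M)$ has a finite filtration whose graded pieces are subquotients of the $E_2^{p,q}$ with $p+q=n$, this vanishing forces $H^n(A,M)=0$.

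We handle the two directions separately. For $q\geq1$, separability of $B$ gives $H^q(B,M)=\textup{Ext}^q_{B^e}(B,M)=0$, where $M$ is regarded as a $B^e$-module via restriction. Hence $E_2^{p,q}=0$ for all $p$ and all $q\geq1$. For $q=0$ and $p\geq1$, semisimplicity of $H$ makes every right $H$-module projective; in particular the right $H$-module $H_s$ (with action $y\cdot h=\varepsilon_s(yh)$) is projective. Therefore $\textup{Ext}^p_{H^{op}}(H_s,M^B)=0$ for $p\geq1$. Combining the two cases, the only possibly nonzero term is $E_2^{0,0}$, so the sequence collapses and $H^n(A,M)\cong E_2^{n,0}\oplus\cdots=0$ for $n\geq1$. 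Since $M$ was arbitrary, $A$ is projective as an $A^e$-module, that is, $A$ is separable.

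The only point needing care, and the step we expect to be the main obstacle, is the claim that semisimplicity of $H$ yields vanishing of $\textup{Ext}^{\geq1}_{H^{op}}$. Semisimplicity is normally stated for left modules, so we must check it also holds on the right. For a finite-dimensional algebra, or more generally any ring, left and right semisimplicity coincide (Wedderburn–Artin). Hence every right $H$-module is projective, and in particular $H_s$ is a projective right $H$-module. If one prefers to stay within the paper's framework, Corollary \ref{coro:2n}(3) gives $\textup{rgl.dim}(H)=\textup{p.dim}(H_s)$, and this dimension is $0$ when $H$ is semisimple.

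No further hypotheses beyond those of Theorem \ref{thm:2h} (restricted and flat) are needed. Those hypotheses are used only to guarantee that the spectral sequence exists.
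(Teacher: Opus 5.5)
Your proof is correct and takes essentially the paper's approach. The paper omits the argument and defers to Stefan's Theorem 3.6, which is exactly this collapse of the spectral sequence of Theorem \ref{thm:2h}: $H^q(B,M)=0$ for $q\geq 1$ by separability of $B$, and $\textup{Ext}^p_{H^{op}}(H_s,M^B)=0$ for $p\geq 1$ because $H_s$ is a projective right $H$-module when $H$ is semisimple, so $H^n(A,M)=0$ for all $n\geq 1$.
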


\begin{proof}
The proof is similar to that of \cite[Theorem 3.6]{Ste} and omitted.
\end{proof}

Let $H$ be a finite dimensional weak Hopf algebra, and $A/B$ a right $H$-Galois extension. Then $A$ is a left $H^*$-module algebra and we obtain the weak smash product algebra $A\# H^*$. According to \cite[Corollary 3.1]{NWZ} there exists an algebra isomorphism 
\begin{equation}\label{2j}
A\# H^*\cong \textup{End}_BA,
\end{equation}
where $A$ is viewed as a right $A$-module, and $A\# H^*/A$ is $H^*$-Galois extension.

\begin{lemma}\label{lem:2k}
Let $H$ be a weak Hopf algebra, and $A$ a left $H$-module algebra.  Let $B=A^H$; then we have an anti-algebra isomorphism $\textup{End}_{A\# H}(A)\cong B$.
\end{lemma}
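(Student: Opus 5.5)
We prove the statement by the standard argument: evaluation at $1_A$ identifies $A\#H$-endomorphisms of $A$ with invariants. Here $A$ is a left $A\# H$-module via $(a\# h)\cdot x=a(h\cdot x)$. Recall that in the weak setting
$$A\# H=\{a(1_1\cdot 1_A)\o h1_2\}\subseteq A\o H,\qquad B=A^H=\{x\in A\mid h\cdot x=\varepsilon_t(h)\cdot 1_A\,x \ \text{for all } h\in H\}.$$
One may use any of the standard equivalent forms of $A^H$, for instance $h\cdot x=(h\cdot 1_A)x$. We consider the map
$$\Phi:\textup{End}_{A\# H}(A)\rightarrow B,\qquad \Phi(f)=f(1_A),$$
and show that it is a well-defined anti-algebra isomorphism.

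\emph{Step 1: well-definedness.} For $f\in\textup{End}_{A\# H}(A)$ and $h\in H$, we use that $1_A\# h$ acts as $h\cdot-$. Then
$$h\cdot f(1_A)=f(h\cdot 1_A)=f\big((h\cdot 1_A)\# 1_H\cdot 1_A\big)=(h\cdot 1_A)f(1_A).$$
Here $h\cdot 1_A=\varepsilon_t(h)\cdot 1_A$, and the element $(h\cdot 1_A)\#1_H$ acts by left multiplication by $h\cdot 1_A$. Hence $f(1_A)\in A^H$. The delicate point is to write these elements correctly inside $A\# H$, i.e.\ with the idempotent $1_1\cdot 1_A\o 1_2$ inserted. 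We will check that $a\#1_H$ acts as left multiplication by $a$, using $1_1\cdot 1_A\,a\o 1_2$ and the module algebra axiom $1_1\cdot 1_A\, 1_2\cdot x = x$.

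\emph{Step 2: structure of $f$, injectivity, surjectivity.} Since $a\#1_H$ acts as left multiplication, $f(a)=f(a\cdot 1_A)=a f(1_A)$. So $f$ is right multiplication by $b=f(1_A)$, and $\Phi$ is injective. Conversely, given $b\in B$, define $r_b(x)=xb$. Left $A$-linearity is clear. For $H$-linearity we compute
$$h\cdot(xb)=(h_1\cdot x)(h_2\cdot b)=(h_1\cdot x)(\varepsilon_t(h_2)\cdot 1_A)\,b.$$
This equals $(h\cdot x)b$ by the weak module algebra identity $(h_1\cdot x)(\varepsilon_t(h_2)\cdot 1_A)=h\cdot x$, which follows from $h_1\o\varepsilon_t(h_2)=1_1h\o 1_2$ (the analogue of (\ref{1b})) together with $x=(1_1\cdot x)(1_2\cdot 1_A)$. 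This is the main obstacle, and it is settled by choosing the right characterization of $A^H$. Hence $r_b\in\textup{End}_{A\#H}(A)$ and $\Phi(r_b)=b$.

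\emph{Step 3: anti-multiplicativity.} We have $\Phi(f\circ g)=f(g(1_A))=g(1_A)f(1_A)=\Phi(g)\Phi(f)$, and $\Phi(\textup{id})=1_A$. Therefore $\Phi$ is an anti-algebra isomorphism.
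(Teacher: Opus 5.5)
Your proof is correct and is essentially the paper's argument: the same map $\Phi(f)=f(1_A)$, the same inverse $b\mapsto(x\mapsto xb)$, and the same identity $h_1\o\varepsilon_t(h_2)=1_1h\o 1_2$ to show that right multiplication by $b\in B$ is $H$-linear. The one difference is cosmetic: you check $f(1_A)\in B$ using left $A$-linearity and the form $h\cdot x=(h\cdot 1_A)x$, whereas the paper uses $H$-linearity and the form $h\cdot x=\varepsilon_t(h)\cdot x$; these agree because $z\cdot x=(z\cdot 1_A)x$ for $z\in H_t$. (Also, in the standard realization of $A\#H$ the idempotent form is $a(1_1\cdot 1_A)\o 1_2h$ rather than $h1_2$, but your argument never uses this.)
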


\begin{proof}
Define $\Phi:\textup{End}_{A\# H}(A)\rightarrow B$ by
$$\Phi(f)=f(1),\quad\forall f\in\textup{End}_{A\# H}(A).$$
For all $h\in H$, $h\cdot f(1)=f(h\cdot 1)=f(\varepsilon_t(h)\cdot 1)=\varepsilon_t(h)\cdot f(1)$, thus $\Phi$ is well defined. For $f,g\in\textup{End}_{A\# H}(A)$,
$$\Phi(fg)=f(g(1))=g(1)f(1)=\Phi(g)\Phi(f),$$
which means that $\Phi$ is an anti-algebra map. Now define $\Psi:B\rightarrow\textup{End}_{A\# H}(A)$ by
$$\Psi(b)(a)=ab,$$
for all $a\in A,b\in B$. Since for $a,a'\in A,h\in H$,
\begin{align*}
(a\# h)\cdot \Psi(b)(a')&=(a\# h)\cdot a'b=a(h\cdot a'b)\\
&=a(h_1\cdot a')(h_2\cdot b)=a(h_1\cdot a')(\varepsilon_t(h_2)\cdot b)\\
&=a(1_1h\cdot a')(1_2\cdot b)=a((h\cdot a')b)\\
&=\Psi(b)((a\# h)\cdot a'),
\end{align*}
that is, $\Psi(b)\in\textup{End}_{A\# H}(A)$ and $\Psi$ is well defined. Finally it is straightforward to check that $\Phi$ and $\Psi$ are mutual inverses. The proof is completed.
\end{proof}

Let $H$ be a weak Hopf algebra and $A$ a left $H$-module algebra. Then $A\# H/A$ is a weak Hopf-Galois extension \cite{NWZ}, with the Galois map
$$\textup{can}:A\# H\o_AA\# H\rightarrow A\# H\overline{\o}H,\ (a\# h)\o_A(b\# g)\mapsto (a\# h)(b\# g_1)\overline{\o} g_2,$$
and its inverse
$$\textup{can}^{-1}:A\# H\overline{\o}H\rightarrow A\# H\o_AA\# H,\ (a\# h)\overline{\o } g\mapsto (a\# h)(1\# S(g_1))\o_A (1\# g_2).$$
Now assume that $\Delta(1)=\Delta^{op}(1)$,  then for all $h\in H$, $\varepsilon_s(h)\in H_s$, and
\begin{align*}
\k(\varepsilon_s(h))&=\textup{can}^{-1}(1\#1_1\overline{\o} \varepsilon_s(h)1_2)\\
&=(1\#1_1)(1\# S(\varepsilon_s(h)_11_2))\o (1\# \varepsilon_s(h)_21_3)\\
&=(1\# 1_1 S(\varepsilon_s(h)_11_2))\o_A (1\# \varepsilon_s(h)_21_3)\\
&=(1\# \varepsilon_t(1_1) S(\varepsilon_s(h)_1))\o_A (1\# \varepsilon_s(h)_21_2)\\
&\stackrel{(\ref{1a})}{=}(1\# \varepsilon_t(1_1) S(1'_1))\o_A (1\# \varepsilon_s(h)1'_21_2)\\
&=(1\# \varepsilon_t(1_1) \varepsilon_t(1'_1))\o_A (1\# \varepsilon_s(h)1'_21_2)\\
&=(1\cdot\varepsilon_t(1_1) \varepsilon_t(1'_1)\#1)\o_A (1\cdot\varepsilon_s(h)1'_21_2\#1).
\end{align*}
Therefore $\k(\varepsilon_s(h))\in A\o_AA,$ namely the extension is restricted.

\begin{corollary}
Let $H$ be a finite dimensional semisimple and cosemisimple weak Hopf algebra such that $\varepsilon(xy)=\varepsilon(yx)$ for $x,y\in H$, and $A/B$ a  restricted flat weak $H$-Galois extension. If $A$ is separable, so is $B$.
\end{corollary}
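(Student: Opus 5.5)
The plan is to apply Theorem \ref{thm:2i} to the extension $A\# H^*/A$, and then pass from $A\# H^*$ back to $B$ through a Morita argument based on the isomorphism (\ref{2j}) and Lemma \ref{lem:2k}.

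\emph{Step 1: check the hypotheses of Theorem \ref{thm:2i}.} Since $H$ is finite dimensional and cosemisimple, $H^*$ is a semisimple weak Hopf algebra. The condition $\varepsilon(xy)=\varepsilon(yx)$ makes $\varepsilon$ a trace, and I expect it to give $\Delta_{H^*}(1)=\Delta_{H^*}^{op}(1)$. The reason is that $\Delta_{H^*}(1_{H^*})$ is dual to $x\o y\mapsto\varepsilon(xy)$. Once this holds, the computation preceding the corollary shows that $A\# H^*/A$ is a restricted weak $H^*$-Galois extension. It is flat over $A$ because $A\# H^*$ is a direct summand of $A\o H^*$, which is finitely generated free over $A$ on either side; I will use $H_t$-semisimplicity to split the idempotent $1\#1$. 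Theorem \ref{thm:2i} then shows that $A\# H^*$ is separable, since $A$ is separable and $H^*$ is semisimple.

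\emph{Step 2: transfer separability from $A\# H^*$ to $B$.} By (\ref{2j}), $A\# H^*\cong \textup{End}_B(A)$. I want $A_B$ to be a finitely generated projective generator, so that $\textup{End}_B(A)$ is Morita equivalent to $B$. Separability is a Morita invariant, because $\textup{gl.dim}$ of the enveloping algebra, or equivalently Hochschild cohomology, is preserved under Morita equivalence. A cleaner route is Lemma \ref{lem:2k} applied to the $A\#H^*$-module $A$: it gives $B\cong \textup{End}_{A\# H^*}(A)^{op}$, using $A^{H^*}=A^{coH}=B$. It then suffices to show that $A$ is a finitely generated projective generator over $A\# H^*$.

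\emph{Step 3: show $A$ is f.g.\ projective over $A\#H^*$.} Since $A\#H^*$ is separable, it is semisimple when finite dimensional; in general, every module over a separable algebra is projective, because $M\cong (A\#H^*)\o_{A\#H^*}M$ is a summand of $(A\#H^*)\o M$. So $A$ is a projective $A\#H^*$-module. It is cyclic, generated by $1$, hence finitely generated. Then $\textup{End}_{A\#H^*}(A)=eMe$ for some idempotent $e$ in a matrix algebra over the separable algebra $A\#H^*$, and corner rings and matrix rings of separable algebras are separable. This avoids needing the generator property, since $eRe$ is separable whenever $R$ is, via the separability idempotent projected by $e$. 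So $B^{op}$ is separable, hence $B$ is separable.

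The main obstacle is Step 1. One must verify that $\varepsilon$ being a trace gives the condition $\Delta(1)=\Delta^{op}(1)$ for $H^*$ used in the restrictedness computation. One must also check the flatness of $A\#H^*$ over $A$ in the weak setting, where $A\#H^*=(A\o H^*)(1\#1)$ is only a summand of the free module.
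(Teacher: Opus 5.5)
Your proposal is correct, and its first two steps match the paper's. The trace condition gives $\Delta_{H^*}(\varepsilon)=\Delta^{op}_{H^*}(\varepsilon)$, so $A\# H^*/A$ is restricted by the computation preceding the corollary. Theorem \ref{thm:2i}, with $H^*$ semisimple, then makes $A\# H^*$ separable, and Lemma \ref{lem:2k} gives $\textup{End}_{A\# H^*}(A)\cong B^{op}$.

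The routes diverge only at the last step. The paper treats $\textup{End}_{A\# H^*}(A)$ as the commutant in $\textup{End}_k(A)$ of $C=A\# H^*/\mathrm{ann}(A)$. It gets semisimplicity of $A\# H^*$ from Zhou's Corollary 3.6, uses that separable algebras are finite dimensional, and repeats Stefan's centralizer argument (his Theorem 3.9) inside the matrix algebra $\textup{End}_k(A)$. You stay in the module category of $R=A\# H^*$. Separability makes the cyclic module $A=R\cdot 1$ projective, so $A\cong Re$ for an idempotent $e$, and $B\cong\textup{End}_R(A)^{op}\cong eRe$. Your route is more elementary: it needs neither Zhou's result nor the centralizer theorem. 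It also gives the sharper statement that $B$ is a corner of $A\# H^*$. The price is that you need corners of separable algebras to be separable.

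That fact is true, but not for the reason you give. For a non-central idempotent, $\sum_i ex_ie\o ey_ie$ need not be a separability idempotent of $eRe$: for $R=M_2(k)$, $p=\sum_j E_{j1}\o E_{1j}$ and $e=E_{22}$, it is $0$. Two correct justifications:
\begin{itemize}
\item Over a field, $R$ is separable iff $R\o_k K$ is semisimple for every field extension $K$. Since $(eRe)\o_k K=(e\o 1)(R\o_k K)(e\o 1)$ is a corner of a semisimple ring, it is semisimple.
\item $eRe$ is Morita equivalent to $ReR=Rf$ with $f$ a central idempotent, and projecting $p$ by a central idempotent does work.
\end{itemize}

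Your concern about flatness of $A\# H^*$ over $A$ is legitimate; the paper's proof does not address it. However, the right $A$-action $(a\# f)b=a(f_1\cdot b)\# f_2$ is not right multiplication on the first tensorand of $A\o H^*$. So the right-hand half of your sketch needs a twisted identification of $A\# H^*$ with a summand of the free right module $H^*\o A$, using the bijective antipode.
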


\begin{proof}
Using $\varepsilon(xy)=\varepsilon(yx)$, we have $\Delta_{H^*}(\varepsilon)=\Delta^{*op}_{H^*}(\varepsilon)$ in $H^*$, and in the $H^*$-Galois extension $A\# H^*/A$, $\k(H^*)\subseteq A\o_AA.$

Since $H$ is cosemisimple, $H^*$ is semisimple. From Theorem \ref{thm:2i} $A\# H^*$ is separable because $A$ is separable. By Lemma \ref{lem:2k}, we have an anti-algebra isomorphism $\textup{End}_{A\# H^*}(A)\cong B$; hence
in order to prove $B$ separable, we suffice to prove $\textup{End}_{A\# H^*}(A)$ separable.

$\textup{End}_{A\# H^*}(A)$ is the commutator of $C=A\# H^*/ann_{A\# H^*} (A)$ in $\textup{End}(A)$. Using \cite[Corollary 3.6]{Zhou} $A\# H^*$ is semisimple. Remake that any separable algebra over a field is finite dimensional \cite[p. 48]{Ingra},  and hence we deduce that $A\# H^*$ is finite dimensional. By a similar argument as in the proof of \cite[Theorem 3.9]{Ste}, we complete the proof.
\end{proof}

By Corollary \ref{coro:2n}, we get the following result immediately.

\begin{corollary}
Let $A/B$ be a restricted flat weak $H$-Galois extension. Then $\textup{p.dim}_{A^e}(A)\leq \textup{p.dim}_H(H_s)+\textup{p.dim}_{B^e}(B)=\textup{rgl.dim}(H)+\textup{p.dim}_{B^e}(B)$.
\end{corollary}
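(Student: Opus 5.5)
The plan is to read the inequality off the spectral sequence of Theorem \ref{thm:2h}, and then use Corollary \ref{coro:2n}(3) for the equality. Put $n=\textup{p.dim}_H(H_s)$, meaning the projective dimension of $H_s$ as a right $H$-module, which is the module appearing in Theorem \ref{thm:2h}. Put $m=\textup{p.dim}_{B^e}(B)$. We may assume both are finite, since otherwise there is nothing to prove. Recall that $\textup{p.dim}_{A^e}(A)\le N$ holds exactly when $H^{j}(A,M)=\textup{Ext}^{j}_{A^e}(A,M)=0$ for every $A^e$-module $M$ and every $j>N$. So the goal is to show that $H^j(A,M)=0$ for all $j>n+m$ and all $A^e$-modules $M$.

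Fix an $A^e$-module $M$. Theorem \ref{thm:2h} applies because the extension is restricted and flat, and gives a convergent first-quadrant spectral sequence
$$E_2^{p,q}=\textup{Ext}^p_{H^{op}}(H_s,H^q(B,M))\Longrightarrow H^{p+q}(A,M).$$
We check that $E_2^{p,q}$ vanishes outside the rectangle $0\le p\le n$, $0\le q\le m$. For $p>n$ it vanishes because $H_s$ has a projective resolution of length $n$ over $H^{op}$. For $q>m$ it vanishes because $H^q(B,M)=\textup{Ext}^q_{B^e}(B,R(M))=0$. The spectral sequence is first-quadrant and convergent, so $H^j(A,M)$ carries a finite filtration whose subquotients are the $E_\infty^{p,q}$ with $p+q=j$. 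Each $E_\infty^{p,q}$ is a subquotient of $E_2^{p,q}$. If $j>n+m$, every pair $(p,q)$ with $p+q=j$ has $p>n$ or $q>m$, so every such term is zero and $H^j(A,M)=0$. This proves $\textup{p.dim}_{A^e}(A)\le n+m$.

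For the equality $\textup{p.dim}_H(H_s)=\textup{rgl.dim}(H)$ we invoke Corollary \ref{coro:2n}(3), which says that $\textup{rgl.dim}(H)=\textup{p.dim}(_{H^{op}}H_s)$. Here $H_s$ carries the right action $y\cdot h=\varepsilon_s(yh)$, which is the same module that appears in the spectral sequence.

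The one point needing care is this identification of modules: the ``$\textup{p.dim}_H(H_s)$'' in the statement has to be read as the projective dimension of the right $H$-module $H_s$. Everything else is the standard edge argument for a bounded first-quadrant spectral sequence, together with the fact that restriction of scalars turns $M$ into a $B^e$-module, so that the bound on $\textup{p.dim}_{B^e}(B)$ applies.
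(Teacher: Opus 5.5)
Your proposal is correct and is essentially the paper's argument: the inequality comes from the spectral sequence of Theorem \ref{thm:2h}, whose $E_2$-page vanishes outside the rectangle $0\le p\le \textup{p.dim}(H_s)$, $0\le q\le \textup{p.dim}_{B^e}(B)$, and the equality is Corollary \ref{coro:2n}(3). Your reading of $\textup{p.dim}_H(H_s)$ as the projective dimension of the right $H$-module $H_s$ is the one the paper intends.
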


Recall from \cite{Zhu}, let $M$ and $N$ be two right modules over an algebra $R$. Let $P_{\bullet}$ denote a projective resolution of $_{R^e}R$.
Given that the sequence $\cdots \to P_1 \to P_0 \to R \to 0$ is split exact as $R$-module complex, then we obtain the following isomorphism
\begin{equation}
\textup{Ext}^i_{R^e}(R, \textup{Hom}(M, N)) \cong \textup{Ext}^i_{R^{op}}(M, N),\label{2o}
\end{equation}
for all $i \geq 0$.

\begin{lemma}\label{lem:2p}
Let $A/B$ be a restricted flat weak $H$-Galois extension. For any right $A$-modules $M$ and $N$, there is a convergent spectral sequence
$$ \textup{Ext}^{p}_{H^{op}}(H_s, \textup{Ext}^q_{B^{op}}(M, N))\Longrightarrow \textup{Ext}^{p+q}_{A^{op}}(M, N).$$
\end{lemma}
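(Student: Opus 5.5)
Apply Theorem \ref{thm:2h} to the $A^e$-module $\textup{Hom}(M,N)$ and then use the isomorphism (\ref{2o}) to convert Hochschild cohomology into one-sided Ext over $A$ and over $B$. This follows Zhu's argument in the Hopf case.

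First, for right $A$-modules $M,N$, the space $\textup{Hom}(M,N)$ is an $A$-bimodule via $(a\cdot f\cdot a')(m)=f(m\cdot a)\cdot a'$. Theorem \ref{thm:2h} then gives a convergent spectral sequence
$$\textup{Ext}^p_{H^{op}}(H_s,H^q(B,\textup{Hom}(M,N)))\Longrightarrow H^{p+q}(A,\textup{Hom}(M,N)),$$
natural in the coefficient module.

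Second, identify both ends by (\ref{2o}), which requires the projective resolution of $R$ over $R^e$ to be split exact as a complex of one-sided $R$-modules. The bar resolution of $A$ over $A^e$ (resp. of $B$ over $B^e$) has a contracting homotopy $a_0\o\cdots\o a_n\mapsto 1\o a_0\o\cdots\o a_n$. This homotopy is right linear, so the hypothesis holds for $R=A$ and $R=B$. Note that $H^q(B,-)$ is computed by Ext over $B^e$, since $H^q(B,-)=\textup{Ext}^q_{B^e}(B,-)$ on $B^e$-modules, and Theorem \ref{thm:2h} already identified $R^qF_2$ with it via restriction. We then get
$$H^n(A,\textup{Hom}(M,N))\cong\textup{Ext}^n_{A^{op}}(M,N),\qquad H^q(B,\textup{Hom}(M,N))\cong\textup{Ext}^q_{B^{op}}(M,N).$$
Substituting these into the spectral sequence gives the claim.

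The main obstacle is the $H$-module structure on the $E_2$-term. The right $H$-action on $H^q(B,\textup{Hom}(M,N))$ comes from Proposition \ref{pro:2e}, by extending the action $\leftharpoonup$ on $H^0$ via coeffaceability. We must transport it to $\textup{Ext}^q_{B^{op}}(M,N)$ and say that this transported action is the one meant in the statement. Concretely, the isomorphism (\ref{2o}) for $B$ is natural in $N$. So we can take an injective resolution $N\to I^\bullet$ of right $A$-modules; since $A$ is flat over $B$, these remain injective right $B$-modules. The action on $\textup{Hom}_{B^{op}}(M,I^j)=\textup{Hom}(M,I^j)^B$ is then $f\leftharpoonup h=\sum \k^1(h)\cdot f\cdot\k^2(h)$, that is, $(f\leftharpoonup h)(m)=\sum f(m\k^1(h))\k^2(h)$. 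This is well defined by (\ref{2b}), and it induces the action on cohomology.

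We still need to check that $\textup{Hom}(M,I^j)$ is $F_2$-acyclic in the sense of Proposition \ref{pro:2g}, i.e.\ that it gives the same result as the $A^e$-injective resolution. Both constructions are universal $\delta$-functor extensions of the same $H^0$-level action (Lemma \ref{lem:2d}), so they agree. We then conclude the spectral sequence with $E_2^{p,q}=\textup{Ext}^p_{H^{op}}(H_s,\textup{Ext}^q_{B^{op}}(M,N))$ converging to $\textup{Ext}^{p+q}_{A^{op}}(M,N)$.
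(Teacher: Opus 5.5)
Your proposal is correct and follows essentially the paper's proof. Like the paper, you apply Theorem \ref{thm:2h} to the $A^e$-module $\textup{Hom}(M,N)$, use (\ref{2o}) for $R=B$ and $R=A$, and let $\textup{Ext}^q_{B^{op}}(M,N)$ carry the $H$-action transported through (\ref{2o}). Your last paragraph is extra and not needed, and the acyclicity it uses is simply $\textup{Ext}^q_{B^{op}}(M,I^j)=0$ for $q\geq 1$, which is not the content of Proposition \ref{pro:2g}.
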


\begin{proof}
Using the isomorphism (\ref{2o}), we have
$$\textup{Ext}^i_{B^e}(B, \textup{Hom}(M, N)) \cong \textup{Ext}^i_{B^{op}}(M, N),\forall i \geq 0,$$
which means that $\textup{Ext}^i_{B^{op}}(M, N)$ is also a right $H$-module via the isomorphism. Therefore by Theorem \ref{thm:2h} there exists a convergent spectral sequence
\begin{align*}
\textup{Ext}^{p}_{H^{op}}&(H_s, \textup{Ext}^q_{B^{op}}(M, N))\cong \textup{Ext}^{p}_{H^{op}}(H_s,\textup{Ext}^q_{B^e}(B, \textup{Hom}(M, N)))\\
 &\Longrightarrow \textup{Ext}^{p+q}_{A^e}(A, \textup{Hom}(M, N)) \cong \textup{Ext}^{p+q}_{A^{op}}(M, N),
 \end{align*}
 as desired.
\end{proof}

\begin{proposition}
Let $A/B$ be a restricted flat weak $H$-Galois extension. Then $\textup{rgl.dim}(B) \leq\textup{rgl.dim}(H) +\textup{rgl.dim}(A)$.
\end{proposition}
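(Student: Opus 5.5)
The idea is to bound $\textup{Ext}^n_{B^{op}}(N,Y)$ for all right $B$-modules $N,Y$ by building a spectral sequence whose $E_2$-terms are $\textup{Ext}^p_{H^{op}}(H_s,-)$ applied to $\textup{Ext}^q_{A^{op}}$-groups. Such a sequence has the opposite orientation to Lemma \ref{lem:2p}. Then $\textup{Ext}^n_{B^{op}}(N,Y)=0$ for $n>\textup{rgl.dim}(H)+\textup{rgl.dim}(A)$, and hence $\textup{p.dim}(N_B)\le \textup{rgl.dim}(H)+\textup{rgl.dim}(A)$ for every $N$. We may assume both dimensions on the right are finite, otherwise there is nothing to prove. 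By Corollary \ref{coro:2n}(3), $\textup{Ext}^p_{H^{op}}(H_s,-)=0$ for $p>\textup{rgl.dim}(H)$.

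\textbf{Step 1: pass from $B$ to $A$ by induction and coinduction.} For a right $B$-module $Y$, consider the coinduced right $A$-module $\textup{Hom}_B(A,Y)$. Because $A$ is flat as a left $B$-module, $\textup{Hom}_B(A,-)$ is exact on injectives and sends injective $B$-modules to injective $A$-modules. Likewise $-\o_BA$ is exact and sends projectives to projectives. The derived tensor--hom adjunction therefore gives, for all $n\ge 0$,
$$\textup{Ext}^n_{A^{op}}(N\o_BA,X)\cong \textup{Ext}^n_{B^{op}}(N,X),\qquad \textup{Ext}^n_{A^{op}}(X',\textup{Hom}_B(A,Y))\cong\textup{Ext}^n_{B^{op}}(X',Y)$$
for right $A$-modules $X,X'$. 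Consequently every group $\textup{Ext}^q_{B^{op}}(N\o_BA,Y)\cong\textup{Ext}^q_{A^{op}}(N\o_BA,\textup{Hom}_B(A,Y))$ vanishes for $q>\textup{rgl.dim}(A)$.

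\textbf{Step 2: recover $N$ from $N\o_BA$ through $H$-invariants.} Define the functor
$$F_2(Y)=\textup{Hom}_{B}(N\o_BA,Y)\cong\textup{Hom}_{A}(N\o_BA,\textup{Hom}_B(A,Y)).$$
Equip it with a right $H$-action obtained by transporting the action (\ref{2g}) of Lemma \ref{lem:2f} through the translation map. Concretely, $h$ acts on $\textup{Hom}_B(A,Y)$, viewed as a subspace of $\textup{Hom}(A\o_BA,\cdot)$, by inserting $\sum\k^1(h)\o_B\k^2(h)$. Using (\ref{2a}) and (\ref{2b}), as in Lemma \ref{lem:2b}, this is well defined and associative. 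I then want the analogue of Lemma \ref{lem:2c}:
$$\textup{Hom}_{H^{op}}(H_s,F_2(Y))\cong\textup{Hom}_B(N,Y).$$
The proof should parallel that lemma. First identify $\textup{Hom}_{H^{op}}(H_s,-)$ with the elements satisfying $f\leftharpoonup h=f\leftharpoonup\varepsilon_s(h)$. Then show via (\ref{2d}) and (\ref{2e}), and using the restricted hypothesis $\k(H_s)\subseteq B\o_BB$, that these are exactly the maps that factor through $N\o_BA\to N$ evaluated at $1_A$, i.e.\ maps coming from $\textup{Hom}_B(N,Y)$.

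\textbf{Step 3: Grothendieck spectral sequence.} With $F_1=\textup{Hom}_{H^{op}}(H_s,-)$, Step 2 gives $F_1\circ F_2=\textup{Hom}_B(N,-)$. Exactly as in Proposition \ref{pro:2g}, I need $F_2$ to send injective $B$-modules $I$ to $F_1$-acyclic $H$-modules. For this I would embed $I$ into $\textup{Hom}(A,I)$ and reuse the splitting argument with the maps $i^*_n,j^*_n$ built from the projective resolution $P_\bullet\overline{\o}A$ of that proof. The Grothendieck spectral sequence then reads
$$\textup{Ext}^p_{H^{op}}\bigl(H_s,\textup{Ext}^q_{B^{op}}(N\o_BA,Y)\bigr)\Longrightarrow\textup{Ext}^{p+q}_{B^{op}}(N,Y).$$
By Step 1 and Corollary \ref{coro:2n}(3), the $E_2$-page vanishes outside $0\le p\le\textup{rgl.dim}(H)$ and $0\le q\le\textup{rgl.dim}(A)$. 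This yields the claim.

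The main obstacle is Step 2. One must check that the $\k$-twisted action on $\textup{Hom}_B(N\o_BA,Y)$ really lands in $B$-linear maps, since only the left factor $\k^1$ can be absorbed into $A$ while $\k^2(h)$ must be pushed into $\textup{Hom}_B(A,Y)$. One must also check that its $H_s$-fixed part is exactly $\textup{Hom}_B(N,Y)$; it is here that restrictedness is essential. I would first try to verify this on free modules $N=B^{(I)}$, where $N\o_BA=A^{(I)}$ and the statement reduces to $\textup{Hom}_{H^{op}}(H_s,\textup{Hom}_B(A,Y))\cong Y$. I would then extend to arbitrary $N$ by a free presentation and left exactness.
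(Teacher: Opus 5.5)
\textbf{There is a genuine gap: the inequality as printed is false, so no argument can establish it.} It is a misprint with $A$ and $B$ interchanged. The paper gives no separate proof. The proposition sits directly after Lemma \ref{lem:2p}, and what that lemma yields is $\textup{rgl.dim}(A)\le\textup{rgl.dim}(H)+\textup{rgl.dim}(B)$. Indeed, the $E_2$-term $\textup{Ext}^p_{H^{op}}(H_s,\textup{Ext}^q_{B^{op}}(M,N))$ vanishes for $p>\textup{p.dim}(H_s)=\textup{rgl.dim}(H)$ (Corollary \ref{coro:2n}(3)) and for $q>\textup{rgl.dim}(B)$. Hence $\textup{Ext}^n_{A^{op}}(M,N)=0$ for $n>\textup{rgl.dim}(H)+\textup{rgl.dim}(B)$.

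Here is a counterexample to the printed direction. Let $\textup{char}\,k=p>0$, let $G=\langle g\rangle$ be cyclic of order $p$, and let $H=k^G$. This is an ordinary Hopf algebra, so $H_s=k1$, $\k(1)=1\o_B1$, and restrictedness is automatic. It is semisimple, so $\textup{rgl.dim}(H)=0$. Let $A=M_p(k)$, with $g$ acting by conjugation by the cyclic permutation matrix $\sigma$. Since $\sigma$ is the companion matrix of $x^p-1=(x-1)^p$, its centralizer is $B=A^{coH}=k[\sigma]\cong k[y]/(y^p)$, and $A$ is free of rank $p$ over $B$ on both sides. Moreover $\textup{can}\bigl(\sum_j e_{jj}\o_B\sigma^{-m}e_{jj}\sigma^{m}\bigr)=1\o e_{g^m}$. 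So the left $A$-linear map $\textup{can}$ is surjective, hence bijective, since both sides have dimension $p^3$. Thus $A/B$ is a restricted faithfully flat $H$-Galois extension with $\textup{rgl.dim}(H)=\textup{rgl.dim}(A)=0$ but $\textup{rgl.dim}(B)=\infty$.

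The same example shows where your Step 2 fails. Here $F_1=\textup{Hom}_{H^{op}}(H_s,-)$ is exact because $H$ is semisimple. Every right $A$-module is a sum of copies of the row space $k^p$, which is free of rank one over $B$. So $N\o_BA$ is $B$-projective and $F_2=\textup{Hom}_B(N\o_BA,-)$ is exact. Hence $F_1\circ F_2$ is exact whatever $H$-action you choose, while $\textup{Hom}_B(k,-)$ is not. So the identification $\textup{Hom}_{H^{op}}(H_s,F_2(Y))\cong\textup{Hom}_B(N,Y)$ cannot hold, and restrictedness does not rescue it.

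Conceptually, there is no natural $B$-linear map $N\o_BA\to N$ to factor through. A $B$-bimodule retraction $A\to B$ requires something like a total integral, i.e.\ cosemisimplicity of $H$. Passing from $A$-data back to $B$-data goes through $H$-\emph{coinvariants} (the equivalence $\mathcal{M}_B\simeq\mathcal{M}^H_A$), so any reverse bound is controlled by $H^*$, not by $H$. For instance, for a finite-dimensional Hopf algebra $H$ and faithfully flat $A/B$, $B$ is Morita equivalent to $A\# H^*\cong\textup{End}_BA$. Lemma \ref{lem:2p} applied to the $H^*$-Galois extension $A\# H^*/A$ then gives $\textup{rgl.dim}(B)\le\textup{rgl.dim}(H^*)+\textup{rgl.dim}(A)$. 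This is consistent with the example, since $kG$ has infinite global dimension. (A secondary issue: the coinduction isomorphism in Step 1 needs $A$ projective, not merely flat, over $B$.)
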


\section{AS-Gorenstein property of faithfully flat PI weak Hopf Galois extensions}
 \def\theequation{4.\arabic{equation}}
\setcounter{equation} {0}

\begin{definition}\cite{RWZ}
Let $A$ be a noetherian algebra. We say $A$ is  left AS-Gorenstein (Artin and Schelter Gorenstein) if
\begin{itemize}
  \item [(1)] $A$ has finite injective dimension $d$ as a left and right $A$-module;
  \item [(2)] for every finite dimensional left $A$-module $M$, $\textup{Ext}^i_A(M, A) = 0$ for all $i \neq d$ and $\textup{Ext}^d_A(M, A)$ is a finite-dimensional right module;
  \item [(3)] the right sided analog of (2) also holds.
\end{itemize}
	If further, $\textup{gl.dim}A = d < + \infty$, then $A$ is called {\it AS regular}.
\end{definition}

It is shown in \cite{RWZ} that if $A$ is an AS-Gorenstein algebra of dimension $d$, then $\textup{Ext}^d_A(M, A) \neq 0$ for any finite dimensional $A$-module $M$.

For any left $H$-modules $V$ and $W$, the natural map $\varphi_{W,V}:W\widehat{\o} V^*\rightarrow \textup{Hom}_{H_s}(V,W)$ given by
$$\varphi_{W,V}(w\widehat{\o} f)(v)=f(v)w,\quad w\in W,f\in V^*.$$
It is obvious that $\varphi_{W,V}$ is well defined and left $H$-linear. When $V$ is finite dimensional, define linear map $\psi_{W,V}:\textup{Hom}_{H_s}(V,W)\rightarrow W\widehat{\o} V^*$ by
$$\psi_{W,V}(g)=\sum g(1_1\cdot v_i)\o 1_2\cdot v^i,$$
where $g\in\textup{Hom}_{H_s}(V,W)$, and $\{v_i\}$ and $\{v^i\}$ is basis and dual basis of $V$. A straightforward verification shows that $\varphi_{W,V}$ is an isomorphism with the inverse $\psi_{W,V}$.

For  left $H$-modules $U$ and $W$, $\textup{Hom}_H(U,H)\overline{\o} W$ is a right $H$-module with the module action
$$(f\overline{\o} w)\leftharpoonup h=f\leftharpoonup h_1\overline{\o} S(h_2)\cdot w.$$
Define linear map $\theta:\textup{Hom}_{H}(U, H) \overline{\o}W \rightarrow\textup{Hom}_{H}(U, H\widehat{\o} W)$ by
$$\theta(f \overline{\o} w)(u)= f(u)_{1} \widehat{\o} f(u)_{2}\cdot w.$$

\begin{lemma}\cite[Lemma 3.4, 3.5]{RWZ}\label{lem:4b}
Let $H$ be a weak Hopf algebra. 
\begin{itemize}
  \item [(1)] Suppose that $V,W\in H$-Mod and that $V$ is finite-dimensional. Then, for any $i\geq0$, there is an isomorphism
  $$\textup{Ext}^i_H(W\widehat{\o} V,H)\cong\textup{Ext}^i_H(W,H\widehat{\o} V^*),$$
  as right $H$-modules.
  \item [(2)] Suppose that $U,W\in H$-Mod, and that either $U$ is finite-dimensional or that $U$ has a projective resolution by finitely generated projective modules. Then, for any $i\geq0$,
$$\textup{Ext}^i_H(U,H\widehat{\o} W)\cong \textup{Ext}^i_H(U,H)\overline{\o}W,$$
as right $H$-modules.
\end{itemize}
\end{lemma}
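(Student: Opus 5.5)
The statement is cited from \cite[Lemma 3.4, 3.5]{RWZ}. I would reprove both parts by establishing the isomorphism in degree $0$, checking that it is compatible with the right $H$-actions, and then passing to derived functors along a projective resolution in the first variable.

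For (1), I start from the left $H$-linear isomorphism $\varphi_{H,V}:H\widehat{\o}V^*\to \textup{Hom}_{H_s}(V,H)$ constructed just above. This needs $V$ finite dimensional, since that is what makes $\psi_{H,V}$ available. Combining it with Proposition \ref{pro:2m}(2), used with $X=H$, gives
$$\textup{Ext}^i_H(W\widehat{\o}V,H)\cong \textup{Ext}^i_H(W,\textup{Hom}_{H_s}(V,H))\cong \textup{Ext}^i_H(W,H\widehat{\o}V^*).$$
What remains is to check that these isomorphisms respect the right $H$-actions. On the left side the action comes from right multiplication on $H$. In the middle it comes from $(f\cdot h)(v)=f(v)h$, which commutes with the left action $(h\cdot f)(v)=h_1f(S(h_2)v)$. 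On the right side it comes from right multiplication on the $H$-tensorand. The adjunction maps $\Gamma,\Lambda$ of Lemma \ref{lem:2l} and the map $\varphi$ involve no right multiplication on $H$, so they commute with these actions. Naturality in $X=H$ then gives the equivariance on Ext, because $H$ acts on $H$ by left $H$-module endomorphisms (right multiplications).

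For (2), I first show that $\theta$ is a right $H$-linear isomorphism when $U=H$. In that case $\textup{Hom}_H(H,H)\cong H$, and $\theta$ becomes the map $H\overline{\o}W\to H\widehat{\o}W$, $x\overline{\o}w\mapsto x_1\widehat{\o}x_2w$. Its inverse should be $x\widehat{\o}w\mapsto x_1\overline{\o}S(x_2)w$, up to the idempotents $1_1,1_2$. I would verify this using $x_1S(x_2)=\varepsilon_t(x)$, (\ref{1b}) and (\ref{1c}). Both sides are additive in $U$, so $\theta$ is also an isomorphism for every finitely generated projective $U$.

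Next I take a resolution $P_\bullet\to U$ by finitely generated projectives, which exists by hypothesis or, when $U$ is finite dimensional, because $H$ is then taken noetherian/finite as in \cite{RWZ}. The maps $\theta$ give an isomorphism of complexes $\textup{Hom}_H(P_\bullet,H)\overline{\o}W\cong \textup{Hom}_H(P_\bullet,H\widehat{\o}W)$. The functor $-\overline{\o}W=-\o_{H_s}W$ is exact because $H_s$ is semisimple, so it commutes with cohomology. This yields $\textup{Ext}^i_H(U,H)\overline{\o}W\cong\textup{Ext}^i_H(U,H\widehat{\o}W)$.

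The main obstacle is the case of (2) where $U$ is merely finite dimensional, since there $P_\bullet$ need not be finitely generated. I would handle it by writing $U\cong$ a summand-type expression such as $H_t\widehat{\o}U$ and then using (1) together with Corollary 2.5-type arguments. If that fails, I would follow \cite{RWZ} and assume $H$ noetherian so that finite-dimensional modules are of type $\mathbf{FP}_\infty$. I also expect the idempotent bookkeeping in the inverse of $\theta$ to be the most error-prone computation.
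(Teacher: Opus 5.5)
You have (1) right, and (2) right whenever $U$ has a resolution by finitely generated projectives. This is the route the paper prepares: it only cites \cite{RWZ}, but it sets up $\varphi_{W,V}$, $\psi_{W,V}$ and $\theta$ for exactly this purpose. For (1) you apply Proposition \ref{pro:2m}(2) with $X=H$, follow it with $\varphi_{H,V}$, and get equivariance from naturality in $X$ under right multiplications. For (2) you show $\theta$ is an isomorphism at $U=H$, extend to finitely generated projectives by additivity, and pass to cohomology because $-\overline{\o}W$ is exact. Your inverse $x\widehat{\o}w\mapsto x_1\overline{\o}S(x_2)w$ is correct. The two composites reduce to the standard identity $x_1\o\varepsilon_t(x_2)=1_1x\o 1_2$ and to (\ref{1b}). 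The same identity gives right $H$-linearity of $\theta$.

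The genuine gap is the clause of (2) in which $U$ is only finite-dimensional, and your plan A does not close it. Writing $U\cong H_t\widehat{\o}U$ and applying (1) only yields
\[
\textup{Ext}^i_H(U,H\widehat{\o}W)\cong\textup{Ext}^i_H\bigl(H_t,H\widehat{\o}(W\widehat{\o}U^*)\bigr).
\]
That is the same problem for $U=H_t$, and it again needs $H_t$ to be of type $\mathbf{FP}_\infty$.

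Nor can any argument that produces the natural map close this clause without a finiteness hypothesis on $H$. Take the Hopf algebra $H=kG$ with $G$ free on $x_1,x_2,\dots$, take $U=k$, let $W$ be trivial with basis $w_1,w_2,\dots$, and set $i=1$.
\begin{itemize}
  \item[(a)] Here $H\widehat{\o}W\cong\bigoplus_j kG$, and $\theta$ induces the canonical map $\bigoplus_j H^1(G,kG)\to H^1(G,\bigoplus_j kG)$.
  \item[(b)] For free $G$ one has $\textup{Der}(G,M)\cong\prod_n M$.
  \item[(c)] Derivations coming from $\bigoplus_j\textup{Der}(G,kG)$ all take values in some fixed finite subsum $\bigoplus_{j\le J}kG$, and so do inner derivations.
  \item[(d)] Hence the class of the derivation $x_n\mapsto 1\o w_n$ is not in the image, and the natural map is not surjective.
\end{itemize}

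So your fallback is not optional. You must either add a hypothesis such as $H$ left noetherian, or restrict (2) to the $\mathbf{FP}_\infty$ case. Under the noetherian hypothesis, finite-dimensional modules are of type $\mathbf{FP}_\infty$ and your argument applies verbatim. The $\mathbf{FP}_\infty$ case is the only one this paper actually uses, in Corollary \ref{cor:4c}.
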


\begin{corollary}\label{cor:4c}
Let $V$ be finite dimensional H-module, and $U$ be an $H$-module of type $FP_\infty$. Then, for any $i\geq0$,
\begin{equation}
\textup{Ext}^i_H(U\widehat{\o}V,H)\cong \textup{Ext}^i_H(U,H)\overline{\o}V^*,\label{4a}
\end{equation}
as right $H$-modules, the right H-module structure on $\textup{Ext}^i_H(U,H)\overline{\o}V^*$ is given by
\begin{equation}
(x\overline{\o} f)\leftharpoonup h=x\leftharpoonup h_1\overline{\o} S(h_2)\cdot f,\label{4b}
\end{equation}
for all $x\in \textup{Ext}^i_H(U,H),f\in V^*,h\in H$.
\end{corollary}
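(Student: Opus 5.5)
The plan is to chain the two parts of Lemma \ref{lem:4b}. First, part (1), with $W=U$ and $V$ finite dimensional, gives an isomorphism of right $H$-modules
$$\textup{Ext}^i_H(U\widehat{\o}V,H)\cong\textup{Ext}^i_H(U,H\widehat{\o}V^*)$$
for every $i\geq0$. Next I apply part (2) with the roles renamed: $U$ stays $U$, and $W:=V^*$. This part needs $U$ to have a projective resolution by finitely generated projective modules, which is exactly what type $\mathbf{FP}_\infty$ provides. It then yields
$$\textup{Ext}^i_H(U,H\widehat{\o}V^*)\cong\textup{Ext}^i_H(U,H)\overline{\o}V^*$$
as right $H$-modules. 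Composing the two isomorphisms gives (\ref{4a}).

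The remaining task is to identify the right $H$-module structure on the target with (\ref{4b}). In the degree-zero situation the paper has already put the action $(f\overline{\o}w)\leftharpoonup h=f\leftharpoonup h_1\overline{\o}S(h_2)\cdot w$ on $\textup{Hom}_H(U,H)\overline{\o}W$, and $\theta$ is right $H$-linear for it. So I will argue that the isomorphism in Lemma \ref{lem:4b}(2) is obtained as follows. Take a resolution $P_\bullet\to U$ by finitely generated projectives. Apply $\theta$ termwise to
$$\textup{Hom}_H(P_\bullet,H)\overline{\o}V^*\rightarrow\textup{Hom}_H(P_\bullet,H\widehat{\o}V^*).$$
This map is an isomorphism for each $P_n$: it is one for $P=H$, and both sides commute with finite direct sums and direct summands. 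Taking cohomology, and using that $-\overline{\o}V^*=-\o_{H_s}V^*$ is exact because $H_s$ is semisimple, the action on $\textup{Ext}^i_H(U,H)\overline{\o}V^*$ is induced from the degree-zero one. That induced action is precisely (\ref{4b}), with the left $H$-module structure on $V^*$ being the one used in Lemma \ref{lem:4b}(1).

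The main obstacle is this bookkeeping of module structures. One must check that $\theta$ is compatible with the differentials and right $H$-linear, so that cohomology commutes with $-\overline{\o}V^*$ as right $H$-modules. One must also confirm that the $H$-action on $V^*$ implicit in part (1) matches the one meant in (\ref{4b}). I expect both checks to reduce to the identities (\ref{1a})--(\ref{1c}), which I will not carry out in detail.
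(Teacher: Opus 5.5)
Your proposal is correct and follows the paper's proof. The paper simply composes Lemma~\ref{lem:4b}(1), taken with $W=U$, and Lemma~\ref{lem:4b}(2), taken with $W=V^*$, where type $\mathbf{FP}_\infty$ supplies the resolution by finitely generated projectives. Your extra sketch of why the action is the one in (\ref{4b}) is not needed, since Lemma~\ref{lem:4b}(2) is already stated as an isomorphism of right $H$-modules for that action.
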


\begin{proof}
By Lemma \ref{lem:4b}, 
$$\textup{Ext}^i_H(U\widehat{\o}V,H)\stackrel{(\ref{4a})}{\cong} \textup{Ext}^i_H(U,H\widehat{\o}V^*)\stackrel{(\ref{4b})}{\cong}\textup{Ext}^i_H(U,H)\overline{\o}V^*.$$
\end{proof}

Let $M$ be a vector space and $N$ be a right $H$-comodule. Then $\textup{Hom}(M, N)$ is a left $H^*$-module, under the action
$$(h^* \rightharpoonup f) (m) = h^* \cdot f(m),$$
for all $h^* \in H^*$, $f \in \textup{Hom}(M, N),m \in M$.

\begin{lemma}\label{lem:4d}
	Let $A/B$ be a weak $H$-Galois extension, $M$ be a right $B$-module and $X \in \mathcal{M}^H_B$.
\begin{itemize}
  \item [(1)] If $M_B$ is finitely generated, then $\textup{Hom}_{B^{op}}(M, X)$ is an $H$-comodule.
  \item [(2)] If $M_B$ is finitely presented, then $\textup{Hom}_{B^{op}}(M, X)^{co H} =\textup{Hom}_{B^{op}}(M, X^{co H})$.
\end{itemize}
\end{lemma}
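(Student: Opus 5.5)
We will follow the classical argument for Hopf modules (as in \cite{Zhu}), adapted to the weak setting. Throughout, $X\in\mathcal{M}^H_B$ means a right $B$-module and right $H$-comodule with $(x\cdot b)_{(0)}\o(x\cdot b)_{(1)}=x_{(0)}\cdot b_{(0)}\o x_{(1)}b_{(1)}$. Since $B$ consists of coinvariants, $\r(b)=1_{(0)}b\o1_{(1)}$ by Lemma 1.1(3), so the compatibility reads $\r(xb)=x_{(0)}b\o x_{(1)}$ up to the idempotent factors $1_{(0)}\o 1_{(1)}$. In particular $\r:X\to X\o H$ is right $B$-linear, where $B$ acts on the first tensorand. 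This is the key fact used in both parts.

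For (1), choose an epimorphism $B^n\to M$ of right $B$-modules. Then $\textup{Hom}_{B^{op}}(M,X)$ embeds in $\textup{Hom}_{B^{op}}(B^n,X)\cong X^n$. We define the candidate coaction
$$\r(f)=\sum f(m)_{(0)}\o f(m)_{(1)},$$
viewed first as a map $M\to X\o H$. Because $M$ is finitely generated, $\textup{Hom}_{B^{op}}(M,X\o H)\cong\textup{Hom}_{B^{op}}(M,X)\o H$; this is checked on $B^n$ and passed to the quotient, using that $-\o H$ is exact. So we set $\r(f)\in\textup{Hom}_{B^{op}}(M,X)\o H$ to be the element corresponding to $\r_X\circ f$. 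Right $B$-linearity of $\r_X\circ f$ follows from the key fact. Coassociativity and counitality then follow from those of $X$, applied pointwise. This is the $H$-comodule structure compatible with the $H^*$-action $h^*\rightharpoonup f$ described above.

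For (2), $f$ is coinvariant exactly when $\r(f)=f_{(0)}\o\varepsilon_t(f_{(1)})$, by Lemma 1.1(2) applied pointwise. By the definition of $\r(f)$ this says $\r(f(m))=f(m)_{(0)}\o\varepsilon_t(f(m)_{(1)})$ for every $m$, that is, $f(M)\subseteq X^{coH}$. Hence $\textup{Hom}_{B^{op}}(M,X)^{coH}=\textup{Hom}_{B^{op}}(M,X^{coH})$ as subspaces of $\textup{Hom}_{B^{op}}(M,X)$. Here $X^{coH}$ is a $B$-submodule because $\r$ is $B$-linear and $\varepsilon_t$ is compatible with multiplication by the coinvariant idempotents.

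The main obstacle is the finiteness step in (1): justifying $\textup{Hom}_{B^{op}}(M,X\o H)\cong\textup{Hom}_{B^{op}}(M,X)\o H$ for $M$ not free. The map is injective when $M$ is finitely generated. It is an isomorphism when $M$ is finitely presented, via the five lemma applied to $B^m\to B^n\to M\to0$ and left exactness of $\textup{Hom}$ together with exactness of $-\o H$. This explains why (2) assumes finite presentation. For (1) with only finite generation, we would instead show directly that $\r_X\circ f$ lands in the image, by writing $f$ through $B^n$ and using that the relevant coefficients lie in a finite-dimensional subcomodule of $X^n$. The weak-specific step to check carefully is the placement of the idempotents $1_{(0)}\o1_{(1)}$, using \eqref{1.2b} and \eqref{1.2b'}.
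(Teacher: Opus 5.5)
Your proof is correct. For (1) it takes a different route from the paper.

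\textbf{The shared computation.} Both arguments rest on the fact that, for $b\in B$,
\[
\rho(x\cdot b)=x_{(0)}\cdot b\otimes x_{(1)}.
\]
The idempotent factors coming from $\rho(b)=1_{(0)}b\otimes 1_{(1)}$ disappear because $x_{(0)}\cdot 1_{(0)}\otimes x_{(1)}1_{(1)}=\rho(x\cdot 1)=\rho(x)$. You should state this step explicitly rather than point to the identities for $\rho(1)$.

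\textbf{The paper's route.} The paper uses this fact to show that $\textup{Hom}_{B^{op}}(M,X)$ is stable under the action $(h^*\rightharpoonup f)(m)=h^*\cdot f(m)$ on $\textup{Hom}(M,X)$. It then invokes Zhu's rationality argument. Evaluation at generators $m_1,\dots,m_n$ embeds $\textup{Hom}_{B^{op}}(M,X)$ $H^*$-linearly into the comodule $X^n$, and an $H^*$-submodule of a comodule is a subcomodule. Part (2) is dismissed as ``similar''.

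\textbf{Your route.} You construct the coaction explicitly: $\rho(f)$ is the preimage of $\rho_X\circ f$ under
\[
\Phi:\textup{Hom}_{B^{op}}(M,X)\otimes H\to\textup{Hom}_{B^{op}}(M,X\otimes H).
\]
This avoids the rational-module machinery, gets coassociativity and counitality from injectivity of $\Phi$, and turns (2) into the pointwise check you give. The paper's argument is shorter only because it hands the comodule structure to a general coalgebra fact.

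\textbf{Correction to your finiteness step.} $\Phi$ is injective for every $M$: expand an element of the domain in a basis of $H$. It is already bijective when $M_B$ is merely finitely generated. With $B$ acting on the first factor, $X\otimes H\cong X^{(I)}$ as right $B$-modules for a basis $I$ of $H$. Also, $\textup{Hom}_{B^{op}}(M,-)$ commutes with direct sums when $M$ is finitely generated. Concretely, the $H$-components of the finitely many elements $\rho(f(m_j))$ span a finite-dimensional subspace $V_0\subseteq H$. Right $B$-linearity of $\rho_X$ then forces $\rho(f(M))\subseteq X\otimes V_0$.

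So the five-lemma detour is unnecessary, and finite presentation is not what surjectivity of $\Phi$ requires. Your argument for (2) uses only that $\rho(f)$ exists and that $\Phi$ is injective. It therefore proves (2) for finitely generated $M$ as well.
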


\begin{proof}
(1) For $f\in\textup{Hom}_{B^{op}}(M, X),m\in M,h^*\in H^*$ and $a\in B$, we have
\begin{align*}
(h^*\rightharpoonup f)(m\cdot a)&=h^*\rightharpoonup(f(m)\cdot a)\\
&=h^*((f(m)\cdot a)_{(1)}) (f(m)\cdot a)_{(0)}\\
&=h^*(f(m)_{(1)}a_{(1)}) f(m)_{(0)}\cdot a_{(0)}\\
&=h^*(f(m)_{(1)}1_{(1)}) f(m)_{(0)}\cdot 1_{(0)}a\\
&=h^*(f(m)_{(1)}) f(m)_{(0)}\cdot a\\
&=(h^*\rightharpoonup f)(m)\cdot a,
\end{align*}
hence $\textup{Hom}_{B^{op}}(M, X)$ is an $H^*$-submodule of $\textup{Hom}(M, X)$.  Then by a similar argument as in the proof of \cite[Lemma 2.5]{Zhu}, we have that $\textup{Hom}_{B^{op}}(M, X)$ is an $H$-comodule.

(2) The proof is similar to that of \cite[Lemma 2.5]{Zhu}. 

The proof is completed.
\end{proof}

\begin{proposition}\label{pro:4e}
Let $M$ be a right $A$-module which is finitely generated over $B$. For any $X\in\mathcal{M}^H_A$, $\textup{Hom}_{B^{op}}(M, X)$ naturally carries a Hopf module structure in $\mathcal{M}^H_H$.
\end{proposition}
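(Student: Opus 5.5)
We need to show that $\textup{Hom}_{B^{op}}(M,X)$ is a right $H$-comodule, a right $H$-module, and that the two structures satisfy the Hopf module compatibility $(f\cdot h)_{(0)}\o (f\cdot h)_{(1)}=f_{(0)}\cdot h_1\o f_{(1)}h_2$ of $\mathcal{M}^H_H$. The plan follows the proof of the analogous statement in \cite{Zhu}, now using the translation map $\k$ of the weak Galois extension.

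\emph{Comodule structure.} An object $X\in\mathcal{M}^H_A$ is in particular in $\mathcal{M}^H_B$, because $B=A^{coH}$ acts on $X$ compatibly with the coaction. Since $M$ is finitely generated over $B$, Lemma \ref{lem:4d}(1) applies and makes $\textup{Hom}_{B^{op}}(M,X)$ a rational $H^*$-submodule of $\textup{Hom}(M,X)$, hence a right $H$-comodule. Concretely, the coaction is determined by
$$f_{(0)}(m)\o f_{(1)}=f(m)_{(0)}\o f(m)_{(1)},$$
so that $h^*\rightharpoonup f=h^*(f_{(1)})f_{(0)}$. Rationality is where finite generation of $M_B$ enters.

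\emph{Module structure.} The right $H$-action should be
$$(f\cdot h)(m)=\sum f(m\cdot\k^1(h))\cdot\k^2(h).$$
This is the action of Lemma \ref{lem:2b}(a) transported to $\textup{Hom}(M,X)$, regarded as an $A$-bimodule via $(a\cdot f\cdot a')(m)=f(m\cdot a)\cdot a'$. The order is chosen so that $\textup{Hom}_{B^{op}}(M,X)=\textup{Hom}(M,X)^B$. With this identification:
\begin{itemize}
  \item independence of the representative of $\k(h)\in A\o_BA$ and $B$-linearity of $f\cdot h$ follow from (\ref{2b});
  \item associativity follows from (\ref{2a}), in the same way as in Lemma \ref{lem:2b}(a);
  \item unitality follows from $\k(1)=1\o_B1$.
\end{itemize}

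\emph{Compatibility.} For $m\in M$ we compute the coaction of $(f\cdot h)(m)=f(m\k^1(h))\k^2(h)$ using that $X$ is an $(A,H)$-Hopf module:
$$\big(f(m\k^1(h))\k^2(h)\big)_{(0)}\o\big(\cdots\big)_{(1)}=f(m\k^1(h))_{(0)}\k^2(h)_{(0)}\o f(m\k^1(h))_{(1)}\k^2(h)_{(1)}.$$
Identity (\ref{2c}) turns $\k^2(h)_{(0)}\o\k^2(h)_{(1)}$ into $\k^2(h_1)\o h_2$. The first factor is handled by definition of the coaction, $f(m\k^1(h_1))_{(0)}\o f(m\k^1(h_1))_{(1)}=f_{(0)}(m\k^1(h_1))\o f_{(1)}$. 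Combining gives
$$(f\cdot h)_{(0)}(m)\o(f\cdot h)_{(1)}=(f_{(0)}\cdot h_1)(m)\o f_{(1)}h_2.$$

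\emph{Main obstacle.} The danger is that (\ref{2c}) only holds inside $A\o_BA\o H$, while we need the identity after applying $f(m\,\cdot\,-)\cdot(-)$. To handle this, we first observe that for each fixed $m$ the map
$$A\o_BA\to X,\qquad x\o_By\mapsto f(mx)y,$$
is well defined, because $f$ is $B$-linear. We then tensor it with $H$ and apply it to both sides of (\ref{2c}). A secondary point is checking that $f\cdot h$ still lies in the rational part, so that the comodule structure is the one from Lemma \ref{lem:4d}. This is automatic once the displayed formula is established, since it exhibits the coaction explicitly.
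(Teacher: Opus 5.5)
Your proposal is correct and follows the paper's proof essentially step for step. You use the same action $(f\leftharpoonup h)(m)=\sum f(m\k^1(h))\k^2(h)$ with the module axioms from (\ref{2a}), the comodule structure from Lemma \ref{lem:4d}(1), and the same compatibility computation via (\ref{2c}); your well-definedness remarks add care that the paper omits.

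One small precision: tensoring $x\o_By\mapsto f(mx)y$ with $H$ does not carry the coaction on $f(m\k^1(h))$. You can fix this in either of two ways:
\begin{itemize}
\item first rewrite $f(mx)_{(0)}\o f(mx)_{(1)}=f_{(0)}(mx)\o f_{(1)}$, then apply your map to $f_{(0)}$ and multiply by $f_{(1)}$ in the $H$-factor; or
\item check directly that $x\o_By\o k\mapsto f(mx)_{(0)}y\o f(mx)_{(1)}k$ is $B$-balanced, using $\rho(b)=1_{(0)}b\o1_{(1)}$ for $b\in B$.
\end{itemize}
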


\begin{proof}
Define the right $H$-action on $\textup{Hom}_{B^{op}}(M, X)$ by
\begin{equation}
(f \leftharpoonup h) (m) = \sum f(m\cdot\k^1(h))\cdot\k^2(h),
\end{equation}
for all $f \in\textup{Hom}_{B^{op}}(M, X)$, $m \in M$, $h \in H$. It is straightforward to check that $\textup{Hom}_{B^{op}}(M, X)$ is a right $H$-module using (\ref{2a}). Then
\begin{align*}
 &(f \leftharpoonup h)_{(0)} (m) \o (f \leftharpoonup h)_{(1)}\\
 &= (f \leftharpoonup h)(m)_{(0)} \o (f \leftharpoonup h)(m)_{(1)}\\
 &=\sum f(m\cdot\k^1(h))_{(0)} \cdot\k^2(h)_{(0)} \o  f(m\cdot\k^1(h))_{(1)}\k^2(h)_{(1)}\\
 &\stackrel{(\ref{2c})}{=}\sum f (m\cdot\k^1(h_1))_{(0)}\cdot\k^2(h_1)\o f (m\cdot\k^1(h_1))_{(1)} h_2\\
 &=\sum f_{(0)} (m\cdot\k^1(h_1))\cdot\k^2(h_1)\o f_{(1)} h_2\\
 &=(f_{(0)} \leftharpoonup h_1) (m) \o f_{(1)} h_2.
\end{align*}
Therefore $\textup{Hom}_{B^{op}}(M, X)$ belongs to the category $\mathcal{M}^H_H$. The proof is completed.
\end{proof}

\begin{lemma}\label{lem:4f}
Let $M$ be a right $A$-module. If $M_B$ is of type $\mathbf{FP_{\infty}}$, then 
$$\textup{Ext}_{B^{op}}^i(M, A) \cong \textup{Ext}_{B^{op}}^i(M, B) \o_{H_t} H$$ 
as right $H$-modules for $i \geq 0$. Thus $\textup{Ext}_{B^{op}}^i(M, A)$ is a projective right $H$-module.
\end{lemma}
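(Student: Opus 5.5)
The plan is to follow the strategy of \cite[Lemma 2.6]{Zhu}: compute $\textup{Ext}^i_{B^{op}}(M,A)$ from a resolution of $M_B$ by finitely generated projective right $B$-modules, and apply the structure theorem for weak Hopf modules over $H$ termwise.

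First, $A$ is an object of $\mathcal{M}^H_A$, so $\textup{Hom}_{B^{op}}(M,A)$ carries a structure in $\mathcal{M}^H_H$ by Proposition \ref{pro:4e}. Here $M_B$ is finitely generated because it is of type $\mathbf{FP}_\infty$.

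Next, the fundamental theorem for weak Hopf modules (the $H$-Galois extension $H/H_t$ of the Remark in Section 3, via Proposition 2.3 of \cite{Caen2007}) gives, for every $N\in\mathcal{M}^H_H$, an isomorphism $N^{coH}\o_{H_t}H\cong N$ in $\mathcal{M}^H_H$, namely $n\o h\mapsto n\cdot h$. Combining this with Lemma \ref{lem:4d}(2) yields
$$\textup{Hom}_{B^{op}}(M,A)\cong \textup{Hom}_{B^{op}}(M,A)^{coH}\o_{H_t}H\cong\textup{Hom}_{B^{op}}(M,B)\o_{H_t}H,$$
using $A^{coH}=B$ and the fact that $M_B$ is finitely presented. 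This settles the case $i=0$.

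For general $i$, take a resolution $P_\bullet\to M$ by finitely generated projective right $B$-modules. The terms $P_n$ are not $A$-modules, so I cannot apply Proposition \ref{pro:4e} to them directly. I would handle this in one of two ways.

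\emph{Option 1 (preferred).} The $H$-action on $\textup{Ext}^i_{B^{op}}(M,A)$ comes from the identification $\textup{Ext}^i_{B^{op}}(M,A)\cong H^i(B,\textup{Hom}(M,A))$ of (\ref{2o}) together with Proposition \ref{pro:2e}. Meanwhile, the $H$-comodule structure on each $\textup{Hom}_{B^{op}}(P_n,A)$ comes from Lemma \ref{lem:4d}(1), since $A\in\mathcal{M}^H_B$. The differentials are colinear, so $\textup{Ext}^i_{B^{op}}(M,A)$ is an $H$-comodule. Since $\textup{Hom}_{B^{op}}(P_n,-)$ is exact and commutes with $(-)^{coH}$ by Lemma \ref{lem:4d}(2), the coinvariants of the complex $\textup{Hom}_{B^{op}}(P_\bullet,A)$ are $\textup{Hom}_{B^{op}}(P_\bullet,B)$. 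Taking coinvariants is exact in the relevant sense: it is split by the map $x\mapsto x_{(0)}\leftharpoonup S(x_{(1)})$ once the module structure is present. Hence $\textup{Ext}^i_{B^{op}}(M,A)^{coH}\cong\textup{Ext}^i_{B^{op}}(M,B)$.

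\emph{Option 2.} Replace $P_\bullet$ by $P_\bullet\o_BA$, which is a resolution of $M\o_BA$ by flatness, and use the $A$-module structure there.

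In either case, once $\textup{Ext}^i_{B^{op}}(M,A)$ is shown to lie in $\mathcal{M}^H_H$, the fundamental theorem gives the stated isomorphism as right $H$-modules.

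Finally, projectivity follows because $H_t$ is semisimple. Hence $\textup{Ext}^i_{B^{op}}(M,B)$ is a projective right $H_t$-module, so it is a direct summand of a free module $H_t^{(I)}$. Then $\textup{Ext}^i_{B^{op}}(M,B)\o_{H_t}H$ is a direct summand of $H^{(I)}$, and is therefore projective.

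The main obstacle is the middle step: showing that the $H$-module structure on $\textup{Ext}^i$ obtained through Proposition \ref{pro:2e}, which is defined abstractly via coeffaceable extension, agrees with the termwise $\leftharpoonup$ action of Proposition \ref{pro:4e}, and that the compatibility defining $\mathcal{M}^H_H$ passes to cohomology. My plan there is to verify that the termwise $\kappa$-action on $\textup{Hom}_{B^{op}}(P_\bullet\o_BA,A)$, a complex of Hopf modules, is a morphism of $\delta$-functors in $A$. Uniqueness of extensions of natural transformations of coeffaceable functors (Lemma \ref{lem:2d}) then forces it to coincide with the action of Proposition \ref{pro:2e}.
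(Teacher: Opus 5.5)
Your $i=0$ step and the projectivity argument at the end agree with the paper. The passage to $i\geq 1$, however, has a genuine gap.

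Resolving the \emph{first} variable by finitely generated projective right $B$-modules $P_\bullet$ gives the terms $C^n=\textup{Hom}_{B^{op}}(P_n,A)$ only a coaction (Lemma \ref{lem:4d}(1)). There is no $H$-action on them, because $(f\leftharpoonup h)(m)=\sum f(m\cdot\k^1(h))\cdot\k^2(h)$ needs $m\cdot\k^1(h)$. Taking coinvariants is only left exact on $H$-comodules. So without a splitting $x\mapsto x_{(0)}\leftharpoonup S(x_{(1)})$ defined on each $C^n$ and commuting with the differentials, you cannot conclude $H^i(C^\bullet)^{coH}\cong H^i((C^\bullet)^{coH})$. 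An $H$-action known only on cohomology (from Proposition \ref{pro:2e}) does not make the natural map $H^i((C^\bullet)^{coH})\to H^i(C^\bullet)^{coH}$ bijective. You also never verify that this action and the coaction induced from $P_\bullet$ satisfy the $\mathcal{M}^H_H$ compatibility.

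Option 2 does not repair this, for three reasons. First, $P_n\o_BA$ is not finitely generated over $B$, so $\textup{Hom}_{B^{op}}(P_n\o_BA,A)$ gets no coaction from Lemma \ref{lem:4d}. Second, its cohomology is not in general $\textup{Ext}^i_{B^{op}}(M,A)$, since the complex resolves $M\o_BA$ by modules that are merely flat over $B$. Third, on $\textup{Hom}_{A^{op}}(P_n\o_BA,A)\cong\textup{Hom}_{B^{op}}(P_n,A)$ the $\k$-action collapses to $f\leftharpoonup h=f\leftharpoonup\varepsilon_s(h)$ by (\ref{2d}).

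The missing idea is to resolve the coefficient $A$ rather than $M$, so that $M$ stays an $A$-module in every term. The paper proceeds as follows.
\begin{enumerate}
\item It constructs an exact complex $0\to A\to E^0\to E^1\to\cdots$ in $\mathcal{M}^H_A$ with $E^i\cong I^i\overline{\o}H$ and $I^i$ injective right $A$-modules, via the Hopf-module embedding $x\mapsto\iota(x_{(0)})\overline{\o}x_{(1)}$.
\item Each $E^i$ is $\textup{Hom}_{B^{op}}(M,-)$-acyclic: $I^i$ is $B$-injective by flatness of ${}_BA$, and the direct sums are controlled by the $\mathbf{FP}_\infty$ hypothesis via \cite[Lemma 2.7]{Zhu}.
\item $\textup{Hom}_{B^{op}}(M,E^\bullet)$ is then a complex in $\mathcal{M}^H_H$ by Proposition \ref{pro:4e}. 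Its termwise action is the action of Lemma \ref{lem:2b}(a) on $\textup{Hom}(M,E^i)^B$, and $\textup{Hom}(M,E^\bullet)$ is an $H^\bullet(B,-)$-acyclic resolution of the bimodule $\textup{Hom}(M,A)$. So comparing with the action of Proposition \ref{pro:2e} reduces to naturality.
\item Since $(-)^{coH}$ is exact on $\mathcal{M}^H_H$, it commutes with cohomology, and Lemma \ref{lem:4d}(2) moves coinvariants inside the Hom.
\item The equivalence $(F,G)$, which is where faithful flatness enters, identifies $(E^\bullet)^{coH}$ with an injective resolution of $B_B$. This yields $\textup{Ext}^i_{B^{op}}(M,B)\o_{H_t}H$.
\end{enumerate}
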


\begin{proof}
For any $X \in \mathcal{M}^H_A$, there is an injective right $A$-module $I$ with an $A$-module inclusion $\iota: X \hookrightarrow I$. Note that $I \overline{\o} H$ is a Hopf module in $\mathcal{M}^H_A$ via
$$(x \overline{\o} h)\cdot a = x\cdot a_{(0)}\overline{ \o} ha_{(1)}, \qquad \rho(x \overline{\o} h) =x \overline{\o} h_1 \o h_2,$$
where $x \in I$, $h \in H$ and $a \in A$. There is a linear embedding $\widetilde{\iota}: X \hookrightarrow I \overline{\o} H, \; x \mapsto \iota(x_{(0)}) \overline{\o} x_{(1)}$. For all $x \in X$ and $a \in A$, we have
\begin{align*}
&\widetilde{\iota}(x\cdot a) =\iota(x_{(0)}\cdot a_{(0)})\overline{ \o} x_{(1)}a_{(1)} = \iota(x_{(0)})\cdot a_{(0)}\overline{ \o} x_{(1)}a_{(1)}=\widetilde{\iota}(x)\cdot a ,\\
&\rho(\widetilde{\iota}(x)) = \rho(\iota(x_{(0)}) \overline{\o} x_{(1)}) =\iota(x_{(0)})\overline{ \o} x_{(1)1} \o x_{(1)2} =\widetilde{\iota}(x_{(0)}) \o x_{(1)},
\end{align*}
which implies that $\widetilde{\iota}$ is a morphism in $\mathcal{M}^H_A$. 

By induction, we could construct an exact complex 
$$0\rightarrow A\rightarrow E^0\rightarrow E^1\rightarrow \cdots$$ 
in $\mathcal{M}^H_A$ where each term $E^i$ is isomorphic to $I^i \overline{\o} H$ and $I^i$ is an injective $A$-module for any $i\geq0$.

Since $_{B}A$ is flat, any injective right $A$-module $I$ is also injective as a $B$-module. Consider the tensor product algebra $A\o H$. Since $1_{(0)}\o 1_{(1)}$ is the idempotent of $A\o H$, then $I\overline{\o} H$ is a direct summand of $I\o H$ as vector spaces, and consequently a direct summand of $I\o H$ as right $B$-module, where $B$ acts on the first tensorand of $I\o H$.

Moreover, it is obvious that $I\o H$ can be seen as a direct sum of copies of $I$ as $B$-modules, and hence $I\overline{\o} H$ can be seen as a direct sum of copies of a direct summand of $I$ as $B$-modules. With the help of \cite[Lemma 2.7]{Zhu}, $I\overline{\o} H$ is $\textup{Hom}_{A^{op}}(M, -)$-acyclic. Thus we obtain that for $i \geq 0$,
$$\textup{Ext}^i_{B^{op}}(M, A) \cong \mathrm{H}^i(\textup{Hom}_{B^{op}}(M, E^{\bullet})),$$
as right $H$-modules.

Since $\textup{Hom}_{B^{op}}(M, E^{\bullet})$ is a complex in $\mathcal{M}^H_H$ by Proposition \ref{pro:4e}, we obtain the following isomorphisms
\begin{align*}
\mathrm{H}^i(\textup{Hom}_{B^{op}}(M, E^{\bullet})) & \cong \mathrm{H}^i(\textup{Hom}_{B^{op}}(M, E^{\bullet}))^{co H} \o_{H_t} H  \\
& \cong \mathrm{H}^i(\textup{Hom}_{B^{op}}(M, E^{\bullet})^{co H}) \o_{H_t} H  \\
& \cong \mathrm{H}^i(\textup{Hom}_{B^{op}}(M, (E^{\bullet})^{co H})) \o_{H_t} H, 
\end{align*}
where the second isomorphism uses the fact that $(-)^{coH}$ is exact and the third uses Lemma \ref{lem:4d}. Since the functor $G$ is an equivalence, the sequence
$$0\rightarrow A = B^{co H}\rightarrow (E^0)^{co H}\rightarrow (E^1)^{co H}\rightarrow \cdots$$
is exact. As $F$ is the quasi-inverse of $G$, we have $(I^i \o_B A)^{co H} \cong I^i$ as $B$-modules.
Consider the composition of the following maps:
$$ I^i \o_B A \stackrel{\cong}{\rightarrow} I^i \o_A (A \o_B A) \stackrel{id_{I^i} \o \beta}{\rightarrow} I^i \o_A (A \overline{\o} H) \stackrel{\cong}{\rightarrow} I^i \overline{\o} H,$$
with the element $x \o_B a \mapsto  x\cdot a_{(0)}\overline{ \o} a_{(1)}$. This composition defines an isomorphism between $I^i \o_B A$ and $I^i \o H$ in $\mathcal{M}^H_A$. Thus, $(E^i)^{co H} = (I^i \overline{\o}H)^{co H} \cong (I^i \o_B A)^{co H} \cong I^i$ as right $B$-modules. Therefore, the complex $(E^\bullet)^{co H}$ constitutes an injective resolution of $B_B$. Then we derive the following isomorphism of right $H$-modules,
\begin{align*}
\textup{Ext}^i_{B^{op}}(M, A) &\cong H^i(\textup{Hom}_{B^{op}}(M, E^{\bullet}))\\
& \cong H^i(\textup{Hom}_{B^{op}}(M, (E^{\bullet})^{co H})) \o_{H_t} H \\
&\cong\textup{Ext}^i_{B^{op}}(M, B) \o_{H_t} H. 
\end{align*}
Since $H_t$ is semisimple, we have that $\textup{Ext}^i_{B^{op}}(M, B) \o_{H_t} H$ is a projective right $H$-module. The proof is completed.
\end{proof}

\begin{hypothesis}\cite{Zhu}\label{hyp:4g}
	Let $R$ be a noetherian affine PI algebra. For any $i \geq 0$, 
	\begin{enumerate}
		\item[(1)] $\textup{Ext}^i_{R^{op}}(U, R) \neq 0$ if and only if $\textup{Ext}^i_{R^{op}}(V, R) \neq 0$ for all simple $R$-modules $U$ and $V$, and 
		\item[(2)] $\textup{Ext}^i_{R^{op}}(-, R)$ is an exact functor on $R$-modules of finite length.
	\end{enumerate}
\end{hypothesis}

In \cite{WuZhang}, Wu and Zhang gave a useful criterion for noetherian affine PI algebras to be AS-Gorenstein.

\begin{theorem}\cite[Corollary 2.10, Propostion 3.2]{WuZhang}\label{thm:4h}
	If the Hypothesis \ref{hyp:4g} holds, then
	\begin{enumerate}
		\item [(1)] $\textup{inj.dim}(_RR) =\textup{inj.dim}(R_R) = d =\textup{Kdim}R < + \infty$;
		\item [(2)]  $R$ is AS-Gorenstein of dimension $d$.
	\end{enumerate}
\end{theorem}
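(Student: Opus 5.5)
Since this statement is imported from \cite{WuZhang}, the plan is to follow the strategy there: translate Hypothesis \ref{hyp:4g} into information about the minimal injective resolution of $R$ (on each side), and then use the good dimension theory of noetherian affine PI algebras to identify the relevant degree with $\textup{Kdim}R$. Throughout I use that over a noetherian affine PI algebra every simple module is finite dimensional, so finite-length modules are exactly the finite-dimensional ones. I also use that $\textup{Kdim}R=\textup{GKdim}R<\infty$ and that $R$ satisfies a second-layer / link condition. The latter makes the injective hulls $E(R/P)$, for $P$ prime, behave as in the commutative case.

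\textbf{Step 1 (a single degree $d$).} I would first show that there is exactly one $d$ with $\textup{Ext}^d_{R^{op}}(S,R)\neq0$ for all simples $S$, and that $\textup{Ext}^i_{R^{op}}(S,R)=0$ for $i\neq d$. By Hypothesis (1), the set $D=\{i\mid \textup{Ext}^i_{R^{op}}(S,R)\neq 0\}$ does not depend on $S$. It is nonempty by a grade argument: if every $\textup{Ext}^i(S,R)$ vanished, the exactness in Hypothesis (2) would kill $\textup{Ext}^\bullet(N,R)$ for all finite-length $N$, contradicting the existence of a nonzero map from some finite-length module into a suitable injective in a minimal resolution.

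To get a single degree, I take $i<j$ in $D$ and use the exact functors $\textup{Ext}^i(-,R)$ on finite-length modules. The bimodule structure of $\textup{Ext}^i(S,R)$, finite over the (PI) center, lets me pass to a finite-dimensional factor of $R$ at a maximal ideal. The duality induced by the exact contravariant functor $\textup{Ext}^i(-,R)$ then forces $i=j$.

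\textbf{Step 2 (finite injective dimension).} Next I would compute the minimal injective resolution $0\to R\to I^0\to I^1\to\cdots$ of $R_R$. Each $I^n$ is a direct sum of $E(R/P)$ with $P$ prime. An indecomposable $E(S)$ with $S$ simple occurs in $I^n$ exactly when $\textup{Hom}(S,I^n)\cong\textup{Ext}^n(S,R)\neq0$, so by Step 1 these summands occur only in degree $d$.

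For a non-maximal prime $P$, I would localize or pass to $R/P$ and argue by induction on $\textup{Kdim}(R/P)$, using the link condition, that $E(R/P)$ occurs only in degree $d-\textup{Kdim}(R/P)$. This gives $I^n=0$ for $n>d$. For the equality $d=\textup{Kdim}R$, I take a minimal prime $P$, for which $E(R/P)$ must appear in degree $0$; this gives $d=\textup{Kdim}(R/P)$. Taking $P$ of maximal coheight yields $d=\textup{Kdim}R$. The same argument on the left gives $\textup{inj.dim}(_RR)=d$, which proves (1).

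\textbf{Step 3 (AS-Gorenstein).} With (1) established, I would extend the vanishing $\textup{Ext}^i(S,R)=0$ for $i\neq d$ from simples to all finite-dimensional modules by induction on length, using the long exact sequence. Finite dimensionality of $\textup{Ext}^d(M,R)$ also follows by induction on length: $\textup{Ext}^d(S,R)$ is a finitely generated $R$-module killed by a maximal ideal of a finite centre-type subring, hence finite dimensional. Both sides are handled symmetrically, giving (2).

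\textbf{Main obstacle.} I expect the main obstacle to be Step 2's claim that non-maximal primes contribute in exactly the degree $d-\textup{Kdim}(R/P)$. This needs the noncommutative localization and link theory for PI rings, namely Brown–Warfield-type results that injective hulls of prime factors are controlled by cliques. If that route gets stuck, I would instead rely on the Auslander–Gorenstein and Cohen–Macaulay criteria for PI rings already available in the literature.
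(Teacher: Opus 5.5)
\paragraph{Overall.} The paper gives no proof of this statement; it quotes it from \cite{WuZhang}. Your outline therefore has to stand on its own, and it has a genuine gap at its first real step.

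\paragraph{Step 1 has no mechanism.} You need that only one degree $d$ carries nonzero $\textup{Ext}^d_{R^{op}}(S,R)$. But Hypothesis \ref{hyp:4g}(2) is a statement about each functor $\textup{Ext}^i_{R^{op}}(-,R)$ separately, and nothing you invoke relates two degrees $i<j$. Exactness of each functor on finite-length modules, the bimodule structure on $\textup{Ext}^i(S,R)$, and passing to a finite-dimensional factor of $R$ give no reason why both degrees cannot be nonzero. Over such a factor, Ext is simply a different functor. The sentence ``the duality induced by $\textup{Ext}^i(-,R)$ forces $i=j$'' is not an argument. (Degree-wise exactness suffices only when $\textup{Kdim}R=0$, where projective covers have finite length.)

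What is missing is a duality for the whole complex. Starting from the lowest degree $t$, one can show inductively, using $\textup{Ext}^t(M,R)\cong\textup{Hom}(M,H^t_{\mathfrak m}(R))$ and the local-cohomology spectral sequence, that exactness makes every local cohomology module $H^q_{\mathfrak m}(R)$ injective. Here $H^q_{\mathfrak m}(R)$ is the locally finite part of the minimal injective resolution. Then local duality over a semilocal completion (at a maximal ideal or its clique) shows that the dualizing complex $D$ has projective cohomology, and $\mathbf{R}\textup{Hom}(D,D)\cong R$ forces $D$ to be a single shifted invertible bimodule. This dualizing-complex and local-duality input is what \cite{WuZhang} relies on. Hypothesis \ref{hyp:4g}(1) then only makes the degree independent of the maximal ideal.

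\paragraph{Step 2 and the fallback.} Step 2 has the same character. The claim that $E(R/P)$, for $P$ non-maximal, occurs only in degree $d-\textup{Kdim}(R/P)$ is not a lemma on the way to the theorem; it is essentially the theorem. Finiteness of injective dimension for noetherian affine PI Hopf algebras is exactly the Brown--Goodearl question that \cite{WuZhang} answered. Your fallback is circular: the Auslander--Gorenstein and Cohen--Macaulay results for noetherian PI rings in the literature (e.g.\ Stafford--Zhang, Brown--Goodearl) take finite injective dimension as a hypothesis.

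\paragraph{Two further problems.} These remain even if Steps 1--2 were repaired.
\begin{itemize}
\item Hypothesis \ref{hyp:4g} concerns only right modules, so ``the same argument on the left'' has nothing to run on. You obtain no information about $\textup{inj.dim}(_RR)$ or the left-sided AS condition, and one-sided finiteness of injective dimension does not formally give the other side. In the dualizing-complex approach, two-sidedness comes for free because an invertible bimodule is two-sided.
\item In Step 3 (and already in Step 1), a noetherian affine PI algebra need not be module-finite over its centre. Being killed by a maximal ideal of a central subring therefore does not make $\textup{Ext}^d_{R^{op}}(S,R)$ finite dimensional; that needs a separate argument.
\end{itemize}
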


Using the above theorem, we immediately get the main result of this section.

\begin{theorem}
Let $H$ be a weak Hopf algebra, and $A$ is a noetherian affine PI algebra which is a restricted faithfully flat $H$-Galois extension of $B$.
	If $B$ is AS-Gorenstein, then $A$ is also AS-Gorenstein.
	Furthermore if $H$ is AS-Gorenstein of dimension $d_H$, then $\textup{inj.dim}(A_A) = \textup{inj.dim}(B_B) + d_H$. 
\end{theorem}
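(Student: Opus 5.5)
The plan is to verify Hypothesis \ref{hyp:4g} for $R=A$ and then invoke Theorem \ref{thm:4h}. Fix a finite-dimensional (hence finite length) right $A$-module $M$. Since $A$ is noetherian affine PI and $A$ is faithfully flat over $B$, I first argue that $B$ is noetherian. I also argue that $M_B$ is finitely generated and hence of type $\mathbf{FP}_\infty$. Concretely, $M$ has finite length over $B$, because $M$ is finite dimensional and $B$ is noetherian, so $M_B$ is of type $\mathbf{FP}_\infty$.

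The key computation uses the spectral sequence of Lemma \ref{lem:2p} with $N=A$:
$$E_2^{p,q}=\textup{Ext}^p_{H^{op}}(H_s,\textup{Ext}^q_{B^{op}}(M,A))\Longrightarrow \textup{Ext}^{p+q}_{A^{op}}(M,A).$$
By Lemma \ref{lem:4f}, $\textup{Ext}^q_{B^{op}}(M,A)\cong \textup{Ext}^q_{B^{op}}(M,B)\o_{H_t}H$ is a projective right $H$-module. I then want to compute $\textup{Ext}^p_{H^{op}}(H_s,V\o_{H_t}H)$ for a finite dimensional $H_t$-module $V$. Here $V=\textup{Ext}^q_{B^{op}}(M,B)$, which is finite dimensional because $B$ is AS-Gorenstein and $M_B$ is finite dimensional. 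To do this, I use Proposition \ref{pro:2m}(1) together with Corollary \ref{cor:4c}, i.e.\ the right-module analogue of Lemma \ref{lem:4b}(2). This identifies the term with $\textup{Ext}^p_{H^{op}}(H_s,H)\o_{H_t}V$, up to the twisted structure.

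Since $B$ is AS-Gorenstein of dimension $d_B$, $V$ vanishes unless $q=d_B$. The spectral sequence therefore collapses to a single row, giving
$$\textup{Ext}^{n}_{A^{op}}(M,A)\cong \textup{Ext}^{n-d_B}_{H^{op}}(H_s,\textup{Ext}^{d_B}_{B^{op}}(M,B)\o_{H_t}H).$$

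With this formula, the two parts of the hypothesis are checked as follows. For exactness, part (2): $\textup{Ext}^{d_B}_{B^{op}}(-,B)$ is exact on finite-dimensional modules, because the other Ext's vanish. The functor $-\o_{H_t}H$ is exact since $H_t$ is semisimple. Finally, $\textup{Ext}^p_{H^{op}}(H_s,-)$, restricted to modules of the form $V\o_{H_t}H\cong (H\widehat{\o}V')$-type, equals $\textup{Ext}^p_{H^{op}}(H_s,H)\o_{H_t}V$. This is exact in $V$ over the semisimple $H_t$. Composing these gives exactness of $\textup{Ext}^n_{A^{op}}(-,A)$ on finite length modules. For part (1): $\textup{Ext}^{d_B}_{B^{op}}(U,B)\neq0$ for every simple $U$ by the nonvanishing remark following the AS-Gorenstein definition. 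Faithfulness of $-\o_{H_t}H$ then gives nonvanishing in the right degree. The degrees in which $\textup{Ext}^n_{A^{op}}(U,A)\ne0$ are $d_B+\{p:\textup{Ext}^p_{H^{op}}(H_s,H)\o_{H_t}V\neq0\}$. I will argue that this set is independent of $U$, because it depends only on which $H_t$-isotypic components occur. If $H$ is AS-Gorenstein of dimension $d_H$, this set is exactly $\{d_H\}$, which gives $\textup{inj.dim}(A_A)=d_B+d_H$ via Theorem \ref{thm:4h}(1) and the equality $\textup{inj.dim}=\sup$ of such degrees.

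I expect the main obstacle to be the identification $\textup{Ext}^p_{H^{op}}(H_s,V\o_{H_t}H)\cong \textup{Ext}^p_{H^{op}}(H_s,H)\o_{H_t}V$, together with the independence of its nonvanishing from $V$. This requires $H_s$ to be of type $\mathbf{FP}_\infty$ over $H$, and care with the twisted action (\ref{4b}). I would first try to reduce to $V=H_t$-simple summands, using semisimplicity of $H_t$. I would then apply Lemma \ref{lem:4b}(2) with $U=H_s$, which is finite dimensional, so the finiteness hypothesis is automatic. After that, I need to check that each nonzero summand contributes in the same degrees. If this uniformity fails in general, I would fall back on the AS-Gorenstein assumption on $H$ for the second statement, and for the first statement only use that $\textup{Ext}^p_{H^{op}}(H_s,H)$ is a fixed module tensored with $V$.
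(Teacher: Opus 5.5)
\textbf{There is a gap in your verification of Hypothesis~\ref{hyp:4g}(1), and the statement itself fails without an extra hypothesis.}

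Because $V_U=\textup{Ext}^{d_B}_{B^{op}}(U,B)$ is finite dimensional and $H_t$ is semisimple, $V_U\otimes_{H_t}H$ is a finite direct sum of modules $eH$, with $e$ primitive idempotents of $H_t$. Moreover $\textup{Ext}^p_{H^{op}}(H_s,eH)=e\,\textup{Ext}^p_{H^{op}}(H_s,H)$. So your collapse formula says: $\textup{Ext}^{n}_{A^{op}}(U,A)\neq0$ if and only if $e\,\textup{Ext}^{n-d_B}_{H^{op}}(H_s,H)\neq0$ for some $e$ occurring in $V_U$.

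Your observation that this ``depends only on which $H_t$-isotypic components occur'' is exactly why independence of $U$ is \emph{not} automatic. The $H_t$-support of $V_U$ varies with $U$, and different $e$ can have different nonvanishing degrees. Your fallback does not address this, so condition (1) is never established.

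It cannot be established in general. Let $H=k\mathbb{Z}\times k\mathbb{Z}^2$, the groupoid algebra of a two-object groupoid with vertex groups $\mathbb{Z}$ and $\mathbb{Z}^2$ and no other arrows. Let it coact componentwise on $A=k[t^{\pm1}]\times k[s_1^{\pm1},s_2^{\pm1}]$ via the group gradings, and write $1_{A_1},1_{A_2}$ for the units of the two factors. Then:
\begin{itemize}
\item[(i)] $B=A^{coH}=k\times k$, and $\textup{can}$ is the direct sum of two classical Galois maps.
\item[(ii)] $A$ is faithfully flat over $B$.
\item[(iii)] With $e_1=(1,0)$, $e_2=(0,1)$ spanning $H_s$, we have $\kappa(e_i)=1_{A_i}\otimes_B1_{A_i}\in B\otimes_BB$, so the extension is restricted.
\item[(iv)] $A$ is noetherian, affine and commutative, and $B$ is semisimple, hence AS-Gorenstein of dimension $0$.
\end{itemize}
Yet $\textup{inj.dim}A=2$, while $U=k[t^{\pm1}]/(t-1)$ has $\textup{Ext}^1_{A}(U,A)\neq0$, so $A$ is not AS-Gorenstein. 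In your terms, $\textup{Ext}^0_{B^{op}}(U,A)\cong e_1H\cong k\mathbb{Z}$, and $\textup{Ext}^\bullet_{H^{op}}(H_s,e_1H)$ is concentrated in degree $1$. Simple modules of the second factor give degree $2$ instead.

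The paper's proof makes the same leap. It asserts
\[
\textup{Ext}^{i-d_B}_{H^{op}}(H_s,P)\neq0\;\Leftrightarrow\;\textup{Ext}^{i-d_B}_{H^{op}}(H_s,H)\neq0
\]
for the projective module $P=\textup{Ext}^{d_B}_{B^{op}}(U,A)$. This holds when $P$ is free, as in Zhu's Hopf case $H_t=k$, but not for $P=V_U\otimes_{H_t}H$. So your suspicion was right, and an extra hypothesis is needed. One sufficient condition is that $e\,\textup{Ext}^\bullet_{H^{op}}(H_s,H)$ has the same nonvanishing degrees for every primitive idempotent $e$ of $H_t$. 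Your second statement likewise relies on the unproved fact that $e\,\textup{Ext}^{d_H}_{H^{op}}(H_s,H)\neq0$ for the relevant $e$.

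Your treatment of condition (2) is fine. The paper's argument is shorter: a short exact sequence of projective $H$-modules splits, so $\textup{Ext}^p_{H^{op}}(H_s,-)$ keeps it exact. Also, your identification of $\textup{Ext}^p_{H^{op}}(H_s,V\otimes_{H_t}H)$ needs neither Corollary~\ref{cor:4c} nor $\mathbf{FP}_\infty$ for $H_s$; the direct-sum decomposition above gives it immediately.
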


\begin{proof}
If $A$ is an affine PI algebra, every simple $A$-module is finite dimensional. Suppose that $U$ is a simple right $A$-module, then $U$ is finite dimensional. Since $A$ is faithfully flat as a right $B$-module, we get that $B$ is right noetherian. 	Therefore $U$ is a $B$-module of type $\mathbf{FP_{\infty}}$. Hence we have an isomorphism $\textup{Ext}_{B^{op}}^i(U, A) \cong\textup{Ext}_{B^{op}}^i(U, B) \o_B A$ for all $i \geq 0$. 

If $B$ is AS-Gorenstein of dimension $d_B$, then $\textup{Ext}_{B^{op}}^i(U, B) = 0$ for $i \neq d_B$ and $\textup{Ext}_{B^{op}}^{d_B}(U, B)$ is non-zero and finite dimensional. Hence $\textup{Ext}_{B^{op}}^i(U, A) \neq 0$ if and only if $i = d_B$. By Lemma \ref{lem:2p} we derive that for all $i \geq 0$, 
$$\textup{Ext}_{A^{op}}^i(U, A) \cong \begin{cases}
		0, & i < d_B; \\
		\textup{Ext}_{H^{op}}^{i - d_B}(H_s, \textup{Ext}_{B^{op}}^{d_B}(U, A)), & i \geq d_B. \\
	\end{cases}$$
Using Lemma \ref{lem:4f}, $\textup{Ext}_{B^{op}}^{d_B}(U, A)$ is a projective $H$-module of rank less than dim$(\textup{Ext}_{B^{op}}^{d_B}(U, B))$. Hence we have the equivalences
$$\textup{Ext}_{A^{op}}^i(U, A)\neq0\Leftrightarrow \textup{Ext}_{H^{op}}^{i - d_B}(H_s, \textup{Ext}_{B^{op}}^{d_B}(U, A))\neq0\Leftrightarrow\textup{Ext}_{H^{op}}^{i - d_B}(H_s, H)\neq0.$$
Since for another simple $A$-module $V$,
$$\textup{Ext}_{A^{op}}^i(U, A)\neq0\Leftrightarrow\textup{Ext}_{H^{op}}^{i - d_B}(H_s, H)\neq0\Leftrightarrow\textup{Ext}_{A^{op}}^i(V, A)\neq0,$$
the condition (1) of Hypothesis \ref{hyp:4g} is fulfilled.

Consider an exact sequence $0\rightarrow W'\rightarrow W \rightarrow W'' \rightarrow 0$ of finite length left $A$-modules.
	If $B$ is AS-Groenstein of dimension $d_B$, then we obtain a short exact sequence
	\begin{equation*}
		0\rightarrow \textup{Ext}_{B^{op}}^{d_B}(W'', A)\rightarrow \textup{Ext}_{B^{op}}^{d_B}(W, A)\rightarrow \textup{Ext}_{B^{op}}^{d_B}(W', A)\rightarrow 0
	\end{equation*}
	in $\mathcal{M}^H_H$. By Lemma \ref{lem:4f}, each term is projective as right $H$-modules such that the above exact sequence is split, which means that the following sequence is still exact
$$0\rightarrow\textup{Ext}_{H^{op}}^{i - d_B}(H_s, \textup{Ext}_{B^{op}}^{d_B}(W'', A))\rightarrow\textup{Ext}_{H^{op}}^{i - d_B}(H_s, \textup{Ext}_{B^{op}}^{d_B}(W, A))\rightarrow\textup{Ext}_{H^{op}}^{i - d_B}(H_s, \textup{Ext}_{B^{op}}^{d_B}(W', A))\rightarrow0,$$
therefore we have the following exact sequence
$$0\rightarrow \textup{Ext}_{A^{op}}^{d_A}(W'', A)\rightarrow  \textup{Ext}_{A^{op}}^{d_A}(W, A)\rightarrow  \textup{Ext}_{A^{op}}^{d_A}(W', A)\rightarrow0,$$
that is, the condition (2) of Hypothesis \ref{hyp:4g} is satisfied. Finally by Theorem \ref{thm:4h}, the proof is completed.
\end{proof} 

\section*{Acknowledgement}

This work was supported by the NNSF of China (Nos. 12271292, 11901240).


\end{document}